\documentclass[a4paper,reqno,11pt]{amsart}
\usepackage{geometry}
\geometry{a4paper,left=20mm,right=20mm, top=30mm, bottom=30mm} 
\usepackage{mathrsfs}
\usepackage{amsmath} \numberwithin{equation}{section}
\usepackage{amsthm}
\usepackage{amsfonts}
\usepackage{amssymb}
\usepackage{cases}
\usepackage{enumerate}
\usepackage[english]{babel}
\usepackage{graphics}

% theorems
%\newtheorem{satz}{Theorem}
%\renewcommand{\thesatz}{\Alph{satz}}
\theoremstyle{plain}
\newtheorem{theorem}{Theorem}[section]
\newtheorem{lemma}[theorem]{Lemma}
\newtheorem{corollary}[theorem]{Corollary}

\theoremstyle{definition}
\newtheorem{definition}[theorem]{Definition}
\newtheorem{example}[theorem]{Example}
\newtheorem{remark}[theorem]{Remark}

% proofs

% math
  % 'mit'-Symbol in Mengen
 % Absolutbetrag
\newcommand\C{\mathbb C}         % K\UTF{FFFD}rper der komplexen Zahlen
\newcommand\R{\mathbb R}         % K\UTF{FFFD}rper der reellen Zahlen
\newcommand\Z{\mathbb Z}         % Ring der ganzen Zahlen
\newcommand\N{\mathbb N}         % Ring der ganzen Zahlen
\newcommand{\uhp}{{\mathbb H}}
\newcommand{\Nev}{{\mathcal N}}
\newcommand{\G}{{\mathscr G}}      
\newcommand\D{\mathbb D}  
\newcommand\T{\mathbb T}

\renewcommand\Im{\operatorname{Im}}
\renewcommand\Re{\operatorname{Re}}

\newcommand{\cP}{{\rm\bf P}}  % Probability measures
\newcommand{\aand}{{\quad\text{and}\quad}}

% other
\newcommand{\eps}{\varepsilon}

\newcommand{\sphere}{{\widehat{\mathbb C}}}

%\usepackage[pdfstartview=FitH]{hyperref}

%\allowdisplaybreaks[1]

\usepackage[dvipdfmx]{graphicx}

\title{Nonlinear resolvents and decreasing Loewner chains}
\author[I. Hotta]{Ikkei Hotta}
\author[S. Schlei{\ss}inger]{Sebastian Schlei{\ss}inger}
\author[T. Sugawa]{Toshiyuki Sugawa}
\subjclass[2020]{Primary 37L05, Secondary 30C45, 46L54}
\keywords{nonlinear resolvents, semigroups, decreasing Loewner chains, free convolution}
\thanks{The first author was supported by JSPS KAKENHI Grant no.20K03632}
\thanks{The second author was supported by the German Research Foundation (DFG), project no.401281084}
\thanks{The third author was supported by JSPS KAKENHI Grant no.JP17H02847}
\thanks{This version of the article has been accepted for publication, and is subject to Springer Nature's AM terms of use, but is not the Version of Record and does not reflect post-acceptance improvements, or any corrections. The Version of Record is available online at: http://dx.doi.org/ 10.1007/s12220-023-01544-y}

\begin{document}
\parindent 0pt

\maketitle

\begin{center}\textit{Dedicated  to  the  memory  of  Gabriela Kohr.}\end{center}

\vspace{5mm}

 \begin{abstract}
In this article we prove that nonlinear resolvents of infinitesimal generators on bounded and
convex subdomains of $\C^n$ are decreasing Loewner chains. Furthermore, we consider the problem of the existence of nonlinear resolvents on unbounded convex domains in $\C$. In the case of the upper half-plane, we obtain a complete solution by using that nonlinear resolvents of certain generators correspond to semigroups of probability measures with respect to free convolution.
 \end{abstract}

 \tableofcontents

\section{Introduction}

Let $D\subset \C^n$ be a complete (Kobayashi) hyperbolic domain. The (decreasing) Loewner partial differential equation on $D$ has the form
\begin{equation}\label{intro_1_eq}
 \frac{\partial}{\partial t}f_t(z)=f'_t(z)\cdot G(t,z)\quad \text{for a.e.\ $t\geq 0$}, \quad f_0(z)=z\in D,
\end{equation}
where a mapping $f_t(z):D\to D$ is locally absolutely continuous in $t\ge0$ and
holomorphic in $z\in D$ and
$f'_t(z)$ denotes the holomorphic Jacobian matrix of $f_t(z)$ with respect to $z$ and $G(t,z)$ is a Herglotz vector field on $D$, i.e.\ $z\mapsto G(t,z)$ is an infinitesimal generator of a semigroup on $D$
for almost every $t\geq0$ and $(t,z)\mapsto G(t,z)$ satisfies certain regularity conditions.\\

 Equation \eqref{intro_1} has a unique solution of univalent mappings $f_t:D \to D$, which form a decreasing Loewner chain, i.e.\ $f_t(D)\subset f_s(D)$ whenever $s\leq t$, $f_0$ is the identity, and $t\mapsto f_t$ is continuous with respect to locally uniform convergence. This result is well-known, but as the literature usually focuses on increasing Loewner chains, we also provide a proof (Theorem \ref{thm_1}).

Now let us replace $G(t,z)$ by $G(t,f_t(z))$ to obtain the following modified equation:
\begin{equation}\label{intro_2_eq}
 \frac{\partial}{\partial t}f_t(z)=f'_t(z)\cdot G(t,f_t(z))\quad \text{for a.e.\ $t\geq 0$}, \quad f_0(z)=z\in D.
\end{equation}

At first sight, it seems unnatural to look at this equation, for, if $G$ is a generator on $D$ and $\varphi:D\to D$ is a holomorphic self-mapping, then $G\circ \varphi$ need not be a generator.
On the other hand, equation \eqref{intro_2_eq} is simpler than \eqref{intro_1_eq} in the following sense. 
While the inverse functions $g_t=f_t^{-1}$ of the solution to \eqref{intro_1_eq} satisfy Loewner's ordinary differential equation 
\begin{equation*}
\frac{\partial}{\partial t}g_t(z) = - G(t,g_t(z)),
\end{equation*}
 the inverse functions $g_t$ of the solution to \eqref{intro_2_eq}, if it exists, 
satisfy \[\frac{\partial}{\partial t}g_t(z) = - G(t,z),\quad \text{i.e.} \quad  g_t(z)=z-\int_0^t G(s,z)ds.\]

Note that we can define $g_t$ by this equation on the whole domain $D$.
Special cases of \eqref{intro_2_eq} appear in different contexts: theory of continuous semigroups of holomorphic functions (\cite{ESS20} and \cite{GKH}), limit of the radial/chordal multiple SLEs (\cite{dMS16}, \cite{dMHS18}, \cite{HS}) and semigroups of probability measures with respect to free convolution (\cite{HS}).
In this article we explain these relations in a more general setting.\\

The initial value problem \eqref{intro_1_eq} has a unique solution, which consists of univalent functions forming a decreasing Loewner chain (see Theorem \ref{thm_1}).
In the simplest case, $G(t,z)$ does not depend on $t$, i.e.\ $G(t,z)=G(z)$ for an infinitesimal generator $G$ on $D$.
Let $s\geq0$. Then $(f_{t+s}(z))_{t\geq0}$ and $((f_s\circ f_t)(z))_{t\geq0}$ both satisfy \eqref{intro_1_eq} with initial value $f_s(z)$ at $t=0$. Applying $f_s^{-1}$ yields that $(f_s^{-1}(f_{t+s}(z)))_{t\geq0}$ and $(f_t(z))_{t\geq0}$ both satisfy \eqref{intro_1_eq} with initial value $z$ at $t=0$. So $f_{t+s}=f_s\circ f_t$ in this case. As $\left.\frac{\partial f_t(z)}{\partial t}\right\vert_{t=0} = G(z)$, the solution to \eqref{intro_1_eq} is then simply the semigroup associated to $G$.
 (We note that some authors call $-G$ the infinitesimal generator of the semigroup.)\\

By approximating the general version of \eqref{intro_1_eq} by a piecewise constant Herglotz vector field, we can think of the solution as an infinite composition of 
semigroup mappings which are infinitesimally close to the identity.

Equation \eqref{intro_2_eq}, on the other hand, is related to the nonlinear resolvents of $G$. 
Assume that $D$ is bounded and convex. It is well-known that the nonlinear resolvent equation (at time $t$)
\[w= z - tG(z)\] has a solution $z=f_t(w)$ for every $w\in D$ and every $t\geq 0$ so that $f_t$ is a univalent holomorphic self-map of $D.$ See Section \ref{sec_semi} for details.

\begin{theorem}\label{intro_1} Let $D\subset \C^n$ be a domain which is bounded and convex. 
 Let $G$ be an infinitesimal generator on $D$.
Then there exists a unique solution $(f_t)$ to
\begin{equation}\label{auto} \frac{\partial}{\partial t}f_t(z)=f'_t(z)\cdot G(f_t(z)) \quad \text{for all $t\geq 0$}, \quad f_0(z)=z\in D.
\end{equation}
The mappings $f_t$ are the nonlinear resolvents of $G$ and they form a decreasing Loewner chain.
\end{theorem}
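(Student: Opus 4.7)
My plan is to identify the unique solution explicitly as $f_t = J_t^{-1}$, where $J_t(z) := z - tG(z)$, and then to deduce all three claims (PDE, uniqueness, decreasing Loewner chain) from direct computations with this formula together with convexity of $D$. Section \ref{sec_semi} provides the key input: on a bounded convex $D$ and for an infinitesimal generator $G$, the map $J_t$ has a univalent holomorphic right inverse $f_t : D \to D$ for every $t \geq 0$, and this $f_t$ is by definition the nonlinear resolvent.

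To verify \eqref{auto}, I would differentiate the defining identity $f_t(w) - tG(f_t(w)) = w$ separately in $w$ and in $t$. The $w$-derivative gives $f'_t(w) = (I - tG'(f_t(w)))^{-1}$, where invertibility is automatic because $f_t$ is locally biholomorphic. The $t$-derivative then collapses to $\partial_t f_t(w) = (I - tG'(f_t(w)))^{-1} G(f_t(w)) = f'_t(w) \cdot G(f_t(w))$, which is \eqref{auto}; the initial condition $f_0 = \mathrm{id}$ is immediate from $J_0 = \mathrm{id}$. For uniqueness I would argue as in the introduction: if $(\tilde f_t)$ is any solution and $\tilde g_t = \tilde f_t^{-1}$, then differentiating $\tilde f_t(\tilde g_t(z)) = z$ in $t$ and invoking \eqref{auto} yields $\partial_t \tilde g_t(z) = -G(z)$, so $\tilde g_t(z) = z - tG(z) = J_t(z)$, which forces $\tilde f_t = J_t^{-1} = f_t$.

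The decreasing Loewner chain property is where convexity of $D$ enters decisively. The defining relation rewrites $G(f_t(w))$ as $(f_t(w) - w)/t$, so for $0 < s \leq t$,
\[
f_s^{-1}(f_t(w)) = f_t(w) - sG(f_t(w)) = \left(1 - \tfrac{s}{t}\right) f_t(w) + \tfrac{s}{t}\, w,
\]
an explicit convex combination of two points of $D$. Convexity immediately places $f_s^{-1} \circ f_t$ into $D$, giving $f_t(D) \subset f_s(D)$, and continuity of $t \mapsto f_t$ in the locally uniform topology follows from the parameter-continuity of the nonlinear resolvents. I do not expect a serious obstacle: the heavy lifting (well-posedness and univalence of the resolvents) has already been done in Section \ref{sec_semi}, and what remains is a chain-rule computation plus a one-line convexity argument. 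The only subtle point is the inversion of $I - tG'(f_t(w))$, which is automatic from $f_t$ being locally biholomorphic.
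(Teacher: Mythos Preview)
Your convexity argument for $f_t(D)\subset f_s(D)$ is correct and genuinely different from the paper's route. The paper does not compute the convex combination $(1-s/t)f_t(w)+(s/t)w$; instead it observes (Lemma~\ref{rm_1}) that $G\circ f_t\in\G(D)$, so that $(t,z)\mapsto G(f_t(z))$ is a Herglotz vector field, and then invokes the general Loewner PDE machinery (Theorem~\ref{thm_1}) to conclude that the solution is a decreasing Loewner chain. Your approach is more elementary, but you should write $J_s(f_t(w))$ rather than $f_s^{-1}(f_t(w))$ in the displayed identity, since $f_s^{-1}$ is a priori only defined on $f_s(D)$; the passage from ``$J_s(f_t(w))\in D$'' to ``$f_t(w)\in f_s(D)$'' then uses the uniqueness of the resolvent solution in $D$ recorded before Theorem~\ref{thm:RS}. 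The paper's route buys uniformity: the same Herglotz/Theorem~\ref{thm_1} argument also drives the proof of Theorem~\ref{intro_1b}, where no convex-combination identity is available.

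Your uniqueness argument has a genuine gap: you assume any competitor $(\tilde f_t)$ is globally invertible in order to form $\tilde g_t=\tilde f_t^{-1}$, but the theorem does not postulate univalence of solutions. Near $t=0$ the map $\tilde f_t$ is close to the identity and your computation works locally, but extending to all $t\ge0$ and all of $D$ is not automatic. The paper (Theorem~\ref{result1}(2)) instead solves the auxiliary ODE $\partial_t\varphi_t(z)=-G(\tilde f_t(\varphi_t(z)))$, $\varphi_0=\mathrm{id}$, observes that $\tilde f_t(\varphi_t(z))\equiv z$ forces $\varphi_t(z)=z-tG(z)$, and then carefully tracks the sets $D_t=\{z\in D:\varphi_t(z)\in D\}$ to rule out a finite breakdown time and to conclude $\tilde f_t$ is the resolvent without assuming invertibility in advance. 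Finally, your $t$-differentiation of the resolvent identity is correct once you justify that $t\mapsto f_t(w)$ is differentiable; this follows from the implicit function theorem via the invertibility of $I-tG'(f_t(w))$, whereas the paper gives a direct difference-quotient argument.
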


\begin{definition}\label{def_dif}Let $D\subset \C^n$ be a complete hyperbolic domain. 
 A \textit{differential Herglotz vector field} on $D$ is a mapping $G:[0,\infty)\times D \to \C^n$ with the following properties:
\begin{itemize}
 \item[(1)] The function $t\mapsto G(t, z)$ is measurable on $[0,\infty)$ for all $z\in D$.
\item[(2)] For any compact set $K\subset D$ and $T>0,$ there exists $C_{T,K}>0$ such that $\|G(t,z)\|\leq C_{T,K}$ for all $z\in K$ and for almost all $t\in[0,T]$. 
\item[(3)] $\int_0^t G(s,z) ds$ is an infinitesimal generator on $D$ for every $t\geq 0$.
\end{itemize}
\end{definition}

\begin{theorem}\label{intro_1b} Let $D\subset \C^n$ be a domain which is bounded and convex. 
Let $G:[0,\infty)\times D\to \C^n$ be a differential Herglotz vector field on $D$ and let $K_t:D\to D$ be the nonlinear resolvent of $\int_0^t G(s,z) ds$ at time $1$, i.e.\ the inverse of $z\mapsto z- \int_0^t G(s,z) ds$. Then $t\mapsto K_t(z)$ is locally absolutely continuous for every $z\in D$, $t\mapsto K_t$ is continuous with respect to locally uniform convergence and there exists a set $N\subset[0,\infty)$ of Lebesgue measure $0$ such that
\begin{equation}\label{m2_intro}\frac{\partial}{\partial t}K_t(z)=K_t'(z)\cdot G(t,K_t(z)) \quad \text{for all \ $t\in[0,\infty)\setminus N$ and all $z\in D$}, \quad K_0(z)= z.\end{equation}
Let $(f_t)_{t\geq 0}$ be a family of holomorphic mappings $f_t:D\to D$ satisfying \eqref{m2_intro}, such that $t\mapsto f_t(z)$ is locally absolutely continuous for every $z\in D$ and $t\mapsto f_t$ is continuous with respect to locally uniformly convergence. Then $f_t=K_t$ for all $t\geq 0$.
\end{theorem}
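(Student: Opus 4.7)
Set $\Phi_t(w):=\int_0^t G(s,w)\,ds$ and $g_t(w):=w-\Phi_t(w)$; by hypothesis~(3), $\Phi_t$ is an infinitesimal generator on $D$, so by Theorem~\ref{intro_1} (and the theory recalled in Section~\ref{sec_semi}) the resolvent $K_t=g_t^{-1}\colon D\to D$ exists as a univalent holomorphic self-map satisfying the fixed-point identity $K_t(z)=z+\Phi_t(K_t(z))$. The plan has three stages: to transfer the regularity of $\Phi_t$ in $t$ to $K_t$; to derive the PDE by implicit differentiation of this identity; and to prove uniqueness through a Gronwall--Cauchy iteration on nested compacts.

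Hypothesis~(2) gives $\|\Phi_t(w)-\Phi_s(w)\|\le C_{T,K}|t-s|$ uniformly for $w$ in a compact $K\subset D$ and $t,s\in[0,T]$, hence $g_t\to g_s$ locally uniformly as $t\to s$; since each $g_t$ is univalent with inverse $K_t$ valued in the bounded set $D$, a normal-families argument propagates this to $K_t\to K_s$ locally uniformly, and the fixed-point identity combined with the invertibility of $I-\Phi_t'$ yields local absolute continuity of $t\mapsto K_t(z)$. For the PDE, differentiating $g_t(K_t(z))=z$ in $z$ gives $K_t'(z)=[I-\Phi_t'(K_t(z))]^{-1}=g_t'(K_t(z))^{-1}$. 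To differentiate in $t$, I need the identity $\partial_t\Phi_t(w)=G(t,w)$ to hold for $t$ off a single null set $N$ independent of $w$: Lebesgue differentiation at the points of a countable dense subset $\{w_i\}\subset D$ gives this off $N:=\bigcup_i N_{w_i}$, and Cauchy estimates applied to the locally uniformly bounded holomorphic family $\{h^{-1}(\Phi_{t+h}-\Phi_t)\}_h$ promote pointwise convergence at the $w_i$ to locally uniform convergence in $w$, so $G(t,\cdot)$ may be taken to agree off $N$ with the holomorphic limit. For $t\notin N$ the chain rule applied to $g_t(K_t(z))=z$ yields $-G(t,K_t(z))+g_t'(K_t(z))\,\partial_t K_t(z)=0$, and applying $K_t'(z)=g_t'(K_t(z))^{-1}$ produces~\eqref{m2_intro}.

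For uniqueness, let $(f_t)$ satisfy~\eqref{m2_intro} with the stated regularity and set $\psi_t(z):=g_t(f_t(z))-z$; the chain rule combined with the PDE for $f_t$ gives $\partial_t\psi_t(z)=\psi_t'(z)\cdot G(t,f_t(z))$ with $\psi_0\equiv 0$. The main obstacle is that this transport-type equation loses a derivative on the right-hand side, so standard Banach-space ODE uniqueness does not apply directly; I plan to close the argument by exploiting holomorphy. Fix $T>0$ and a compact $K_0\subset D$, and pick nested compacts $K_0\subset K_1\subset\cdots\subset K_N\subset D$ with uniform separation $r=\dist(K_0,\partial D)/(N+1)$. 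The set $\bigcup_{t\in[0,T]}f_t(K_N)$ is compactly contained in $D$, so $G(t,f_t(z))$ is uniformly bounded there (by hypothesis~(2)); together with the Cauchy estimate $\|\psi_s'\|_{K_n}\le r^{-1}\|\psi_s\|_{K_{n+1}}$, this gives the integral inequality $\|\psi_t\|_{K_n}\le Cr^{-1}\int_0^t\|\psi_s\|_{K_{n+1}}\,ds$ with $C$ depending only on $T$ and $K_N$. Iterating through the chain and invoking the uniform bound $\|\psi_s\|_{K_N}\le M$, one obtains $\|\psi_t\|_{K_0}\le M(CT/r)^N/N!$; substituting $r=\dist(K_0,\partial D)/(N+1)$ and applying Stirling gives a bound of order $M(eCT/\dist(K_0,\partial D))^N/\sqrt{N}$, which tends to $0$ as $N\to\infty$ once $T<\dist(K_0,\partial D)/(eC)$. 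For larger $T$ a standard time-step continuation (using $f_{T_0}=K_{T_0}$ as new initial data at each step) extends the conclusion; since $K_0\subset D$ was arbitrary and $\psi_t$ is holomorphic, $g_t\circ f_t=\mathrm{id}$ on $D$, so $f_t=K_t$.
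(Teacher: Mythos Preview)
Your first two stages---transferring regularity from $\Phi_t$ to $K_t$ via normal families and the fixed-point identity, and deriving the PDE by implicit differentiation together with a countable-dense-set argument to obtain a single null set $N$---follow essentially the same route as the paper's proof (Theorem~\ref{result1}(3), Steps 1 and 2).

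Your uniqueness argument is genuinely different. The paper observes that since $f_0=I$ and $t\mapsto f_t$ is continuous in the locally uniform topology, $f_t$ is injective on a small ball $B_0$ for $t\in[0,\eps]$; it then computes the local inverse directly, shows it satisfies $\partial_t g_t=-G(t,\cdot)$ and hence coincides with $w\mapsto w-\Phi_t(w)$, and concludes $f_t=K_t$ on an open set, extending by the identity theorem and a time-continuation. Your Ovsyannikov/Nishida-type iteration on $\psi_t=g_t\circ f_t-I$ is a legitimate alternative strategy, but as written it has a gap: with $K_0$ fixed and separation $r=\dist(K_0,\partial D)/(N+1)$, the outer compact $K_N$ approaches $\partial D$ as $N\to\infty$. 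Consequently the bound $C$ on $\|G(t,f_t(\cdot))\|_{K_N}$ (you yourself note that $C$ depends on $K_N$), and likewise $M=\sup_s\|\psi_s\|_{K_N}$, depend on $N$ and can blow up---for instance when $G(t,\cdot)$ is unbounded near $\partial D$, as generators on $\D$ typically are. In that case $(eC_NT/d)^N/\sqrt{N}$ need not tend to $0$, and the conclusion fails.

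The repair is standard for this kind of scale-of-Banach-spaces argument: fix the \emph{outer} compact $K^*$ once and for all (with nonempty interior) and shrink inward, taking $K_n=\{z\in K^*:\dist(z,\partial K^*)\ge (N-n)\rho/N\}$ for some fixed $\rho>0$. Then $C$ and $M$ are independent of $N$, the iteration yields $\psi_t\equiv 0$ on an open subset of $K^*$ for $t<\rho/(eC)$, and analytic continuation plus time-stepping complete the proof. With this correction your approach works; the paper's local-inversion argument is shorter and sidesteps the Cauchy--Kowalevski machinery altogether.
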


In contrast to equation \eqref{auto}, the solution in Theorem \ref{intro_1b} is not necessarily a decreasing Loewner chain, see Example \ref{no_solution}.

%If $(t,z)\mapsto G(t,J_t(z))$ is a Herglotz vector field, then $(J_t)_{t\geq 0}$ is a decreasing Loewner chain.
%
%\begin{question}Under which conditions on $G(t,z)$ is $G(t,J_t(z))$ a Herglotz vector field?
%\end{question}
%
%If the solution to \eqref{intro_2_eq} is a decreasing Loewner chain, we can think of it as an infinite composition of 
%nonlinear resolvent mappings which are infinitesimally close to the identity.

Convexity of $D$ is a natural assumption for the resolvent equation. 
On the upper half-plane $\uhp$ -- which is unbounded, convex, and still complete hyperbolic -- 
the resolvent equation does not have solutions for all cases, see Example \ref{ex:2t}. The following theorem shows that this is rather an exceptional case.

\begin{theorem}\label{thm:unbdd}
Let $D$ be an unbounded convex domain in $\C$ with $D\ne\C$ and
$G:D\to\C$ be an infinitesimal generator of a semigroup $(F_t)$ on $D.$
If the Denjoy-Wolff point $\sigma$ of $(F_t)$ is finite, then
$G$ admits nonlinear resolvents $J_t$ on $D$ for all $t\ge 0.$
\end{theorem}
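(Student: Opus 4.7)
The plan is to prove existence of the nonlinear resolvent $J_t(w)$ for each $w\in D$ and $t\ge 0$ by a continuity-in-$t$ argument. Fix $w\in D$ and define
\[
S_w=\{\,t\ge 0\mid \exists\, z\in D\ \text{with}\ z-tG(z)=w\,\}.
\]
Trivially $0\in S_w$, so it suffices to show $S_w$ is both open and closed in $[0,\infty)$; then $S_w=[0,\infty)$, which is precisely the existence of $J_t$ on $D$ for every $t\ge 0$.

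Openness is direct: by the material recalled in Section~\ref{sec_semi}, for any generator $G$ on a convex $D$ the map $\psi_t(z):=z-tG(z)$ is univalent (the proof uses only convexity, not boundedness, of $D$), so $\psi_t'(z)=1-tG'(z)$ is nowhere zero on $D$. The implicit function theorem then gives local solvability of $\psi_t(z)=w$ in $t$ around any $t_0\in S_w$, and univalence of $\psi_t$ ensures the local branches glue into a branch of $J_{(\cdot)}(w)$ on a neighbourhood of $t_0$.

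Closedness is the heart of the matter and is where the assumption on $\sigma$ enters. Given $t_n\to t_*\in[0,\infty)$ and $z_n\in D$ with $\psi_{t_n}(z_n)=w$, one must extract a subsequence of $(z_n)$ converging to some $z_*\in D$. The two obstructions are $z_n\to\partial D$ in $\C$ and $z_n\to\infty$. Escape to $\partial D$ is ruled out by the standard inward-pointing behaviour of a generator on a convex domain: $\psi_t$ pushes $z$ out of $\overline D$ as $z$ approaches $\partial D$, so the preimage of an interior point cannot accumulate on $\partial D$. To prevent escape to $\infty$ one invokes a Berkson--Porta-type factorization $G(z)=(\sigma-z)^k Q(z)$, with $k=1$ if $\sigma\in D$ and $k=2$ if $\sigma\in\partial D$, where $Q$ satisfies a one-sided half-plane constraint coming from the dissipativity of $G$. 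Substituting into $z_n-w=t_n(z_n-\sigma)^k Q(z_n)$ and letting $|z_n|\to\infty$ forces the left-hand side, divided by $(z_n-\sigma)^k$, into a limit that is incompatible with the constrained range of $t_nQ(z_n)$, producing the required contradiction. The main obstacle is to make this factorization and sign condition precise on an \emph{arbitrary} unbounded convex $D\ne\C$; for $D=\uhp$ it is the classical Berkson--Porta theorem transferred via the Cayley map from $\D$, and the general case can be handled by supporting $D$ with affine half-planes whose boundaries pass through $\sigma$ (when $\sigma\in\partial D$) or by translating so that $\sigma=0$ (when $\sigma\in D$) and applying convex-geometric estimates.
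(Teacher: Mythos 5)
There is a genuine gap, and it sits exactly where you place the ``heart of the matter.'' Your closedness step hinges on a ``Berkson--Porta-type factorization $G(z)=(\sigma-z)^kQ(z)$ with $Q$ satisfying a one-sided half-plane constraint'' on an arbitrary unbounded convex $D$. No such statement is available, and as formulated it is not even true in the model cases: on $\uhp$ with interior Denjoy--Wolff point $\sigma$, Lemma~\ref{H_gen} gives $G(z)=(z-\sigma)(z-\overline{\sigma})q(z)$ with $q$ a Pick function, so the cofactor $Q(z)=(z-\overline{\sigma})q(z)$ of $(z-\sigma)$ has no half-plane-constrained range; transferring the disk formula \eqref{Berkson-Porta} to a general convex $D$ via a Riemann map introduces the factor $\varphi'$, which destroys any clean sign condition on $G(z)/(\sigma-z)^k$. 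Consequently the asserted ``incompatibility'' as $|z_n|\to\infty$ is not derived, and this is the only place where the finiteness of $\sigma$ would enter, so the theorem's hypothesis is never actually used in a verifiable way. The openness step also rests on an unproven claim: that $\psi_t(z)=z-tG(z)$ is univalent on any convex domain, ``boundedness not needed.'' The uniqueness cited from \cite{RS97} is proved for bounded convex domains, and Example~\ref{ex:2t} ($G(z)=z$ on $\uhp$, $t=1$, where $\psi_1\equiv 0$) shows the unqualified claim is false; under the finite Denjoy--Wolff hypothesis it may hold, but that requires proof, and without injectivity of $\psi_t$ your pointwise set $S_w$ does not assemble into a single-valued holomorphic self-map $J_t:D\to D$, which is what the definition of a nonlinear resolvent demands.

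For contrast, the paper sidesteps both difficulties by reducing to the bounded convex case: it exhausts $D$ by $F_t$-invariant bounded convex subdomains, namely images under a Riemann map $\varphi$ of hyperbolic disks $\Delta(\tau,\rho)$ when $\sigma\in D$ and of horocycles $E(\tau,R)$ when $\sigma\in\partial D$ (invariance from Lemma~\ref{lem:SPDW}, convexity from Lemma~\ref{lem:convex}, and boundedness of $\varphi(E(\tau,R))$ precisely because $\sigma=\varphi(\tau)$ is finite). On each such subdomain Theorem~\ref{thm:RS} gives existence and uniqueness of the resolvents, and uniqueness lets them glue over the exhaustion. If you want to rescue the continuity method, you would need, at minimum, a proof of injectivity of $I-tG$ under the finite Denjoy--Wolff hypothesis and a quantitative no-escape-to-infinity estimate replacing the claimed factorization; the invariant-subdomain idea is the natural source of both.
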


A definition of the Denjoy-Wolff point will be given in Section 2.
If the Denjoy-Wolff point is $\infty$, the question concerning the existence of nonlinear resolvents is more complicated. 
In Section \ref{sec_strip} we give a sufficient condition for generators on a horizontal strip having Denjoy-Wolff point $\infty$.
For the case $D=\uhp$, we obtain a complete characterization.

\begin{theorem}\label{intro_2}Let $G:\uhp\to\C$ be an infinitesimal generator on $\uhp$.
\begin{enumerate}

\item[(1)] Assume that $G(\uhp)\subset \uhp\cup \R$. (This is equivalent to either $G(z)\equiv 0$ or the associated semigroup has Denjoy-Wolff point $\infty$.) Let $b=\lim_{y\to\infty}G(iy)/(iy)$. Then
$b$ is a non-negative real number and the resolvent equation for $G$ can be solved if and only if $t\in[0,1/b)$.

If $b=0$, then the resolvents $f_t$ satisfy \[\frac1{f_t(z)}=\int_\R \frac1{z-x}\mu_t(dx),\] where $(\mu_t)_{t\geq0}$ is a semigroup of probability measures on $\R$ with respect to additive free convolution.
\item[(2)] In all other cases, the nonlinear resolvent of $G$ exists for every $t\ge0.$
\end{enumerate}
\end{theorem}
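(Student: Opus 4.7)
The plan is to dispatch part (2) first: since case (1) covers exactly the possibilities $G\equiv 0$ and Denjoy-Wolff point at $\infty$, the remaining cases have finite Denjoy-Wolff point, so Theorem \ref{thm:unbdd} directly supplies the nonlinear resolvent for every $t\ge 0$. For part (1), I would invoke the classification of generators on $\uhp$ (from the earlier sections, via a Berkson-Porta-type representation) to confirm the parenthetical equivalence and to dispose of the trivial case $G\equiv 0$ (in which $f_t=\mathrm{id}$ and $\mu_t=\delta_0$). In the non-trivial case, $G$ maps $\uhp$ into $\uhp$ by the open mapping theorem, so $G$ is in the Nevanlinna-Pick class and admits the representation
\[
G(z) = a + bz + \int_\R \frac{1+xz}{x-z}\,\rho(dx), \qquad a\in\R,\ b\ge 0,\ \rho\text{ a finite nonnegative Borel measure},
\]
with $b=\lim_{y\to\infty} G(iy)/(iy)$ by dominated convergence; in particular $b\ge 0$, as asserted.

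For the necessity of $t\in[0,1/b)$ when $b>0$, suppose a resolvent $f_t:\uhp\to\uhp$ exists. Then for any $w\in\uhp$ and $z:=f_t(w)\in\uhp$, the equation $z-tG(z)=w\in\uhp$ forces $\Im z > t\,\Im G(z)$. Since the integral term in the Nevanlinna representation has nonnegative imaginary part, $\Im G(z)\ge b\,\Im z$, and hence $\Im z > tb\,\Im z$, so $tb<1$. For sufficiency when $b=0$, the representation of $-G$ is exactly the free L\'evy-Khintchine form, so by the Bercovici-Voiculescu theorem $-G=\phi_{\mu_1}$ for a unique $\boxplus$-infinitely divisible probability measure $\mu_1$ on $\R$, whose free-convolution semigroup $(\mu_t)_{t\ge0}$ satisfies $\phi_{\mu_t}=-tG$ globally on $\uhp$. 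The reciprocal Cauchy transform $F_{\mu_t}=1/G_{\mu_t}$ is a self-map of $\uhp$ satisfying $F_{\mu_t}^{-1}(z)=z+\phi_{\mu_t}(z)=z-tG(z)$, so $f_t:=F_{\mu_t}$ is the resolvent and $1/f_t=G_{\mu_t}=\int_\R(z-x)^{-1}\mu_t(dx)$, yielding the claimed formula.

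For the remaining case $b>0$ with $tb<1$, I reduce to the $b=0$ step by rescaling. Writing $G(z)=bz+\tilde G(z)$, the auxiliary function $\tilde G$ lies in the Nevanlinna-Pick class with vanishing linear coefficient, so the Bercovici-Voiculescu construction of the previous step applies to $\tilde G$ (note that $\tilde G$ need not itself be a generator; only its Nevanlinna-Pick structure is used). A short calculation gives $z-tG(z)=(1-tb)\bigl(z-s\tilde G(z)\bigr)$ with $s:=t/(1-tb)\in[0,\infty)$, so for any $w\in\uhp$ the equation $z-tG(z)=w$ is equivalent to $z-s\tilde G(z)=w/(1-tb)$, which therefore has a unique solution $z\in\uhp$; this defines $f_t(w)$. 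The main obstacle is aligning carefully the conventions between the Nevanlinna representation of generators and the Bercovici-Voiculescu correspondence, in particular verifying that the reciprocal Cauchy transforms are globally defined right inverses of $z\mapsto z+\phi_{\mu_t}(z)$ on all of $\uhp$ (rather than only near $\infty$, where the Voiculescu transform is classically defined), and that the $b>0\to b=0$ reduction indeed works for a $\tilde G$ not assumed to be a generator.
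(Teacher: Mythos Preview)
Your proposal is correct and follows essentially the same route as the paper: part (2) via Theorem~\ref{thm:unbdd}, and part (1) via the Nevanlinna representation, the Bercovici--Voiculescu characterization (Theorem~\ref{thmBV93}) to identify $-G$ with a Voiculescu transform when $b=0$, and the rescaling $z-tG(z)=(1-tb)\bigl(z-\tfrac{t}{1-tb}\tilde G(z)\bigr)$ when $b>0$. Your two stated worries dissolve in the paper's framework: the identity $\Phi_t(F_{\mu_t}(z))=z$ holds on all of $\uhp$ by analytic continuation from the truncated cone where $\varphi_{\mu_t}=F_{\mu_t}^{-1}-\mathrm{id}$ is defined (this is exactly how the paper uses it in Theorem~\ref{add_gen_nonaut}), and $\tilde G=G-bz$ \emph{is} a generator on $\uhp$ by Lemma~\ref{H_gen}, since every Pick function is one.
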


The resolvents in Theorem \ref{intro_2} also satisfy \eqref{intro_2_eq}. Note that Theorem \ref{intro_2} (2) follows directly from Theorem \ref{thm:unbdd}.\\

The paper is organized as follows. 
\begin{itemize}
	\item In Section \ref{sec_semi} we briefly recall some facts on semigroups and nonlinear resolvents.
	\item The proof of Theorem \ref{thm:unbdd} is provided in Section \ref{sec_interior}.
	\item In Section \ref{sec_deLo} we look at the general theory of decreasing Loewner chains and we prove Theorems \ref{intro_1} and \ref{intro_1b}.
	\item The proof of part (1) of Theorem \ref{intro_2} is based on notions from free probability theory, which we discuss in Section \ref{sec_free}.
\end{itemize}

\section{Semigroups and nonlinear resolvents}\label{sec_semi}

Let $D\subset\C^n$ be a domain. 
For a holomorphic map $f:D\to \C^n$ we denote by $f'(z)$ its (holomorphic) Jacobian matrix at $z$. 
For $n=1$, $f'(z)$ is the ordinary derivative of $f(z).$ 
We denote by $I:\C^n\to\C^n$ the identity mapping.

Let $(F_t)_{t\geq0}$ be a semigroup of holomorphic functions on $D$, i.e.\ $F_0$ is the identity, $F_{t+s}=F_s\circ F_t$, and 
$t\mapsto F_t$ is continuous with respect to locally uniform convergence on $D.$
In the sequel, we sometimes call it a \textit{semigroup} for short.
Then the locally uniform
limit \[\lim_{t\downarrow 0}\frac{F_t(z)-z}{t}=:G(z)\]
exists and the holomorphic function $G:D\to\C^n$ is called the \textit{infinitesimal generator}
(or, simply, the generator)
 of the semigroup, see \cite[Theorem 5]{Aba92}. 
(Sometimes, e.g. in \cite{ESS20}, the function $-G$ is called the infinitesimal generator.)
We denote by $\G(D)$ the set of infinitesimal generators on $D.$

The Berkson-Porta formula \cite{BP78} gives a form of infinitesimal generators on the unit disk $\D.$

\begin{lemma}\label{lem:BP}
Let $G\in\G(\D).$
Then, there exist a point $\tau\in \overline{\D} := \D \cup\partial \D$ and a holomorphic function $p:\D\to\C$ with $\Re(p)\geq0$ such that
\begin{equation}\label{Berkson-Porta}
 G(z) = (\tau-z)(1-\overline{\tau}z)p(z).
 \end{equation}
Conversely, any function $G(z)$ of the form \eqref{Berkson-Porta} is an infinitesimal generator on $\D.$
%Moreover,  $\Re p>0$ if and only if the semigroup does not consist of elliptic automorphisms when $|\tau|<1$ and it does
%not consist of parabolic automorphisms when $|\tau|=1.$
\end{lemma}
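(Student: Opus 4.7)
The plan is to prove both implications, using the Denjoy--Wolff theorem as the organizing principle and the Schwarz--Pick lemma (or its boundary horocycle analogue) as the analytic engine.

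For the \emph{sufficiency} direction, suppose $G(z)=(\tau-z)(1-\overline{\tau}z)p(z)$ with $\Re p\ge 0$. I would produce the semigroup $(F_t)$ by solving the autonomous initial-value problem $\dot w(t)=G(w(t))$, $w(0)=z\in\D$, and showing that the solution remains in $\D$ and is defined for all $t\ge 0$. The key estimate is that a conformally invariant quantity is non-increasing along the flow: the pseudo-hyperbolic distance $\rho_\tau(z)=|(z-\tau)/(1-\overline\tau z)|^2$ when $\tau\in\D$, or the horocycle function $h_\tau(z)=|1-\overline\tau z|^2/(1-|z|^2)$ when $|\tau|=1$. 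A direct calculation gives, in the interior case,
\[ \frac{d}{dt}\rho_\tau(w) \;=\; -\frac{2(1-|\tau|^2)|\tau-w|^2}{|1-\overline\tau w|^2}\,\Re p(w) \;\le\; 0, \]
with an analogous identity in the boundary case. This confines the orbit to the compact sublevel set $\{\rho_\tau\le \rho_\tau(z)\}\subset\D$ (or $\{h_\tau\le h_\tau(z)\}\subset\D$), ruling out escape to $\partial\D$ in finite time and yielding global existence. The semigroup law, and the identification of $G$ as the generator, then follow from uniqueness of ODE solutions with a holomorphic right-hand side.

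For the \emph{necessity} direction, assume $G\in\G(\D)$ generates $(F_t)$ with $G\not\equiv 0$ (the zero case being trivial, take any $\tau$ and $p\equiv 0$). The Denjoy--Wolff theorem supplies a distinguished $\tau\in\overline\D$ that is either a common interior fixed point of every $F_t$ or a boundary attractor for every orbit. Define
\[ p(z) := \frac{G(z)}{(\tau-z)(1-\overline\tau z)}. \]
Holomorphy is then automatic: in the interior case, differentiating $F_t(\tau)=\tau$ at $t=0$ gives $G(\tau)=0$, which cancels the simple zero of the denominator at $\tau$; in the boundary case the denominator equals $\tau(1-\overline\tau z)^2$ and has no zero in $\D$. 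For the bound $\Re p\ge 0$, one invokes that $\rho_\tau$ (resp.\ $h_\tau$) is non-increasing along the semigroup by Schwarz--Pick (resp.\ by Julia's lemma), differentiates the inequality at $t=0$, and reads off $\Re p\ge 0$ by reversing the identity displayed above. In the interior case one may equivalently conjugate by a M\"obius automorphism sending $0$ to $\tau$, reducing to the classical inequality $\Re[\overline z G(z)]\le 0$ obtained from $|F_t(z)|\le|z|$.

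The main obstacle is the boundary case $|\tau|=1$: the Schwarz--Pick inequality becomes degenerate and must be replaced by the Julia--Carath\'eodory horocycle estimate for the semigroup, which requires boundary analysis (existence of the angular limit of $F_t$ at $\tau$, angular derivative bounded by one) that must be performed uniformly in $t\ge 0$. Once the invariance of $h_\tau$ under every $F_t$ is established, the remainder of the argument runs in parallel with the interior case, and the whole proof reduces to the single computation of $\tfrac{d}{dt}\rho_\tau$ or $\tfrac{d}{dt}h_\tau$ performed in two flavours.
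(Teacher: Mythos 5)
The paper does not actually prove this lemma; it quotes it as the Berkson--Porta theorem with a citation to \cite{BP78}, so you are supplying a proof of a classical result. Your necessity direction is essentially the standard argument and is sound: the interior case via $F_t(\tau)=\tau$, Schwarz--Pick and differentiation at $t=0$, and the boundary case via the horocycle invariance $F_t(E(\tau,R))\subset E(\tau,R)$, which is exactly Julia's lemma applied to each individual $F_t$ (each non-identity $F_t$ has Denjoy--Wolff point $\tau$, as the paper notes before Lemma \ref{lem:SPDW}); no uniformity in $t$ is needed, so the difficulty you flag there is not a real one. Your two monotonicity identities are also correct: with $\dot w=G(w)$ one indeed gets $\frac{d}{dt}\rho_\tau(w)=-2(1-|\tau|^2)|\tau-w|^2\Re p(w)/|1-\bar\tau w|^2$ and, for $|\tau|=1$, $\frac{d}{dt}h_\tau(w)=-2|1-\bar\tau w|^4\Re p(w)/(1-|w|^2)^2$.

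The genuine gap is in the sufficiency direction when $|\tau|=1$. You assert that the sublevel set $\{h_\tau\le h_\tau(z)\}$ is a compact subset of $\D$; it is not. It is a horodisk $E(\tau,R)$, whose closure meets $\partial\D$ at the point $\tau$. Hence monotonicity of $h_\tau$ confines the orbit to $E(\tau,R)$ but does not rule out the only remaining failure mode: the trajectory reaching $\tau$ in \emph{finite} time, at which point the maximal solution ceases to exist in $\D$. This cannot be dismissed by crude bounds: on $E(\tau,R)$ one only gets $|G(z)|=|\tau-z|^2|p(z)|\le C_R$ (bounded speed), and bounded speed is compatible with hitting $\tau$ in finite time; equivalently, in the half-plane model the equation becomes $\dot W=q(W)$ with $\Re q\ge0$, where Carath\'eodory functions can grow quadratically near the boundary of $\{\Re W\ge\delta\}$, so a Gronwall-type estimate does not preclude finite-time escape to $\infty$. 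Excluding this is the real content of the boundary case and needs an additional ingredient, e.g.\ approximating $G$ by $G_r(z)=(r\tau-z)(1-r\bar\tau z)p(z)$ with interior Denjoy--Wolff point $r\tau$, using your (correct) interior argument for each $r<1$, and passing to the limit $r\to1$ with a normal-families argument, or invoking a known flow-invariance/semicompleteness criterion as in \cite{BP78} or the Reich--Shoikhet theory. In the interior case your compactness argument is fine, so only this boundary step needs repair.
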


If $G(z)\not\equiv 0$, we call $\tau$ the \textit{Denjoy-Wolff point} of the semigroup $(F_t)_{t\geq0}$ generated by $G$. 
Unless $(F_t)_{t\geq0}$ consists of elliptic automorphisms, 
we have $F_t(z)\to \tau$ as $t\to\infty$ locally uniformly on $\D$. 
Here, we recall that an automorphism of $\D$ is called \textit{elliptic} if it has
an interior fixed point in $\D.$
If $(F_t)_{t\geq0}$ does consist of elliptic automorphisms, then its Denjoy-Wolff point is defined to be
its unique common fixed point, i.e. $\tau\in\D$ and $F_t(\tau)=\tau$ for all $t\geq0$.
See \cite{BCDM20} for details.

Let $\varphi$ be a conformal homeomorphism of $\D$ onto a simply connected domain $D$ in the Riemann sphere $\sphere=\C\cup\{\infty\}.$
It is known that $\varphi$ extends to a continuous map $\overline\D$ to
$\overline D\subset\sphere$ if and only if $\partial D$ is locally connected (see e.g. \cite[Theorem 2.1]{Pom92}).
In this case, we can generalize the notion of the Denjoy-Wolff point as follows.
Let $(F_t)_{t\ge0}$ be a semigroup on $D.$
Suppose that the conjugated semigroup
$\hat F_t=\varphi^{-1}\circ F_t\circ\varphi$ on $\D$ has the Denjoy-Wolff
point at $\tau\in\overline\D.$
We may define $\sigma=\varphi(\tau)\in\overline D$ to be the Denjoy-Wolff point
of the semigroup $(F_t)$ on $D.$
Indeed, $F_t\to \sigma$ locally uniformly on $D$ as $t\to\infty$ unless $(F_t)$ consists
of automorphisms of $D$ with a common interior fixed point at $\sigma.$

We are interested in general characterizations of infinitesimal generators.
One of them is described in terms of nonlinear resolvents as an analogue of the Hille-Yosida theory for semigroups of linear operators on a Banach space.

\begin{definition}
Let $D$ be a domain in $\C^n$ and $G:D\to \C^n$ be a holomorphic map. 
Let $t\geq 0$. If the equation
\begin{equation}\label{nl} w = z - t\cdot G(z) \end{equation}
has a solution $z$ in $D$ for each $w\in D$ in such a way that
the mapping $w\mapsto z=J_t(w)$ is a holomorphic self-map of $D,$ then
the mapping $J_t(w)=J_t(w,G)$ is 
called the \textit{nonlinear resolvent} (at ``time'' $t$) of $G$.
\end{definition}

It is noteworthy to mention the following fact.
When $D$ is a bounded convex domain in $\C^n,$ for each $w\in D$
there exists a unique solution $z=J_t(w)\in D$ to the equation $w=z-t\cdot G(z)$
as is shown at step 6 in the proof of \cite[Theorem 1.1]{RS97}.
The following characterization of infinitesimal generators is due to Reich and Shoikhet
\cite[Theorem 1.1 and Corollary 1.2]{RS97}.

\begin{theorem}[\cite{RS97}]\label{thm:RS}
Let $D$ be a bounded convex domain in $\C^n.$
A holomorphic map $G:D\to\C^n$ admits nonlinear resolvents $z=J_t(w)$
in \eqref{nl} for all $t\ge0$
if and only if $G$ is an infinitesimal generator of a semigroup $(F_t)$ on $D.$
Moreover, in this case,
$$
F_t=\lim_{n\to\infty}J_{\frac tn}\circ\cdots\circ J_{\frac tn} ~(n\operatorname{-times})
=\lim_{n\to\infty}{J_{\frac tn}}^{\circ n}.
$$
\end{theorem}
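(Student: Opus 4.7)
The theorem is a nonlinear Hille--Yosida-type statement, so my plan is to use the exponential formula $F_t=\lim_n J_{t/n}^{\circ n}$ as the bridge between the two implications: in the forward direction I build the resolvents assuming the semigroup, and in the converse I build the semigroup out of the iterated resolvents.

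For the forward direction, assume $G\in\G(D)$ generates a semigroup $(F_t)$ on the bounded convex domain $D$. Fixing $w\in D$ and $t>0$, I want a solution $z\in D$ to $z-tG(z)=w$ depending holomorphically on $w$. I would run a continuity method on $t$: let $T\subset[0,\infty)$ be the set of $t$ for which such a holomorphic resolvent $J_t$ exists on all of $D$, note $0\in T$, and prove $T$ is open and closed in $[0,\infty)$. Openness comes from the holomorphic implicit function theorem, using that $I-tG'(z)$ is invertible for $t\ge 0$ --- an infinitesimal consequence of the Schwarz--Pick nonexpansiveness of the $F_t$ on the bounded convex domain $D$. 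Closedness uses boundedness of $D$ to extract a limit of solutions $z_n\to z\in\overline D$, together with the boundary-inwardness inherent in $G$ being a generator on a convex domain, which rules out $z\in\partial D$. A useful regularization, if needed, is to replace $G$ by $G_\varepsilon(z)=G(z)-\varepsilon(z-z_0)$ for a fixed $z_0\in D$, so that the resolvent of $G_\varepsilon$ is produced directly by the Banach contraction principle on the bounded set $D$, after which one passes to the limit $\varepsilon\to 0$.

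For the converse, including the formula $F_t=\lim_n J_{t/n}^{\circ n}$, assume every $J_t$ exists. My plan is the Crandall--Liggett scheme adapted to the holomorphic category. First, each iterate $J_{t/n}^{\circ n}$ is a holomorphic self-map of the bounded domain $D$, so the family $\{J_{t/n}^{\circ n}\}_n$ is normal by Montel's theorem. Next, I would derive the resolvent identity $J_t(w)=J_s\bigl((1-s/t)J_t(w)+(s/t)w\bigr)$ directly from $w=z-tG(z)$, and use it in a telescoping estimate to compare iterates $J_{t/n}^{\circ n}(z)$ and $J_{t/m}^{\circ m}(z)$, exploiting that holomorphic self-maps of $D$ are nonexpansive for the Kobayashi metric to obtain uniform control on compact subsets of $D$. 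This upgrades normality to convergence of the entire sequence, yielding the limit $F_t:D\to D$. The semigroup law $F_{s+t}=F_s\circ F_t$ then follows by reparameterization: $F_t$ can be realized as $\lim_k J_{\tau_k}^{\circ n_k}$ for any $\tau_k\to 0$ with $n_k\tau_k\to t$, so composing two such limits gives the semigroup identity. Finally, from $J_s(z)-z=sG(J_s(z))$ together with $J_s(z)\to z$ as $s\downarrow 0$ one obtains $(F_t(z)-z)/t\to G(z)$ at $t=0$, identifying $G$ as the generator.

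The hardest step, I expect, is the telescoping estimate: upgrading normality to convergence of the full Euler sequence is the classical obstacle in nonlinear Hille--Yosida theory. In the Banach-space setting this requires a careful quantitative bound using accretivity; here the natural substitute is the Kobayashi-metric nonexpansiveness of holomorphic self-maps of the bounded convex $D$, which I expect provides the uniform estimates independent of the discretization parameter $n$.
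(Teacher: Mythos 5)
This theorem is not proved in the paper at all: it is imported verbatim from Reich--Shoikhet \cite[Theorem 1.1 and Corollary 1.2]{RS97}, so your attempt has to be judged on its own merits. Your overall plan (resolvent identity, Kobayashi nonexpansiveness, Euler--Crandall--Liggett scheme) is indeed the spirit of the Reich--Shoikhet hyperbolic-metric theory, but as written it has gaps at exactly the decisive points.

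In the forward direction, the closedness step of your continuity method is the whole difficulty and you only assert it: ``boundary-inwardness inherent in $G$ being a generator'' is not an argument, and ruling out $J_{t_n}(w)\to\partial D$ as $t_n\to t^*$ is precisely where boundedness and convexity must be exploited through a genuine mechanism (e.g.\ Reich--Shoikhet find the resolvent as the unique null point of the strongly dissipative generator $G(z)+\tfrac1t(w-z)$, attracting for its own semigroup; alternatively one can replace $G$ by $\tfrac1s(F_s-I)$, solve the equivalent fixed-point equation $z=\tfrac{s}{s+t}w+\tfrac{t}{s+t}F_s(z)$ by Earle--Hamilton since the right side maps $D$ strictly inside $D$, and let $s\to0$). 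Two of your supporting claims are also wrong or unsupported: $I-tG'(z)$ is \emph{not} invertible for all $z\in D$ and $t\ge0$ (take $G(z)=1-z^2\in\G(\D)$, $z=-1/2$, $t=1$); what is true, and what the implicit function theorem actually needs, is invertibility at the solution point $z=J_{t_0}(w)$, which comes from differentiating $\varphi_{t_0}\circ J_{t_0}=I$. And the regularization $G_\eps(z)=G(z)-\eps(z-z_0)$ does not hand you the resolvent ``directly by the Banach contraction principle on $D$'': the relevant map $z\mapsto w+tG_\eps(z)$ is neither a self-map of $D$ nor an evident contraction in any metric; the correct substitute is again a hyperbolic-metric strict-contraction (Earle--Hamilton type) argument.

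In the converse direction, your resolvent identity $J_t(w)=J_s\bigl(\tfrac st w+(1-\tfrac st)J_t(w)\bigr)$ is correct and convexity is used correctly there, but the telescoping estimate cannot be run on Kobayashi nonexpansiveness alone. The Crandall--Liggett recursion needs a convexity-type inequality for the metric under linear convex combinations, roughly $k_D\bigl(\lambda w+(1-\lambda)u,\,\lambda w+(1-\lambda)v\bigr)\le(1-\lambda)\,k_D(u,v)$ or the recursive inequality it produces; for the Kobayashi distance on a convex domain one only gets nonexpansiveness (and max-type estimates for convex combinations), so the quantitative comparison of $J_{t/n}^{\circ n}$ and $J_{t/m}^{\circ m}$ -- which you yourself flag as the hardest step -- is exactly the missing content, and it is where \cite{RS97} invest their work. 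Normality via Montel only gives subsequential limits; without that estimate you do not get convergence of the full sequence, nor the exponential formula. Finally, the identification of the generator at the end also needs $J_s\to I$ locally uniformly as $s\downarrow0$ (equivalently, that $J_s(w)$ does not degenerate to the boundary), which again requires an argument rather than a remark.
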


%\begin{example}\label{ex_H}Let $D=\uhp$ and consider the semigroup $F_t(z)=e^t z$. Then $G(z)=z$ and the equation $w = z - t z$ 
%has no solution in $\uhp$ for $t\geq 1$, but $J_t(w)=\frac{w}{1-t}$ for $t\in[0,1)$.\\
%We can consider the generator $G(z)=z$ also for $D=\C$, where  $J_t$ exists if and only if $t\not=1$.
%\end{example}

\begin{example}\label{ex:2t}
Let
$$
F_{t}(\zeta) = \frac{\zeta+\tanh(t/2)}{1+\zeta\tanh(t/2)} ,\quad \zeta\in\D,\; t\ge0.
$$
Then $(F_{t})_{t\geq0}$ is a semigroup of holomorphic mappings on $\D$ whose Denjoy-Wolff point is 1. The generator $H$ is given by $H(\zeta) = (1-\zeta^{2})/2$, and due to Theorem \ref{thm:RS} the nonlinear resolvent always exists.
On the other hand, consider the conjugate $\hat{F}_{t}$ by $C(z)=(z-i)/(z+i) :\uhp\to\D$, namely $\hat{F}_{t} := C^{-1} \circ F_{t} \circ C$. 
Then 
$$
\hat{F}_{t}(z) = e^{t}z, \quad\quad z\in\uhp,\; t\ge0,
$$
is a semigroup of holomorphic mappings on $\uhp$ with generator $G(z) = z$.
In this case the equation $w = z - t z$ has no solution in $\uhp$ for $t\geq 1$, and the resolvent $J_t(w)=\frac{w}{1-t}$ exists only for $t\in[0,1)$.
We can consider the generator $G(z)=z$ also for $D=\C$, where  $J_t$ exists if and only if $t\not=1$.
\end{example}

\begin{example}\label{ex:3t}
Consider the semigroup \[F_t(z_1,z_2)=(z_1e^{tz_1z_2}, z_2e^{-tz_1z_2})\] on $\C^2$. Its infinitesimal generator $G$ is given by $G(z_1,z_2)=(z_1^2z_2, -z_1z_2^2)$ and the nonlinear resolvent equation $(1,0)=(z_1-tz_1^2z_2, z_2+tz_1z_2^2)$ has several solutions for $t>0$, e.g.\ $(z_1,z_2)=(1,0)$ and $(z_1,z_2) = (1/2, -2/t)$. [The mapping $(z_1e^{z_1z_2}, z_2e^{-z_1z_2})$ is given in \cite{AL92} as an example of an automorphism of $\C^2$ which is not a composition of shears.]
Note that the domain $\C^2$ is not Kobayashi hyperbolic.
\end{example}

\begin{lemma}\label{rm_1}
Let $D$ be a bounded and convex domain in $\C^n$ and let $G\in \G(D)$ with nonlinear resolvents $(J_t)$. Then $G\circ J_t\in \G(D)$ for all 
$t\geq 0$. 
\end{lemma}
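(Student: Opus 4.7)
The plan is to apply Theorem \ref{thm:RS}: to prove that $G\circ J_t$ lies in $\G(D)$, it suffices to produce, for every $s\geq 0$ and every $w\in D$, a point $z\in D$ solving
\[
w \;=\; z - s\,(G\circ J_t)(z)
\]
so that $w\mapsto z$ is holomorphic. The case $t=0$ is immediate since $J_0=I$, so fix $t>0$.

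The key step is the change of variables $u=J_t(z)$. By definition of the resolvent, this means $z=u-tG(u)$, and the equation above transforms into
\[
w \;=\; u - tG(u) - sG(u) \;=\; u - (t+s)G(u),
\]
which is precisely the defining equation for $u=J_{t+s}(w)$. Since $G\in\G(D)$, Theorem \ref{thm:RS} guarantees that $J_{t+s}(w)$ exists and lies in $D$, and the corresponding $z$ is recovered as $z=J_{t+s}(w)-t\,G(J_{t+s}(w))$.

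To see that this $z$ belongs to $D$, set $u=J_{t+s}(w)$ and rewrite
\[
z \;=\; u - tG(u) \;=\; \frac{s}{t+s}\,u + \frac{t}{t+s}\bigl(u-(t+s)G(u)\bigr) \;=\; \frac{s}{t+s}\,J_{t+s}(w) + \frac{t}{t+s}\,w,
\]
so $z$ is a convex combination of $J_{t+s}(w)\in D$ and $w\in D$; convexity of $D$ forces $z\in D$. The map $w\mapsto z$ is holomorphic because $J_{t+s}$ is. This exhibits the nonlinear resolvent of $G\circ J_t$ at time $s$, and since $s\geq 0$ was arbitrary, Theorem \ref{thm:RS} yields $G\circ J_t\in\G(D)$.

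The main conceptual step is spotting the substitution $u=J_t(z)$ that collapses the composed equation into an ordinary resolvent equation for $G$ at the shifted time $t+s$; once this is seen, boundedness is not needed for any estimate, convexity alone ensures $z\in D$, and no further obstacle arises.
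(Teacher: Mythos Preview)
Your argument is correct and takes a genuinely different route from the paper. The paper's proof is a two-liner: for $t>0$ one has $(G\circ J_t)(w)=(J_t(w)-w)/t$, and it is a known fact \cite[Proposition 4.3]{RS96} that $f-I\in\G(D)$ for any holomorphic self-map $f:D\to D$ of a bounded convex domain; closure under positive scaling finishes it. Your approach instead constructs the resolvents of $G\circ J_t$ directly and appeals to Theorem~\ref{thm:RS}. This is less slick but has the advantage of being self-contained and of producing the explicit identity
\[
J_s(\,\cdot\,;\,G\circ J_t)\;=\;\tfrac{s}{t+s}\,J_{t+s}\;+\;\tfrac{t}{t+s}\,I,
\]
which is itself a pleasant byproduct.

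One small point deserves to be made explicit. After constructing $z=u-tG(u)$ with $u=J_{t+s}(w)$ and checking $z\in D$, you still need to verify that this $z$ solves the original equation $w=z-s\,G(J_t(z))$; this reduces to $J_t(z)=u$, i.e.\ $J_t(\varphi_t(u))=u$. That identity is not automatic from $\varphi_t\circ J_t=I$: it requires the \emph{uniqueness} of solutions in $D$ to the resolvent equation, which the paper records just before Theorem~\ref{thm:RS}. Your substitution uses this implicitly, but the verification step should cite it. Finally, your closing remark that ``boundedness is not needed'' is a slight overstatement: you do not use boundedness for an estimate, but Theorem~\ref{thm:RS} (both directions) and the uniqueness statement are formulated for bounded convex $D$.
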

\begin{proof}
This is clearly true for $t=0$. So let $t>0$. Then  $(G\circ J_t)(w)= (J_t(w)-w)/t$.
Now we use the fact that $f-I$, $f:D\to D$ holomorphic, is always an infinitesimal generator on bounded convex domains, see \cite[Proposition 4.3]{RS96}, and the fact that $r\cdot G$ is an infinitesimal generator for every infinitesimal generator $G$ and $r>0$.
\end{proof}

The following lemma can be checked easily.

\begin{lemma}\label{lem:vf}
Let $(F_t)_{t\geq 0}$ be a semigroup on a domain $D\subset\C^n$ with generator $G:D\to \C^n$ and let $\varphi:\hat D\to D$ be a biholomorphic mapping for another domain $\hat D\subset \C^n$.
Then the maps $\hat{F}_t=\varphi^{-1}\circ F_t \circ \varphi$ form a semigroup on $\hat D$ with generator $\hat G(z)=(\varphi'(z))^{-1} G(\varphi(z))$.
Here, $\varphi'(z)$ is the Jacobian matrix of $\varphi$ at $z\in \hat D.$
\end{lemma}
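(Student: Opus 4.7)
The statement is essentially a transport-of-structure computation, so my plan is to verify the semigroup axioms directly, then compute the generator via the chain rule. I do not expect any real obstacle here.

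First I would check that $(\hat F_t)_{t\geq 0}$ is a semigroup on $\hat D$. Since $\varphi$ is a biholomorphism, each $\hat F_t = \varphi^{-1}\circ F_t\circ\varphi$ is holomorphic from $\hat D$ into $\hat D$. The identity element is immediate: $\hat F_0 = \varphi^{-1}\circ I\circ\varphi = I$. The semigroup law is a one-line cancellation,
\[
\hat F_s\circ \hat F_t=(\varphi^{-1}\circ F_s\circ\varphi)\circ(\varphi^{-1}\circ F_t\circ\varphi)=\varphi^{-1}\circ F_s\circ F_t\circ\varphi=\varphi^{-1}\circ F_{s+t}\circ\varphi=\hat F_{s+t}.
\]
Continuity of $t\mapsto \hat F_t$ with respect to locally uniform convergence on $\hat D$ follows from the analogous property of $(F_t)$ together with the fact that $\varphi$ and $\varphi^{-1}$ are continuous and map compact sets to compact sets.

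Next I would identify the generator. By definition,
\[
\hat G(z)=\lim_{t\downarrow 0}\frac{\hat F_t(z)-z}{t}=\lim_{t\downarrow 0}\frac{\varphi^{-1}(F_t(\varphi(z)))-\varphi^{-1}(\varphi(z))}{t}.
\]
Applying the chain rule (which is legitimate because $\varphi^{-1}$ is holomorphic near $\varphi(z)$ and $t\mapsto F_t(\varphi(z))$ is differentiable at $t=0$ with derivative $G(\varphi(z))$) gives
\[
\hat G(z)=(\varphi^{-1})'(\varphi(z))\cdot G(\varphi(z)).
\]
Finally, differentiating the identity $\varphi^{-1}\circ\varphi=I$ yields $(\varphi^{-1})'(\varphi(z))\cdot\varphi'(z)=I$, so $(\varphi^{-1})'(\varphi(z))=(\varphi'(z))^{-1}$, and hence $\hat G(z)=(\varphi'(z))^{-1}G(\varphi(z))$, as claimed.

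The only small care point is that the limit defining the generator is locally uniform rather than pointwise; this is inherited from the corresponding limit for $(F_t)$ because $\varphi^{-1}$ has a uniformly bounded derivative on any compact subset of $D$ and maps locally uniform convergence to locally uniform convergence. No step is substantive enough to single out as the main obstacle.
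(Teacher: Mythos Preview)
Your proof is correct and essentially the same as the paper's: both amount to a chain-rule computation at $t=0$, with the paper differentiating the relation $\varphi\circ\hat F_t=F_t\circ\varphi$ directly (so that $\varphi'(z)$ appears on the left rather than $(\varphi^{-1})'(\varphi(z))$ on the right), while you differentiate $\hat F_t=\varphi^{-1}\circ F_t\circ\varphi$ and then invoke the inverse function relation. Your version is a bit more detailed in verifying the semigroup axioms and the locally uniform nature of the limit, which the paper leaves implicit.
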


\begin{proof}
Differentiation of both sides of $\varphi\circ\hat F_t=F_t\circ\varphi$ with respect to $t$
gives us the formula
$$
(\varphi'\circ\hat F_t)\frac{d\hat F_t}{dt}=\frac{dF_t}{dt}\circ\varphi.
$$
Letting $t=0,$ we obtain the required formula.
\end{proof}

We now use the Cayley transform $C:\uhp\to\D$, $C(z)=(z-i)/(z+i)$, to obtain a general form of generators on the upper half-plane $\uhp$. 
Let $G\in \G(\D)$. Then $G$ is expressed in the form \eqref{Berkson-Porta}.
By the last lemma, the corresponding generator $\hat G$ on $\uhp$ has the form 
\begin{align*}
\hat G(z) 
&= G(C(z))/C'(z)\\
&= G(C(z))\frac{-i (z+i)^2}{2} \\
%&= \frac{-i (z+i)^2}{2}   (\tau-C(z))(1-\overline{\tau}C(z)) \cdot p(C(z))\\
&=
 -(z+i)^2   (\tau-C(z))(1-\overline{\tau}C(z)) \cdot \frac{i}{2}p(C(z)).
 \end{align*}
If $\tau=1$, this expression reduces to  $\hat G(z) =  2i p(C(z))$. 
If $\tau\not=1$, then the first three factors form a polynomial of degree $2$ with zeros $\sigma := C^{-1}(\tau)\in \uhp\cup \R$ and $\overline{\sigma}$. In this case we obtain
$\hat G(z)= (1-\tau)(1-\overline{\tau})(z-\sigma)(z-\overline{\sigma}) \cdot \frac{i}{2}p(C(z))$. Clearly, if $\hat G(z)\not\equiv 0$, then $\sigma$ is the Denjoy-Wolff point of the semigroup $\hat{F}_t$. Note that $ip(C(z))$ is a \textit{Pick function}, i.e. a holomorphic function from $\uhp$ into $\uhp\cup \R$. We denote by $\Nev$ the set of all Pick functions. We can summarize our calculation as follows (it is essentially contained in \cite{BP78}, where the right half-plane is considered).

\begin{lemma}\label{H_gen}
Every $G\in\G(\uhp)$ has the form 
\[  G(z)=q(z) \qquad \text{or} \qquad   G(z)=(z-\sigma)(z-\overline{\sigma})q(z), \]
with $q\in\Nev$, $\sigma \in \uhp\cup\R$. Conversely, all functions of these forms are infinitesimal generators on $\uhp$.
The first case corresponds to all semigroups whose Denjoy-Wolff point is $\infty$ unless $G=0$. In the second case, $\sigma$ is the Denjoy-Wolff point unless $G=0$.
\end{lemma}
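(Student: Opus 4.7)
The plan is to reduce the claim to the Berkson-Porta formula on the disk (Lemma \ref{lem:BP}) via the conjugation rule for generators (Lemma \ref{lem:vf}), using the Cayley transform $C:\uhp\to\D$ with $C(z)=(z-i)/(z+i)$. The computation was already sketched in the paragraphs preceding the lemma; my task is to package it as an if-and-only-if with the correct identification of the Denjoy-Wolff point.

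For the forward direction, given $\hat G\in\G(\uhp)$, I would set $G(w):=C'(C^{-1}(w))\,\hat G(C^{-1}(w))$ so that $G\in\G(\D)$ by Lemma \ref{lem:vf}, apply Berkson-Porta to write $G(w)=(\tau-w)(1-\overline{\tau}w)p(w)$ with $\tau\in\overline{\D}$ and $\Re p\ge 0$, and undo the conjugation to obtain
\[ \hat G(z) \;=\; \frac{G(C(z))}{C'(z)} \;=\; -\frac{i(z+i)^{2}}{2}\,(\tau-C(z))(1-\overline{\tau}C(z))\,p(C(z)). \]
I would then split on whether $\tau=1$. If $\tau=1$, the identities $(z+i)(\tau-C(z))=2i=(z+i)(1-\overline{\tau}C(z))$ collapse the prefactor to $2i$, giving $\hat G(z)=2i\,p(C(z))=:q(z)$; since $\Im q=2\Re(p\circ C)\ge 0$, we have $q\in\Nev$. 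If $\tau\ne 1$, expanding $(z+i)(\tau-C(z))=(\tau-1)z+i(\tau+1)$ and the analogous identity for the other factor shows the prefactor is a quadratic in $z$ with leading coefficient $-|1-\tau|^{2}$, and a short computation identifies its roots as $\sigma:=C^{-1}(\tau)=i(1+\tau)/(1-\tau)\in\uhp\cup\R$ and $\overline{\sigma}$. Combining, $\hat G(z)=(z-\sigma)(z-\overline{\sigma})\,q(z)$ with $q(z):=\tfrac{i|1-\tau|^{2}}{2}\,p(C(z))\in\Nev$.

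The converse follows by reversing the same calculation: from $q\in\Nev$ (and $\sigma$ in the quadratic case) one recovers $\tau\in\overline{\D}$ and a holomorphic $p$ on $\D$ with $\Re p\ge 0$ such that the conjugate $G$ on $\D$ is exactly of Berkson-Porta form, hence $G\in\G(\D)$ by the converse part of Lemma \ref{lem:BP}, and then Lemma \ref{lem:vf} returns $\hat G\in\G(\uhp)$. For the Denjoy-Wolff statement, the extension of the notion given just after Lemma \ref{lem:BP} (applied to $\varphi=C^{-1}$) maps the disk Denjoy-Wolff point $\tau$ to $\sigma=C^{-1}(\tau)\in\overline{\uhp}$; the case $\tau=1$ is exactly $\sigma=\infty$ (the first form), while $\tau\in\overline{\D}\setminus\{1\}$ gives $\sigma\in\uhp\cup\R$ (the second form).

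The main obstacle --- really the only nonroutine point --- is verifying in the case $\tau\ne 1$ that the two roots of the quadratic prefactor are genuine complex conjugates lying in $\overline{\uhp}$. This goes through cleanly because $C$ intertwines complex conjugation on $\overline{\uhp}$ with the reflection $w\mapsto 1/\overline{w}$ across $\T$, so that $\tau$ and $1/\overline{\tau}$ correspond to $\sigma$ and $\overline{\sigma}$; one can also confirm this by checking $\sigma+\overline{\sigma}=-4\Im\tau/|1-\tau|^2$ and $|\sigma|^2=|1+\tau|^2/|1-\tau|^2$ against Vieta's formulas for the quadratic above. Everything else is elementary algebra.
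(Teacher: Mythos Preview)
Your proposal is correct and follows essentially the same route as the paper: conjugate via the Cayley transform $C$, apply the Berkson--Porta formula on $\D$ (Lemma~\ref{lem:BP}) through the conjugation rule of Lemma~\ref{lem:vf}, and split on whether $\tau=1$. The paper carries out exactly this computation in the paragraphs immediately preceding the lemma and simply summarizes the outcome; your write-up is slightly more detailed in making the converse explicit and in verifying that the quadratic factor has roots $\sigma=C^{-1}(\tau)$ and $\overline{\sigma}$ (via the observation that $C$ intertwines complex conjugation with $w\mapsto 1/\overline{w}$), but there is no substantive difference in approach.
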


\begin{example}
For the semigroup $F_t(z)=e^{-t}z$ on $\D$ with generator $H(z)=-z$ we obtain the resolvents $J_t(w)=w/(1+t)$ for all $t \ge 0$. 
The conjugated resolvent on $\uhp$ is given by $(C^{-1}\circ J_t \circ C)(z)=i[2z+t(z+i)]/[2i+t(z+i)]$.
On the other hand, by the above argument, the generator of the conjugated semigroup $\hat{F_{t}}$ on $\uhp$ is calculated as $G(z)=\frac{i}{2}(z^2+1) = (z-i)(z+i)\cdot\frac{i}{2}$.
Since one can observe that the equation $w = z - t\cdot G(z)$ does not lead to a M\"obius map, the nonlinear resolvents of a conjugated semigroup are in general not equal to the conjugated resolvents.
\end{example}

\section{Existence of nonlinear resolvents in unbounded domains}\label{sec_interior}

\subsection{Proof of Theorem \ref{thm:unbdd}}

We denote by $\D(a,r)$ the disk $|z-a|<r$ in the complex plane $\C.$
For $\tau\in\D$ and $0<\rho<1,$ we define
$$
\Delta(\tau, \rho)=\{z\in\D: \big|\tfrac{z-\tau}{1-\bar\tau z}\big|<\rho\}
=\D\Big(\tfrac{(1-\rho^2)\tau}{1-\rho^2|\tau|^2},\tfrac{(1-|\tau|^2)\rho}{1-\rho^2|\tau|^2}\Big),
$$
which is the hyperbolic disk of center $\tau$ with radius $\frac12\log\frac{1+\rho}{1-\rho}.$
For $\tau\in\partial\D$ and $0<R<+\infty,$ we define
$$
E(\tau,R)=\{z\in\D: \tfrac{|z-\tau|^2}{1-|z|^2}<R\}
=\D\big(\tfrac{\tau}{1+R},\tfrac{R}{1+R}\big),
$$
which is often called a horocycle of center $\tau.$
Note that
$$
\bigcup_{0<\rho<1}\Delta(a,\rho)=\D
\aand
\bigcup_{0<R<+\infty} E(\tau,R)=\D
$$
for each $a\in\D$ and $\tau\in\partial\D$.

Let $f:\D\to\D$ be a non-identity holomorphic map. We define the \textit{Denjoy-Wolff point} of $f$ similar to the definition for semigroups. If $f$ is not an elliptic automorphism, then the Denjoy-Wolff point is the unique point $\tau\in\overline{\D}$ such that $f^{\circ n}$, the $n$-th iterate of $f$, converges to $\tau$ locally uniformly on $\D$ as $n\to\infty$. If $f$ is an elliptic automorphism, we define the Denjoy-Wolff point of $f$ as the unique fixed point $\tau\in\D$ of $f$.

If $(F_t)_{t\geq0}$ is a semigroup on $\D$ with Denjoy-Wolff point $\tau$, then the Denjoy-Wolff point of each $F_t$ is equal to $\tau$ as well provided $F_t$ is not the identity.

\begin{lemma}\label{lem:SPDW}
Let $f:\D\to\D$ be a non-identity holomorphic map with Denjoy-Wolff point $\tau\in\overline{\D}$.
\begin{enumerate}
\item[(i)]
When $|\tau|<1,$ $f(\Delta(\tau,\rho))\subset\Delta(\tau,\rho)$ for all $\rho\in(0,1).$
\item[(ii)]
When $|\tau|=1,$ $f(E(\tau,R))\subset E(\tau,R)$ for all $R>0.$
\end{enumerate}
\end{lemma}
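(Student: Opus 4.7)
The plan is to handle the two cases by standard classical tools: the Schwarz--Pick lemma for the interior Denjoy-Wolff point, and Julia's lemma together with the boundary Denjoy-Wolff estimate for the parabolic/hyperbolic case.

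For part (i), the first step is to observe that the interior Denjoy-Wolff point is automatically a fixed point of $f$. If $f$ is an elliptic automorphism this is by definition; otherwise $f^{\circ n}\to\tau$ locally uniformly on $\D$, so by continuity $f(\tau)=\lim_n f^{\circ(n+1)}(\tau)=\tau$. I would then conjugate by the disk automorphism $\varphi_\tau(z)=(z-\tau)/(1-\bar\tau z)$: the map $g=\varphi_\tau\circ f\circ\varphi_\tau^{-1}$ is a holomorphic self-map of $\D$ with $g(0)=0$, so Schwarz's lemma gives $|g(w)|\le|w|$ for all $w\in\D$. Substituting $w=\varphi_\tau(z)$ and unravelling yields
\[
\Bigl|\tfrac{f(z)-\tau}{1-\bar\tau f(z)}\Bigr|\le\Bigl|\tfrac{z-\tau}{1-\bar\tau z}\Bigr|\quad\text{for all } z\in\D,
\]
which is precisely the pseudohyperbolic-disk invariance $f(\Delta(\tau,\rho))\subset\Delta(\tau,\rho)$.

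For part (ii), I would invoke Julia's lemma. For a non-identity holomorphic self-map $f$ of $\D$ and a boundary point $\tau\in\partial\D$ at which the Julia quotient
\[
\alpha:=\liminf_{z\to\tau}\frac{1-|f(z)|}{1-|z|}
\]
is finite, Julia's lemma produces a boundary point $\tau^{*}\in\partial\D$ (the nontangential limit of $f$ at $\tau$) such that
\[
\frac{|\tau^{*}-f(z)|^{2}}{1-|f(z)|^{2}}\le\alpha\cdot\frac{|\tau-z|^{2}}{1-|z|^{2}}\quad\text{for all } z\in\D.
\]
The crucial input from Denjoy-Wolff theory (see \cite{BCDM20}) is that when $\tau\in\partial\D$ is the Denjoy-Wolff point of $f$, one has both $\tau^{*}=\tau$ and $\alpha\le 1$. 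Plugging these into the displayed inequality gives
\[
\frac{|\tau-f(z)|^{2}}{1-|f(z)|^{2}}\le\frac{|\tau-z|^{2}}{1-|z|^{2}},
\]
which is exactly $f(E(\tau,R))\subset E(\tau,R)$ for every $R>0$.

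The only substantive point is the Denjoy-Wolff bound $\alpha\le 1$ in the boundary case; everything else is a routine rewriting or a textbook invocation. I do not expect any real obstacle here, only the care needed to match the specific horocycle normalization in the statement of $E(\tau,R)$ with the standard form of Julia's lemma.
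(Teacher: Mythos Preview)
Your argument is correct and matches the paper's own justification: the paper simply states that (i) follows from the Schwarz--Pick lemma and that (ii) is contained in the Denjoy--Wolff theorem (citing \cite[Theorem 1.8.4]{BCDM20}), which is exactly the Julia-lemma-plus-$\alpha\le1$ route you spell out.
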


The assertion (i) follows from the Schwarz-Pick lemma and the assertion (ii)
is contained in the Denjoy-Wolff theorem (see \cite[Theorem 1.8.4]{BCDM20}).

For the proof of Theorem \ref{thm:unbdd}, we also need the following well-known result due to Study \cite{S11} and Robertson \cite{R36}.

\begin{lemma}\label{lem:convex}
Let $\varphi$ be a holomorphic function mapping $\D$ univalently onto a convex domain.
Then $\varphi(\Delta)$ is convex for every disk $\Delta$ contained in $\D.$
\end{lemma}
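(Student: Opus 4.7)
I would apply the classical Study-type analytic criterion for convex univalence: a univalent holomorphic $f:\D\to\C$ has convex image if and only if
\[
\Re\!\left(1+\frac{zf''(z)}{f'(z)}\right)>0\quad\text{for all }z\in\D.
\]
Writing $\Delta=\D(a,r)\subset\D$ and parametrizing it by the affine map $\psi(z)=a+rz$, the composition $\varphi\circ\psi:\D\to\varphi(\Delta)$ is univalent, and (using $\psi''\equiv0$) the chain rule reduces the claim to
\[
\Re\!\left(1+(\zeta-a)\frac{\varphi''(\zeta)}{\varphi'(\zeta)}\right)>0\quad\text{for every }\zeta\in\Delta.
\]

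\textbf{Key step.} To exploit convexity of $\varphi(\D)$, I would use that $p(\zeta):=1+\zeta\varphi''(\zeta)/\varphi'(\zeta)$ has positive real part on $\D$ with $p(0)=1$, hence admits a Herglotz representation
\[
p(\zeta)=\int_{0}^{2\pi}\frac{e^{i\theta}+\zeta}{e^{i\theta}-\zeta}\,d\mu(\theta)
\]
for a probability measure $\mu$ on $[0,2\pi)$. Solving for $\varphi''/\varphi'$ and substituting yields
\[
1+(\zeta-a)\frac{\varphi''(\zeta)}{\varphi'(\zeta)}=\int_{0}^{2\pi}\frac{e^{i\theta}+\zeta-2a}{e^{i\theta}-\zeta}\,d\mu(\theta).
\]
Setting $u=\zeta-a$ and $v=e^{i\theta}-a$, the integrand equals $(v+u)/(v-u)$, whose real part simplifies via $(v+u)(\bar v-\bar u)=|v|^2-|u|^2+2i\,\Im(u\bar v)$ to the elementary expression $(|v|^2-|u|^2)/|v-u|^2$. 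Since $\D(a,r)\subset\D$ forces $|a|+r\le 1$, one has $|v|\ge 1-|a|\ge r>|u|$ for every $\zeta\in\Delta$ and every $\theta$, so the integrand has strictly positive real part pointwise, and integrating against the probability measure $\mu$ gives the required inequality and hence convexity of $\varphi(\Delta)$.

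\textbf{Main obstacle.} The subtle point is that the target inequality cannot be obtained by naively combining $\Re(1+\zeta\varphi''/\varphi')>0$ with an independent estimate on $a\varphi''(\zeta)/\varphi'(\zeta)$, since the latter term has no built-in sign; the role of the Herglotz representation is precisely to convert the global positivity on $\D$ into the pointwise kernel identity $\Re((v+u)/(v-u))=(|v|^2-|u|^2)/|v-u|^2$, which is then controlled by the geometric fact that any disk $\D(a,r)$ lying inside $\D$ satisfies $|a|+r\le 1$.
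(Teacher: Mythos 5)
Your proof is correct: the reduction via the affine map $\psi(z)=a+rz$ to the criterion $\Re\bigl(1+(\zeta-a)\varphi''(\zeta)/\varphi'(\zeta)\bigr)>0$ on $\D(a,r)$ is valid (local univalence of $\varphi\circ\psi$ is inherited from $\varphi$), the Herglotz representation and the kernel computation $\Re\frac{v+u}{v-u}=\frac{|v|^2-|u|^2}{|v-u|^2}$ are right, and $|a|+r\le 1$ indeed gives $|v|\ge 1-|a|\ge r>|u|$. The paper itself gives no proof of this lemma, citing only Study and Robertson; your argument is essentially the standard classical proof (the analytic convexity criterion plus the Herglotz representation) behind those citations, so there is nothing to object to.
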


%\begin{proof}
%When $\Delta=\D(0,r)$ for some $0<r<1,$ this is an old result of Study
%(Give a reference. See Duren, for instance).
%If $\Delta=\D(c,r)$ with $|c|+r<1,$ then we take a suitable disk automorphism
%$T:\D\to\D$ and a number $0<\rho<1$ so that $T(\D(0,\rho))=\Delta.$
%Now we apply the first case to the map $\varphi\circ T$ to obtain the assertion.
%Finally, if $\Delta=\D(c,r)$ with $|c|+r=1,$ we consider the sequence
%$\Delta_n=\D(c,r-r/n)$ for $n=2,3,\dots.$
%Then $D_n=\varphi(\Delta_n)$ is an increasing sequence of convex domains 
%by the previous case.
%Since $D=\varphi(\Delta)=\cup_n D_n,$ the domain $D$ is also convex.
%\end{proof}

\begin{proof}[Proof of Theorem \ref{thm:unbdd}]
Let $D$ be an unbounded convex domain in $\C$ with $D\ne\C$ and
$G:D\to\C$ be an infinitesimal generator of a semigroup $(F_t)$ on $D.$
Furthermore, we assume that the Denjoy-Wolff point $\sigma$ of $(F_t)$ is finite. We need to show that $G$ admits nonlinear resolvents $J_t$ on $D$ for all $t\ge 0$.

First, let $\varphi:\D\to D$ be a conformal homeomorphism.
As $D$ is convex, it is a Jordan domain in the Riemann sphere unless $D$ is a parallel strip, see \cite[Corollary 2]{K87}.
Therefore, if $D$ is a Jordan domain, $\varphi$ extends to a homeomorphism $\overline\D\to \overline D$ by
a theorem of Carath\'eodory.
Even when $D$ is a parallel strip; i.e., $\varphi=A\log\frac{1+\psi}{1-\psi}+B$ for some disk automorphism $\psi$ and constants $A, B$ with $A\ne0$, $\varphi$ extends to a continuous map $\overline\D\to \overline D$.

We now consider the conjugated semigroup 
$\hat F_t=\varphi^{-1}\circ F_t\circ\varphi$
and denote by $\hat G$ its generator.
Then the Denjoy-Wolff point $\tau$ of $(\hat F_t)$ is mapped to $\sigma$ by $\varphi.$

\medskip
\noindent
Case 1: $\sigma\in D$:
Then $\tau\in\D.$
Let $D_\rho=\varphi(\Delta(\tau,\rho))$ for $0<\rho<1.$
Note that $D_\rho$ is a bounded convex domain by Lemma \ref{lem:convex}.
Then Lemma \ref{lem:SPDW} now implies that $F_t(D_\rho)\subset D_\rho$ for each $\rho$. (If $F_t$ is the identity for some $t$, then trivially $F_t(D_\rho)\subset D_\rho$.)
This enables us to regard $(F_t)$ as a semigroup on the bounded convex domain $D_\rho.$
By Theorem \ref{thm:RS}, we obtain the unique nonlinear resolvents $z=J_t(w, D_\rho)$ on 
$D_\rho$ for all $t\ge 0$ and $\rho\in(0,1);$ 
that is to say, there is a holomorphic
map $z=J_t(w, D_\rho)$ of $D_\rho$ into itself satisfying
the equation $w=z-t\cdot G(z)$ for each $w\in D_\rho.$
Since $D=\cup_{0<\rho<1}D_\rho,$ we readily see that
the maps $z=J_t(w,D_\rho)$ piece together to a holomorphic map $z=J_t(w)$ satisfying
the equation $w=z-t\cdot G(z)$ for each $w\in D.$
Thus the assertion has been shown in this case.

\medskip
\noindent
Case 2: $\sigma\in\partial D$:
In this case we have $\tau\in\partial\D$ so that we use $E(\tau, R)$ instead.
Let $D_R=\varphi(E(\tau,R))$ for $R>0.$
Then Lemma \ref{lem:SPDW} again implies that $F_t(D_R)\subset D_R$ for each $R.$
Since $\varphi(\tau)=\sigma$ is finite by assumption, $D_R$ is bounded convex
by Lemma \ref{lem:convex}.
We can now show the assertion in the same way as in Case 1.
\end{proof}

\subsection{Parallel strips}\label{sec_strip}

As another interesting example, we consider parallel strips here.
Let $D$ be the standard one defined by $|\Im z|<\pi/2.$
Though the infinite boundary point of $D$ is $\infty,$ just one point, it
represents two prime ends of $D$ as the impression.
That is, the prime end corresponding to the limit $\Re z\to +\infty$
and that corresponding to the limit $\Re z\to -\infty.$
We denote these two prime ends by $+\infty$ and $-\infty,$ respectively.
First we observe the semigroup $F_t(z)=z+t,\ z\in D, t\ge 0.$
Then the generator is $G(z)=1$ and the nonlinear resolvent exists for each $t$
and is given by $J_t(w)=w+t.$
It is the same as $F_t$ incidentally.
Note that the Denjoy-Wolff point is $+\infty.$
%In contrast with the upper half-plane, nonlinear resolvents always exist
%on the parallel strip.
So far, we have a sufficient condition for existence of nonlinear resolvents
on the parallel strips when the Denjoy-Wolff point is at infinity.
More precisely, the Denjoy-Wolff point should be understood as a prime end
when the boundary of the domain is not a Jordan curve.
Without loss of generality, we may assume that the Denjoy-Wolff point is $+\infty$
in the case of the standard parallel strip.

Recall that Theorem \ref{thm:unbdd} covers the case when the Denjoy-Wolff point
is finite.

\begin{theorem}
Let $D=\{z\in\C: |\Im z|<\pi/2\}$ and $G\in\G(D).$
Suppose that the Denjoy-Wolff point $\sigma$ of the semigroup 
generated by $G$ is $+\infty.$
Then $G(z)=e^{-z}q(z)$ for a holomorphic function $q$ with $\Re q\ge0.$
Moreover, the nonlinear resolvents of $G$ exist for all 
$t\in[0,\frac{\pi}{2c}),$ where
$$
c=\inf_{x\in\R} |\Im G(x)|=\inf_{x\in\R} e^{-x}|\Im q(x)|.
$$
\end{theorem}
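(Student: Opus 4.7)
For the first assertion (the form of $G$), I would conjugate to the unit disk via $\varphi(\zeta)=\log\frac{1+\zeta}{1-\zeta}$, which is a biholomorphism from $\D$ onto $D=\{|\Im z|<\pi/2\}$ extending continuously to $\overline\D$ with $\varphi(1)=+\infty$. By Lemma~\ref{lem:vf}, the conjugated semigroup on $\D$ has generator $\hat G(\zeta)=(\varphi'(\zeta))^{-1}G(\varphi(\zeta))$, with Denjoy-Wolff point $\tau=1$. Berkson-Porta (Lemma~\ref{lem:BP}) then gives $\hat G(\zeta)=(1-\zeta)^2 p(\zeta)$ with $\Re p\geq 0$. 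Since $\varphi'(\zeta)=2/(1-\zeta^2)$ and $(1-\zeta)/(1+\zeta)=e^{-z}$ at $z=\varphi(\zeta)$, I obtain
\[
G(z)=\varphi'(\zeta)\hat G(\zeta)=\frac{2(1-\zeta)}{1+\zeta}p(\zeta)=e^{-z}q(z),
\]
where $q(z):=2p(\varphi^{-1}(z))$ satisfies $\Re q\geq 0$.

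For the second assertion, fix $t\in[0,\pi/(2c))$ and let $\Phi_t(z):=z-tG(z)=z-te^{-z}q(z)$. The goal is to show that $\Phi_t$ is univalent on $D$ with $\Phi_t(D)\supseteq D$, so that the resolvent $J_t:=\Phi_t^{-1}|_D\colon D\to D$ exists. A direct computation using $G(z)=e^{-z}q(z)$ gives
\[
\Im\Phi_t(x+i\pi/2)=\pi/2+te^{-x}\Re q(x+i\pi/2)\geq\pi/2,\qquad
\Im\Phi_t(x-i\pi/2)=-\pi/2-te^{-x}\Re q(x-i\pi/2)\leq-\pi/2,
\]
so each of the two boundary lines of $D$ is pushed into the complement of $\overline D$ by $\Phi_t$.

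The constant $c$ enters through the behavior of $\Phi_t$ on the real axis: $\Im\Phi_t(x)=-t\Im G(x)$, and the analyticity of $G$ on the connected set $\R$ forces $\Im G$ to have constant sign (unless identically zero). Thus $|\Im\Phi_t(x)|\geq tc$, and when $tc<\pi/2$ the curve $\Phi_t(\R)$ lies strictly in one half of the strip, uniformly away from $\partial D$. Combining this with the boundary behavior above, I would run a continuity/degree argument in $s\in[0,t]$ starting from $\Phi_0=I$: at each intermediate $s$, the map $\Phi_s$ is univalent (this follows by adapting the Reich-Shoikhet injectivity argument, valid on any convex domain), maps $\partial D$ outside $\overline D$, and, thanks to $sc<\pi/2$, leaves no ``gap'' in the image containing points of $D$. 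To formalize surjectivity I would exhaust $D$ by the invariant convex horocyclic subdomains $D_R=\varphi(E(1,R))$ from the proof of Theorem~\ref{thm:unbdd}, solve the resolvent equation on bounded convex sub-pieces of each $D_R$ via Reich-Shoikhet, and pass to the limit using the uniqueness of the solution and the boundary estimates above.

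The main obstacle is precisely the surjectivity $\Phi_t(D)\supseteq D$ in the unbounded setting. Direct application of Reich-Shoikhet's theorem is impossible because the natural invariant convex subdomains $D_R$ are unbounded, and the strip $D$ has two prime ends at infinity (the Denjoy-Wolff end $+\infty$ where $G\to 0$, and the opposite end $-\infty$ where $|G|=|e^{-z}q|$ generally blows up). A careful degree-theoretic compactification is needed, and the threshold $t=\pi/(2c)$ appears naturally as the first $t$ at which $\Phi_t(\R)$ would touch $\partial D$, potentially obstructing the recovery of $J_t$.
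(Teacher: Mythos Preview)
Your derivation of the form $G(z)=e^{-z}q(z)$ via conjugation to $\D$ and Berkson--Porta is correct and coincides with the paper's argument.

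For the existence of resolvents, however, your outline contains a genuine misreading of the role of $c$. The quantity $c=\inf_{x\in\R}|\Im G(x)|$ is an \emph{infimum}, so the only conclusion it yields is that for each $t<\pi/(2c)$ there exists \emph{some} $x_0\in\R$ with $|\Im G(x_0)|<\pi/(2t)$, hence $w_0:=x_0-tG(x_0)\in D$. Your inequality $|\Im\Phi_t(x)|\ge tc$ says only that $\Phi_t(\R)$ stays away from the real axis; it gives no upper bound on $|\Im\Phi_t(x)|$, and in fact $|\Im G(x)|$ may be unbounded, so $\Phi_t(\R)$ need not lie in the strip at all. Your assertion that analyticity forces $\Im G$ to have constant sign on $\R$ is also false (e.g.\ $q(z)=3+i\sin z$ has $\Re q>0$ on $D$ but $\Im q(x)=\sin x$); when $\Im G$ does change sign one has $c=0$, and the theorem then claims resolvents for \emph{all} $t\ge 0$.

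The paper's proof proceeds quite differently. First, under the extra hypotheses that $q$ extends holomorphically past $\overline D$ and that $q(D)$ lies in a compact subset of the right half-plane, it applies the \emph{argument principle} on the boundary of a large rectangle $|\Re z|\le T$, $|\Im z|\le\pi/2$: your boundary computations on $\Im z=\pm\pi/2$ are one ingredient, and the behavior as $\Re z\to\pm\infty$ (using $G(z)=e^{-z}q(z)$) supplies the rest, giving winding number $1$ and hence a unique zero of $z\mapsto z-tG(z)-w_0$. For general $q$, the paper approximates by $q_\rho(z)=p(\rho\,\varphi^{-1}(z-x_0))$ (which satisfies the extra hypotheses), obtains resolvents $g_\rho$, and passes to a normal-family limit. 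The crucial point---and this is exactly where $c$ enters---is that the anchor identity $g_\rho(w_0)=x_0$ holds for every $\rho$, so the limit map cannot degenerate to $\partial D$. Your proposed exhaustion by horocyclic domains $D_R=\varphi(E(1,R))$ does not help here: as you note, each $D_R$ is unbounded, the semigroup restricted to a bounded convex sub-piece need not be a semigroup on that piece, and without an anchor point there is no mechanism to prevent collapse in the limit.
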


\begin{proof}
The function $\varphi(\zeta)=\log\frac{1+\zeta}{1-\zeta}$ maps $\D$
onto $D.$
%When Denjoy-Wolff point $\sigma$ of the semigroup generated by $G$ is finite,
%the assertion follows from Theorem \ref{thm:unbdd}.
%Thus it is enough to show in the case when $\sigma=\infty.$
By assumption, the point $\tau=1$ corresponds to $\sigma=+\infty$ under the mapping $\varphi.$
Then, by Lemmas \ref{lem:BP} and \ref{lem:vf}, $\hat G=G\circ\varphi/\varphi'$
has the form $(1-\zeta)^2p(\zeta),$ where $p$ is a holomorphic function on $\D$
with $\Re p\ge 0.$
Hence, for $z=\varphi(\zeta),$ we obtain
$$
G(z)=\hat G(\zeta)\varphi'(\zeta)
=2\frac{1-\zeta}{1+\zeta}\,p(\zeta)
=e^{-z}q(z).
$$
Here, we put $q(z)=2p(\varphi^{-1}(z)).$
For a while, we assume that $q$ satisfies the additional conditions
\begin{enumerate}
\item[(i)]
$q(z)$ is holomorphic on an open set
containing the closed parallel strip $|\Im z|\le \pi/2,$ and
\item[(ii)]
$q$ satisfies the inequality $|q-1|\le k|q+1|$ on $D$ for a constant
$0<k<1.$
\end{enumerate}

Condition (ii) says that the image $q(D)$ is contained in a compact
subset of the right half-plane $\Re w>0$.

Under these conditions, we will show that the nonlinear resolvents
of $G$ exist for all $t\ge 0.$
For a given point $w_0\in D$ and a positive number $t,$ we consider the function
$w=f(z)=z-tG(z)-w_0.$
We need to show that $f$ has a unique zero in $D.$
Note that $|\Im w_0|<\pi/2.$
First we observe that for $x\in\R,$
$
f(x+i\pi/2)=x+i\pi/2+it e^{-x}q(x+i\pi/2)-w_0
$
so that $\Im f(z)=\pi/2-\Im w_0+t e^{-x}\Re q(z)>0$ on $\Im z=\pi/2.$
Similarly, we see that $\Im f(z)<0$ on $\Im z=-\pi/2.$
When $x=\Re z\to+\infty$ in $D,$ we have $\Re f(z)=x+O(1).$
In particular, $\Re f(z)>0$ if $\Re z$ is large enough.
When $x=\Re z\to-\infty$ in $D,$ we observe
$$
\arg(-f(z))=\arg e^{-z}+\arg(q(z)-e^z(z-w_0)/t)
=-\Im z+\arg(q(z)+o(1)).
$$
Note that $|\arg q|\le 2\arctan k<\pi/2$ on $D$ by assumption.
We fix a number $\alpha$ so that $2\arctan k<\alpha<\pi/2.$
If $-x=-\Re z$ is positive and large enough, then
$|\arg (-f(z))|<\pi/2+\alpha<\pi.$
Now we take the boundary of the rectangle $|\Re z|\le T$
and $|\Im z|\le\pi/2$ as a contour $\Gamma$ for a large enough $T>0.$
Then we see that
$$
\frac1{2\pi i}\int_\Gamma \frac{f'(z)}{f(z)}dz
=\frac1{2\pi}\int_{f(\Gamma)} d\arg w=1
$$
as required.

Next we consider the general $G(z)=e^{-z}q(z)$ and let $0<t<\pi/(2c)$ so that $c<\pi/(2t).$
We first assume that $\Re q>0.$
%Take a number $c'$ so that $c<c'\le\pi/(2t).$
Then by the definition of $c,$ we can find a real number $x_0$
so that $|\Im G(x_0)|<\pi/(2t).$
Thus $w_0=x_0-tG(x_0)\in D.$
Let $p(\zeta)=q(\varphi(\zeta)+x_0).$
Then $p$ is holomorphic and satisfies $\Re p>0$ on $\D.$
For $0<\rho<1,$ we set $G_\rho(z)=e^{-z}q_\rho(z),$
where $q_\rho(z)=p(\rho\varphi^{-1}(z-x_0)).$
Then $G_\rho$ satisfies the conditions (i) and (ii) and $G_\rho(x_0)=G(x_0)$
so that we can find the nonlinear resolvent $g_\rho(w)=J_t(w,G_\rho)$
for each $\rho.$
Note that $g_\rho:D\to D$ is holomorphic and satisfies $g_\rho(w_0)=x_0.$
Since $D$ excludes a disk, 
$(g_\rho)$ is a normal family by the Montel theorem.
Hence, there exists a locally uniform
limit $g=\lim g_{\rho_j}:D\to \overline D$ 
for a suitable sequence $\rho_j\to 1.$
Since $g(w_0)=x_0\in D,$ we see that $g$ maps $D$ into itself.
By definition of the nonlinear resolvents, the functions $g_\rho$
satisfy the relation
$$
w=g_\rho(w)-tG_\rho(g_\rho(w)),\quad w\in D.
$$
We now take the limit through the sequence $\rho_j$ to get
the relation $w=g(w)-tG(g(w)).$

Next we consider the case $\Re q=0$ at some point in $D.$
Then $q=ia$ for a real constant $a.$
In this case, we consider $q_\rho=\rho+ia$ for a positive number $\rho>0.$
Then the same argument works as $\rho\to 0.$
Hence, we have obtained the nonlinear resolvent $g(w)=J_t(w,G).$
\end{proof}

\section{Decreasing Loewner chains}\label{sec_deLo}

\begin{definition}
Let $D\subset\C^n$ be a domain. 
 A family $(f_t)_{t\geq0}$ of univalent mappings $f_t:D\to D$ is called a \textit{decreasing Loewner chain} if $f_0$ is the identity, 
 $f_t(D)\subset f_s(D)$ whenever $s\leq t$, and $t\mapsto f_t$ is continuous in the topology of locally uniform convergence on $D$. 
\end{definition}

Decreasing Loewner chains can be obtained by solving a certain Loewner PDE. First, we need the following definition, which was introduced in 
  \cite{MR2507634} for general complete (Kobayashi) hyperbolic manifolds. We will only consider complete  hyperbolic subdomains of $\C^n$. For example, every bounded and convex domain is complete hyperbolic, see \cite[Theorem 1.1]{BS09}.
	
\begin{definition}\label{def_hvf}
Let $D\subset \C^n$ be a complete hyperbolic domain. 
 A \textit{Herglotz vector field} of order $d\in[1,+\infty]$ on $D$ is a mapping $G:[0,\infty)\times D \to \C^n$ with the following properties:
\begin{itemize}
 \item[(1)] The function $t\mapsto G(t, z)$ is measurable on $[0,\infty)$ for all $z\in D.$
\item[(2)] For any compact set $K\subset D$ and $T>0,$ there exists a function $C_{T,K}\in L^d([0,T],\R^+_0)$ such that $\|G(t,z)\|\leq C_{T,K}(t)$ for all $z\in K$ and for almost all $t\in[0,T].$ 
\item[(3)] The function $z\mapsto G(t, z)$ is an infinitesimal generator for almost every $t\in[0,\infty).$
\end{itemize}
A Herglotz vector field of order $1$ will simply be called a Herglotz vector field.
\end{definition}
\begin{remark}
In \cite{MR2507634} the definition of a Herglotz vector field involves a condition on the function $G(\cdot,t)$ (for almost all $t$) using the Kobayashi metric of $D$ instead of property (3). In \cite[Theorem 1.1]{MR2887104}, however, it was shown that this condition is equivalent to condition (3).
\end{remark}

\begin{theorem}\label{thm_1}
Let $G(t,z)$ be a Herglotz vector field on a complete hyperbolic domain $D$ in $\C^n$.
Then there exists a unique solution $(f_t)_{t\geq 0}$ of holomorphic functions on $D$, such that $t\mapsto f_t(z)$ is locally absolutely continuous for every $z\in D$, to
\begin{equation}\label{inv_equ}
 \frac{\partial}{\partial t}f_t(z)=f'_t(z)\cdot G(t,z) \quad \text{for a.e.\ $t\geq 0$ and all $z\in D$}, \quad f_0(z)=z\in D.
\end{equation}
The solution $(f_t)_{t\geq 0}$ is a decreasing Loewner chain on $D$.
\end{theorem}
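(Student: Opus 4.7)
My plan is to reduce the theorem to the standard existence theory for evolution families of Herglotz vector fields due to Bracci--Contreras--D\'{\i}az-Madrigal \cite{MR2507634, MR2887104} via a time-reversal trick. Fix a horizon $T>0$ and define $\tilde G(u,z):=G(T-u,z)$ on $[0,T]\times D$. All three conditions in Definition \ref{def_hvf} are invariant under the involution $u\leftrightarrow T-u$, so $\tilde G$ is again a Herglotz vector field on $D$. The cited theorem then produces an associated evolution family $\tilde\varphi_{s,u}:D\to D$, $0\le s\le u\le T$, of univalent holomorphic self-maps with $\tilde\varphi_{s,s}=I$, cocycle $\tilde\varphi_{s,u}\circ\tilde\varphi_{r,s}=\tilde\varphi_{r,u}$ for $r\le s\le u$, and forward equation $\partial_u\tilde\varphi_{s,u}(z)=\tilde G(u,\tilde\varphi_{s,u}(z))$ for a.e.\ $u$. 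I then set $f_t(z):=\tilde\varphi_{T-t,T}(z)$ for $t\in[0,T]$.

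By construction $f_0=\tilde\varphi_{T,T}=I$, each $f_t$ is a univalent self-map of $D$, and $t\mapsto f_t$ is continuous in the topology of locally uniform convergence on $D$ by the joint continuity of the evolution family. The decreasing-chain property is immediate from the cocycle: for $s\le t\le T$,
\[f_t(D)=\tilde\varphi_{T-s,T}\bigl(\tilde\varphi_{T-t,T-s}(D)\bigr)\subset\tilde\varphi_{T-s,T}(D)=f_s(D).\]
For the PDE, I combine the cocycle $\tilde\varphi_{T-t-h,T}(z)=\tilde\varphi_{T-t,T}\bigl(\tilde\varphi_{T-t-h,T-t}(z)\bigr)$ with the integral form
\[\tilde\varphi_{T-t-h,T-t}(z)=z+\int_{T-t-h}^{T-t}G(T-u,\tilde\varphi_{T-t-h,u}(z))\,du.\]
For a.e.\ $t$ the right-hand side equals $z+hG(t,z)+o(h)$ by the Lebesgue differentiation theorem together with the uniform-on-compacta bound on $G$ and the continuity of the inner $\tilde\varphi$. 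Expanding $\tilde\varphi_{T-t,T}$ around $z$ then yields \eqref{inv_equ}, namely $\partial_t f_t(z)=f'_t(z)\cdot G(t,z)$ for a.e.\ $t$, and the same integral representation gives local absolute continuity of $t\mapsto f_t(z)$.

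For uniqueness, let $(\hat f_t)$ be another solution. Given $t_0\in[0,T]$ and $z_0\in D$, let $\zeta:[0,t_0]\to D$ solve the Carath\'eodory ODE $\dot\zeta(u)=G(t_0-u,\zeta(u))$ with $\zeta(0)=z_0$; it stays in $D$ throughout $[0,t_0]$ since $u\mapsto G(t_0-u,\cdot)$ is itself a Herglotz vector field. Setting $z^*(s):=\zeta(t_0-s)$ yields $z^*(t_0)=z_0$ and $\dot z^*(s)=-G(s,z^*(s))$ for a.e.\ $s$. A standard chain-rule lemma for the composition of an absolutely continuous curve with a holomorphic-in-$z$, absolutely-continuous-in-$t$ family then shows that $\phi(s):=\hat f_s(z^*(s))$ is absolutely continuous with
\[\dot\phi(s)=\hat f'_s(z^*(s))\cdot G(s,z^*(s))-\hat f'_s(z^*(s))\cdot G(s,z^*(s))=0\quad\text{a.e.,}\]
so $\phi\equiv\phi(0)=z^*(0)$. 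Hence $\hat f_{t_0}(z_0)=z^*(0)$, and the same formula applied to $f$ gives $\hat f\equiv f$ on $[0,T]$. Uniqueness also forces the constructions on different horizons $T$ to agree on their overlap, so $(f_t)_{t\ge 0}$ is defined globally. The principal technical point is this chain-rule lemma: since $t\mapsto\hat f_t(z)$ is only locally absolutely continuous for each fixed $z$, differentiating along the Lipschitz curve $z^*$ requires a uniform-in-compacta absolute-continuity estimate for the holomorphic family $\{\hat f_t\}$, obtained by combining condition (2) in Definition \ref{def_hvf} with the Cauchy estimates.
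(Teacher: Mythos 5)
Your proposal is correct and follows essentially the same route as the paper: both reduce the problem, for each fixed horizon $T$, to the evolution family of the time-reversed Herglotz vector field $G(T-\cdot,\cdot)$ via \cite{MR2507634}, define $f_t$ as $g_{T-t,T}$ (your $\tilde\varphi_{T-t,T}$), read off the decreasing-chain property from the cocycle, and prove uniqueness by showing any solution is constant along the backward characteristics $\dot z^*(s)=-G(s,z^*(s))$, which is exactly the paper's computation with $g_{0,T-t}(z)$. The only differences are cosmetic (you obtain the PDE from Lebesgue points of the integral form rather than by differentiating the cocycle identity in its first variable and extending holomorphically), so no further comment is needed.
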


\begin{remark}A standard argument in Loewner theory shows that the solution $(f_t)$ satisfies \eqref{inv_equ} for all $z\in D$ and all $t\in[0,\infty)\setminus N$ for a set $N\subset [0,\infty)$ of Lebesgue measure $0$ which is independent of $z$, see \cite[Proposition 5.1]{ABHK13}.
\end{remark}

%\begin{remark}
%A standard argument in Loewner theory shows that \eqre{inv_equ} holds for all $z\in D$ and all $t\in [0,\infty)\setminus N$ for a subset $E\subset[0,\infty)$ of Lebesgue measure $0$:\\
%Choose a countable sequence $(z_n)_n\subset D$ which is dense in $D$. For each $z_n$ there is a set $E_n\subset[0,\infty)$ of Lebesgue measure $0$  such that \eqref{inv_equ} holds for all $t\in [0,\infty)\setminus E_n$. Thus also $E=\cup_n E_n$ has Lebesgue measure $0$ and \eqref{inv_equ} holds for all $t\in [0,\infty)\setminus E$ and all $z_n$. 
%\end{remark}
%Let $$

\begin{proof}We follow \cite[Theorem 3.2]{CDMG14}, where decreasing Loewner chains are handled by studying evolution families (arising from increasing Loewner chains).\\

Fix $T>0$. For $0\leq s < T$, we consider the Loewner equation
\begin{equation*}
 \frac{\partial}{\partial t}g_{s,t}(z)= G(T-t, g_{s,t}(z)) \quad \text{for a.e. $t\in[s,T]$, \quad} g_{s,s}(z)= z \in D,
\end{equation*}
which has a unique solution of univalent mappings $g_{s,t}:D\to D$ with $g_{s,s}(z)\equiv z$, 
$g_{s,t}=g_{u,t}\circ g_{s,u}$ for all $0\leq s\leq u \leq t \leq T$ and for any compact subset $K\subset D$ and for any $0<S\leq T$ there exists a non-negative function $k_{K,S}\in L^1([0,S],\R^+_0)$ such that for all $0\leq s\leq u\leq t\leq S$ and for all $z\in K,$
$$ \|g_{s,u}(z)-g_{s,t}(z)\| \leq \int_u^t k_{K,S}(\tau)\, d\tau, $$
see \cite[Def. 1.2, Prop. 3.1, Prop. 5.1]{MR2507634}. ($g_{s,t}$ is (a part of) an evolution family of order $1$.) 
In particular $g_{s,T}\circ g_{0,s} = g_{0,T}$. Differentiation with respect to $s$ gives 
\[
\left.\frac{\partial}{\partial u} g_{u,T}(g_{0,s}(z))\right\vert_{u=s} + g_{s,T}'(g_{0,s}(z)) \cdot G(T-s, g_{0,s}(z)) = 0,
\]
i.e. \[\left.\frac{\partial}{\partial u} g_{u,T}(w)\right\vert_{u=s} = -  g_{s,T}'(w) \cdot G(T-s, w)\]
for almost every $s$ and all $w\in g_{0,s}(D)$. However, the right side (and thus its integral w.r.t $s$) can be extended holomorphically to $D$ and thus 
$\frac{\partial}{\partial s} g_{s,T}(w)$ satisfies the above equation for all $w\in D$. 

Now define $f_t = g_{T-t,T}$, $0\leq t\leq T$. 
We have $f_t = g_{T-t,T} = g_{T-s, T} \circ g_{T-t,T-s}=f_s \circ g_{T-t,T-s}$ whenever $s\leq t$. Hence $(f_t)_{0\leq t\leq T}$ is (a part of) a decreasing Loewner chain. We have 
\begin{equation*}
 \frac{\partial}{\partial t}f_t(z)=  g_{T-t,T}'(z) \cdot G(t, z) =   f'_t(z) \cdot G(t, z)\quad \text{for a.e.\ $t\in[0,T]$}, \quad f_0(z)= z \in D. 
\end{equation*}
 By choosing another $\hat{T}>T$, we obtain a family $(\hat{g}_{s,t})_{0\leq s\leq t\leq\hat{T}}$ with $\hat{g}_{s,\hat{T}} 
=g_{s+T-\hat{T},\hat{T}+T-\hat{T}}=g_{s+T-\hat{T},T}$ for all $s\in [\hat{T}-T, \hat{T}]$. Hence 
$\hat{f}_t := \hat{g}_{\hat{T}-t,\hat{T}} = g_{T-t,T}=f_t$ for all $t \in [0, T]$. 

As we can choose $T>0$ arbitrarily large, we conclude that there exits a decreasing Loewner chain $(f_t)_{t\geq 0}$ satisfying \eqref{inv_equ}.\\

Finally, let $(J_t)_{t\geq0}$ be a family of holomorphic mappings on $D$, $t\mapsto J_t(z)$ locally absolutely continuous for every $z\in D$, satisfying \eqref{inv_equ}. Let $T>0$ and define 
$g_{s,t}$ as above. We have \[\frac{\partial}{\partial t}[(J_t(g_{0,T-t}))(z)] = J_t'(g_{0,T-t}(z))\cdot G(t,g_{0,T-t}(z)) - 
J_t'(g_{0,T-t}(z))\cdot G(t, g_{0,T-t}(z)) = 0\] 
for a.e.\ $t\in[0,T]$ and thus $J_t(g_{0,T-t}(z))=J_0(g_{0,T}(z))=g_{0,T}(z)$ for all $z\in D$ and $t\in[0,T]$. This implies $J_t = g_{0,T} \circ g^{-1}_{0,T-t}$ on $g_{0,T-t}(D)$. As $g_{0,T-t}(D)$ is an open set, the identity theorem implies 
$J_t = f_t$ on $D$. So the solution $(f_t)$ to \eqref{inv_equ} is unique.
\end{proof}

Now we turn our attention to nonlinear resolvents.

\begin{lemma}\label{unoml}
Let $D\subset \C^n$ be a bounded and convex domain and let $G\in\G(D)$ with nonlinear resolvents $J_t:D\to D$. Then $t\mapsto J_t$ is continuous with respect to locally uniform convergence.
\end{lemma}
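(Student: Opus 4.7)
The plan is to establish pointwise continuity of $t \mapsto J_t(w)$ via the holomorphic implicit function theorem, and then upgrade to locally uniform convergence using normality.

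First I would verify the invertibility needed to apply the implicit function theorem. For each $t \geq 0$ and $w \in D$, set $z = J_t(w) \in D$ and differentiate the defining identity $z - t G(z) = w$ with respect to $w$ to obtain $(I - t G'(z))\, J_t'(w) = I$. In finite dimensions this means that $I - t G'(z)$ is invertible at every point $z$ lying in the image of $J_t$, which is the key ingredient for the implicit function theorem.

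Next, fix $t_0 \geq 0$ and $w_0 \in D$, and set $z_0 = J_{t_0}(w_0)$. The map $\Phi(t,z) = z - t G(z)$ is holomorphic in $z$ and affine in $t$, and $\partial_z \Phi(t_0, z_0) = I - t_0 G'(z_0)$ is invertible by the previous step. The holomorphic implicit function theorem yields neighborhoods $U$ of $t_0$ in $[0,\infty)$ and $V$ of $w_0$ in $D$, together with a continuous map $\psi : U \times V \to D$ with $\psi(t_0, w_0) = z_0$ and $\Phi(t, \psi(t,w)) = w$. Because $D$ is bounded and convex, the resolvent equation $w = z - t G(z)$ has a unique solution in $D$ (see Section \ref{sec_semi}), so $\psi(t,w) = J_t(w)$ throughout $U \times V$. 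In particular $J_t(w_0) \to J_{t_0}(w_0)$ as $t \to t_0$, giving pointwise continuity at every $w_0 \in D$.

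To upgrade to locally uniform convergence I would invoke normality. Since $D$ is bounded and $J_t(D) \subset D$, the family $\{J_t\}_{t \geq 0}$ is uniformly bounded on $D$, hence normal by Montel's theorem. Given any sequence $t_n \to t_0$, every subsequence of $(J_{t_n})$ has a further subsequence converging locally uniformly on $D$, whose limit must coincide with $J_{t_0}$ by the pointwise continuity established above. Hence the whole sequence $(J_{t_n})$ converges locally uniformly to $J_{t_0}$. The only mildly delicate point is the appeal to uniqueness of the resolvent on the bounded convex domain $D$ to identify the locally defined $\psi$ with the globally defined $J_t$; everything else is a routine application of standard tools.
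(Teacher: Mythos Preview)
Your proof is correct, but it follows a different route from the paper's. The paper avoids the implicit function theorem entirely. Writing $\varphi_t(z)=z-tG(z)$ and $w=J_t(z)$ for $z$ in a compact $K\subset D$, the paper uses the identity
\[
J_t(z)-J_s(z)=J_s(\varphi_s(w))-J_s(z),
\]
valid because uniqueness of the resolvent forces $J_s(\varphi_s(w))=w$ whenever $\varphi_s(w)\in D$. Since $\varphi_s(w)-z=\varphi_s(w)-\varphi_t(w)=(t-s)G(w)$, and since boundedness of $D$ makes $\{J_s'\}_{s\ge0}$ locally uniformly bounded (Cauchy estimates), the right-hand side is controlled by $|t-s|$ times a constant depending only on $K$. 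This yields locally uniform continuity directly, with an explicit modulus.

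Your argument---pointwise continuity via the implicit function theorem, then upgrading through Montel and a subsequence argument---is more conceptual and arguably more routine, and it would adapt easily if one wanted, say, $C^1$ dependence on $t$. The paper's argument is shorter and gives a quantitative estimate. Both proofs lean on the same uniqueness fact from Section~\ref{sec_semi}: you use it to identify the local implicit branch $\psi$ with the global $J_t$, while the paper uses it to justify $J_s(\varphi_s(w))=w$.
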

\begin{proof}As $D$ is bounded, $\{J_t\}_{t\geq0}$ and $\{J'_t\}_{t\geq0}$ are both normal families and $\{J'_t\}_{t\geq0}$ is locally uniformly bounded.\\ 

 Put $\varphi_t(z)=z-tG(z)$. Let $t\geq 0$ and $K\subset D$ be compact. We need to show that $J_s$ converges uniformly on $K$ to $J_t$ as $s\to t$.\\

 The set $J_t(K)$ is a compact subset of $D$ and we find a second compact set $L\subset D$ with $K\subset L$ and $\eps>0$ such that $\varphi_s(J_t(K))\subset L$ for all $s\in[t-\eps,t+\eps]\cap[0,\infty)$ due to locally uniform convergence of $\varphi_s$ to $\varphi_t$ as $s\to t$. \\
Let $z\in K$ and $s\in[t-\eps,t+\eps]\cap[0,\infty)$ and put $w=J_t(z)$. Then $\varphi_s(w)\in L$. 
We have 
\[ J_t(z) - J_s(z)  = J_s(\varphi_s(w))  - J_s(z).\]
From $\varphi_s(w)-z = \varphi_s(w) -\varphi_t(w)$ and the fact that $\{J'_t\}_{t\geq0}$ is locally uniformly bounded, we see that $J_s$ converges to $J_t$ uniformly on $K$ as $s\to t$.
\end{proof}

The following theorem proves Theorems \ref{intro_1} and \ref{intro_1b}.

\begin{theorem}\label{result1}${}$
\begin{itemize}
\item[(1)] Let $D\subset \C^n$ be a bounded and convex domain and let $G\in\G(D)$ with nonlinear resolvents $J_t:D\to D$. Then $(J_t)_{t\geq0}$ 
is a decreasing Loewner chain satisfying the Loewner partial differential equation
\begin{equation}\label{m}\frac{\partial}{\partial t}J_t(w)=J_t'(w)\cdot G(J_t(w))\quad \text{for all $t\geq 0$}, \quad J_0(w)= w\in D.\end{equation}
The domains $J_t(D)$ contract to the zero set of $G$, i.e. $\bigcap_{t\geq 0} J_t(D) = G^{-1}(\{0\})$. (The set $G^{-1}(\{0\})$ might be empty.)
\item[(2)] Let $D\subset \C^n$ be a (possibly unbounded) convex domain and let $G\in\G(D)$. 
Furthermore, let $J_t:D\to D$ be a family of holomorphic functions satisfying \eqref{m}. Then $J_t$ are the nonlinear resolvents of $G$ on $D$.
\item[(3)] Let $D\subset \C^n$ be a bounded and convex domain and let $G:[0,\infty)\times D\to \C^n$ be a differential Herglotz vector field on $D$. Let $H_t(z):=\int_0^t G(s,z) ds$ and 
%such that $z\mapsto G(t,z)$ is holomorphic for a.e.\ $t\geq0$, $t\mapsto G(t,z)$ is locally integrable for all $z\in D$, and $H_t(z):=\int_0^t G(s,z) ds$ 
%is an infinitesimal generator on $D$ for every $t\geq 0$. 
 $K_t:D\to D$ be the nonlinear resolvent of $H_t$ at time $1$, i.e.\ the inverse of $z\mapsto z- H_t(z)$. Then $t\mapsto K_t(z)$ is locally absolutely continuous for every $z\in D$, $t\mapsto K_t$ is continuous with respect to locally uniform convergence and there exists a set $N\subset[0,\infty)$ of Lebesgue measure $0$ such that
\begin{equation}\label{m2}\frac{\partial}{\partial t}K_t(z)=K_t'(z)\cdot G(t,K_t(z)) \quad \text{for all \ $t\in[0,\infty)\setminus N$ and all $z\in D$}, \quad K_0(z)= z.\end{equation}
Let $(f_t)_{t\geq 0}$ be a family of holomorphic mappings $f_t:D\to D$ satisfying \eqref{m2}, such that $t\mapsto f_t(z)$ is locally absolutely continuous for every $z\in D$ and $t\mapsto f_t$ is continuous with respect to locally uniformly convergence. Then $f_t=K_t$ for all $t\geq 0$.
\end{itemize}
\end{theorem}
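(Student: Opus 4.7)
The plan is to exploit the implicit resolvent identity $J_t(w) = w + tG(J_t(w))$ (and its analog $K_t(z) = z + H_t(K_t(z))$ in part (3)) throughout. For the decreasing Loewner chain property, I would fix $z = J_t(w) \in J_t(D)$, so that $w = z - tG(z)$, and observe that for $0 \leq s \leq t$, convexity of $D$ gives
\[
z - sG(z) = (1 - s/t)\,w + (s/t)\,z \in D,
\]
whence $z = J_s(z - sG(z)) \in J_s(D)$. Continuity of $t \mapsto J_t$ in locally uniform convergence is Lemma \ref{unoml}. Differentiating the resolvent identity in $w$ yields $(I - tG'(J_t)) J_t' = I$ (so in particular $J_t'$ is invertible), and differentiating in $t$ yields $\partial_t J_t = J_t' \cdot G(J_t)$, which is \eqref{m}. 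The intersection formula follows because $z \in J_t(D)$ iff $z - tG(z) \in D$; boundedness of $D$ forces $G(z) = 0$ whenever $z \in \bigcap_{t \geq 0} J_t(D)$, while conversely $G(z) = 0$ makes $z$ a fixed point of every $J_t$.

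\textbf{Part (2).} The core idea is a transport argument applied to the defect
\[
\Xi_t(w) := J_t(w) - tG(J_t(w)) - w,
\]
which satisfies $\Xi_0 \equiv 0$. A direct calculation using \eqref{m} produces the linear transport equation
\[
\partial_t \Xi_t(w) = \partial_w \Xi_t(w) \cdot G(J_t(w)).
\]
I would then apply the method of characteristics to $\dot w(t) = -G(J_t(w(t)))$, $w(0) = w_0 \in D$: along this characteristic, both $\zeta(t) := J_t(w(t))$ and $\Xi_t(w(t))$ have vanishing derivative in $t$, so $\zeta(t) \equiv w_0$, making the ODE $\dot w \equiv -G(w_0)$ with the explicit solution $w(t) = w_0 - tG(w_0)$, and $\Xi_t(w(t)) \equiv 0$. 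For $w_0$ ranging over a relatively compact open $U \subset D$ and $t > 0$ small, this line stays in $D$ and its time-$t$ image is an open subset of $D$ on which $\Xi_t = 0$; since $w \mapsto \Xi_t(w)$ is holomorphic and $D$ is connected, the identity theorem gives $\Xi_t \equiv 0$ on $D$. To propagate to all $t$, I would set $A := \{t \geq 0 : \Xi_t \equiv 0\}$; this set is closed by continuity of $t \mapsto J_t$ and open by repeating the characteristic construction from any $t_0 \in A$ (with $\Xi_{t_0} \equiv 0$ as new initial data). Connectedness of $[0, \infty)$ then forces $A = [0, \infty)$, and the resolvent identity implies univalence of $J_t$.

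\textbf{Part (3).} Existence of $K_t$ as a univalent self-map of $D$ is Theorem \ref{thm:RS} applied to $H_t \in \G(D)$, and continuity of $t \mapsto K_t$ in locally uniform convergence follows as in Lemma \ref{unoml}, using that $t \mapsto H_t$ is locally Lipschitz by property (2) of Definition \ref{def_dif}. Local absolute continuity of $t \mapsto K_t(z)$ follows from the implicit identity $K_t(z) = z + H_t(K_t(z))$. The PDE \eqref{m2} is obtained by the same two differentiations as in part (1): in $w$, $(I - H_t'(K_t)) K_t' = I$; in $t$ (valid for a.e.\ $t$), $\partial_t K_t = K_t' \cdot G(t, K_t)$. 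For uniqueness, I would apply the characteristic argument of part (2) to $\Xi_t(w) := f_t(w) - H_t(f_t(w)) - w$: the transport equation $\partial_t \Xi_t = \partial_w \Xi_t \cdot G(t, f_t)$ holds for a.e.\ $t$, the explicit characteristic is $w(t) = w_0 - H_t(w_0)$, and the same openness/closedness/connectedness argument yields $f_t \equiv K_t$.

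\textbf{Main obstacle.} The characteristic argument in parts (2) and (3) will be the delicate step: one must verify that short-time characteristics from a fixed relatively compact open subset of $D$ stay in $D$ and sweep out an open subset of $D$, so that the identity theorem (applied to the holomorphic function $\Xi_t$) can upgrade the local vanishing of $\Xi_t$ to vanishing on all of $D$. Combined with closedness of $A$ (routine from continuity of $J_t$ or $f_t$ in $t$) and connectedness of $[0, \infty)$, this yields $\Xi_t \equiv 0$ on $[0, \infty) \times D$, and hence the resolvent identity (and thus univalence) for all $t \geq 0$.
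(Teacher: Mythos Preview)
Your proposal is correct. The core of part~(2)---running characteristics $\dot w = -G(J_t(w))$ to show $J_t(\varphi_t(z)) \equiv z$ along the straight line $\varphi_t(z) = z - tG(z)$, then using the identity theorem---is exactly the paper's argument, just repackaged via the defect $\Xi_t = \varphi_t(J_t(w)) - w$; your open/closed set $A = \{t:\Xi_t\equiv 0\}$ plays the same role as the paper's analysis of the sets $D_T = \{z : z - TG(z) \in D\}$. There are two genuine differences elsewhere. In part~(1), you obtain the nesting $J_t(D) \subset J_s(D)$ directly from convexity of $D$, whereas the paper takes a longer route: it first verifies via Lemma~\ref{rm_1} that $(t,z) \mapsto G(J_t(z))$ is a Herglotz vector field and then invokes Theorem~\ref{thm_1}. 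Your argument is more elementary and self-contained; the paper's route has the side benefit of exhibiting \eqref{m} as a genuine Loewner PDE. In part~(3) uniqueness, the paper does \emph{not} rerun the characteristic argument but instead uses that $f_0 = I$ to invert $f_t$ locally on a small ball for small $t$, computes the inverse explicitly as $g_t(w) = w - H_t(w)$, and then runs an open/closed argument in $t$. Your transport approach is more uniform across parts~(2) and~(3), but it does require checking that $t \mapsto f_t(w(t))$ is absolutely continuous with the expected chain-rule derivative along the explicit characteristic $w(t) = w_0 - H_t(w_0)$; this follows from the locally uniform bound on $f_t'$ (normal family, $D$ bounded) together with the common null set $N$ in \eqref{m2}.

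One small gap to fill: in part~(1) you write ``differentiating in $t$ yields $\partial_t J_t = J_t' \cdot G(J_t)$'', which presupposes that $t \mapsto J_t(w)$ is differentiable. The paper establishes this via a careful difference-quotient argument; you can shortcut it by the implicit function theorem, since the Jacobian $I - tG'(J_t(w))$ of $z \mapsto z - tG(z)$ at $z = J_t(w)$ is invertible (this is precisely your equation $(I - tG'(J_t))J_t' = I$).
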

\begin{proof}${}$
\begin{itemize}
\item[(1)] 
Let $J_t$ be the resolvents of $G$. Put $\varphi_t(z)=z-tG(z)$. From $z = (\varphi_t \circ J_t)(z)$ for all $z\in D$, we get by differentiation 
with respect to $z$ that 
\begin{equation}\label{sig}J'_t(z) - t G'(J_t(z)) \cdot J'_t(z)= I, \quad \text{i.e.} \quad J'_t(z) = (I-t G'(J_t(z)))^{-1}.\end{equation}

Now fix $z\in D$ and $t\geq 0$. Then
%If $G(z)=0$, then $J_t(z)=z$ for all $t\geq 0$ and \eqref{m} is clearly satisfied. Now assume 
%$G(z)\not=0$. Then $\varphi_t(z)\not=z$ and thus $J_t(z)\not=z$ for all $t> 0$. Assume that $J_s(z)=J_t(z)$ for some $0\leq s<t$. Then $z=\varphi_s(J_t(z))=J_t(z)-sG(J_t(z))=J_t(z)-tG(J_t(z))+(t-s)G(J_t(z))=z+(t-s)G(J_t(z))$. So $G(J_t(z))=0$ and $z=\varphi_t(J_t(z))=J_t(z)$, a contradiction. So $J_s(z)\not=J_t(z)$ whenever $s\not=t$. 
%Thus we can write

\begin{align*}  0&\,=\,\frac{(\varphi_{t+h} \circ J_{t+h})(z)-(\varphi_{t} \circ J_{t})(z)}{h} \\
&\,=\, 
\frac{(\varphi_{t+h} \circ J_{t})(z)-(\varphi_t \circ J_t)(z)}{h}
+ 
\frac{(\varphi_{t+h} \circ J_{t+h})(z)-(\varphi_{t+h} \circ J_t)(z)}{h} 
\end{align*}
for all $h\not=0$ small enough. As $h\to 0$, the first term converges to $-G(J_t(z))$. Thus also the second term converges.
It can be written as 
\[\frac{(\varphi_{t+h} \circ J_{t+h})(z)-(\varphi_{t+h} \circ J_t)(z)}{h} = \frac{J_{t+h}(z)-J_t(z)}{h} - (t+h)\frac{G(J_{t+h}(z)) - G(J_{t}(z))}{h},\]
which converges if and only if 
\[ \frac{J_{t+h}(z)-J_t(z)}{h} - t\frac{G(J_{t+h}(z)) - G(J_{t}(z))}{h} = \frac{\varphi_t(J_{t+h}(z))-\varphi_t(J_t(z))}{h}\]
converges. In other words, we know that $s\mapsto F(s):=\varphi_t(J_s(z))$ is differentiable at $s=t$. As 
$w\mapsto \varphi_t(w)$ is invertible in a neighborhood of $w=J_t(z)$, and as $s\mapsto J_s(z)$ is continuous, we have 
$J_s(z) = \varphi_t^{-1} \circ F(s)$ for all $s$ sufficiently close to $t$. Thus $s\mapsto J_s(z)$ is differentiable 
at $s=t$. We obtain 
\[\frac{\varphi_t(J_{t+h}(z))-\varphi_t(J_t(z))}{h}\to \varphi_t'(J_t(z)) \cdot \left.\frac{\partial}{\partial s} J_s(z)\right\vert_{s=t}= (I-t G'(J_t(z))) \cdot \left.\frac{\partial}{\partial s} J_s(z)\right\vert_{s=t}\]
as $h\to0$.
%For $h$ small enough, the power series expansion of $\varphi_t$ yields $\varphi_t(J_{t+h}(z))=\varphi_t(J_t(z) + J_{t+h}(z)-J_t(z))=
%\varphi_t(J_t(z))+ \varphi_t'(J_t(z))[J_{t+h}(z)-J_t(z)] + \Landauo(\|J_{t+h}(z)-J_t(z)\|)$.
%So
%\[ \frac{\varphi_t(J_{t+h}(z))-\varphi_t(J_t(z))}{h}= 
%\frac{\varphi_t'(J_t(z))[J_{t+h}(z)-J_t(z)]}{h} + \frac{\Landauo(\|J_{t+h}(z)-J_t(z)\|)}{h}\underset{\eqref{sig}}{=}I-t G'(J_t(z))\]
 Together with \eqref{sig} we obtain 

 %\[0 = (I-t G'(J_t(z)))\frac{\partial}{\partial t}J_t(z) - G(J_t(z)),\] which gives us
\[\frac{\partial}{\partial t}J_t(z)=J'_t(z)\cdot G(J_t(z)) \quad \text{for all $t\geq 0$.}\]

 %Let $h_n$ be a null-sequence such that $J_{t+h_n}(z)\not=J_{t}(z)$ for all $n$. Then 
 %\[0 = (I-t G'(J_t(z)))  \cdot \lim_{n\to\infty} \frac{J_{t+h_n}(z)-J_{t}(z)}{h_n} - G(J_t(z)),\]
%
%and with \eqref{sig}:
%\[ \lim_{n\to\infty}\frac{J_{t+h_n}(z)-J_{t}(z)}{h_n}=J'_t(z)\cdot G(J_t(z)).\]
%Now let $h_n$ be a null-sequence such that $J_{t+h_n}(z)=J_{t}(z)$ for all $n$. Then $\varphi_{t+h_n}(J_t(z))=\varphi_{t}(J_t(z))$ and thus 
%$0=\lim_{n\to\infty}(\varphi_{t+h_n}(J_t(z))-\varphi_{t}(J_t(z)))/h_n\to -G(J_t(z))$. Together we see that $t\mapsto J_t(z)$ is differentiable with
%
 %%\[0 = (I-t G'(J_t(z)))\frac{\partial}{\partial t}J_t(z) - G(J_t(z)),\] which gives us
%\[\frac{\partial}{\partial t}J_t(z)=J'_t(z)\cdot G(J_t(z)).\]

Lemma \ref{unoml} shows that $t\mapsto J_t$ is continuous with respect to locally uniform convergence. 
Due to Lemma \ref{rm_1}, we see that $(t,z)\mapsto G(J_t(z))$ is a Herglotz vector field and thus equation \eqref{m} is a Loewner partial differential equation of the form \eqref{inv_equ}. Hence, Theorem \ref{thm_1} implies that $(J_t)_{t\geq 0}$ is a decreasing Loewner chain.\\
%

%As ($\varphi_t \circ J_t)(z)=z$ for every $z\in D$, each $J_t$ is clearly a univalent function. 
 
%For $z\in D$, the statement $z\in J_t(D)$ is equivalent to $\varphi_t(z)\in D$. Hence, as $D$ is convex, we see that 
%$z\in J_t(D)$ implies $z\in J_s(D)$ for all $s\in[0,t]$. We conclude that $J_t(D)\subset J_s(D)$ whenever $s\leq t$ and that $(J_t)_{t\geq0}$ is a decreasing Loewner chain. Due to Lemma \ref{rm_1}, we see that $(t,z)\mapsto G(J_t(z))$ is a Herglotz vector field and thus equation \eqref{m} is a Loewner partial differential equation of the form \eqref{inv_equ}. 
%
%(We can also argue as follows: $J_t$ solves \eqref{m} and $G\circ J_t$ is a Herglotz vector field. Hence, Theorem \ref{thm_1} implies that $(J_t)_{t\geq 0}$ is a decreasing Loewner chain.)

Let $z\in D$. If $G(z)=0$, then $J_t(z)=z$ for all $t\geq 0$. If $G(z)\not = 0$, then there exists $T>0$ such that $z-t\cdot G(z)\not\in D$ for all 
$t\geq T$ as $D$ is bounded. Hence, $z\not\in J_t(D)$ for all $t\geq T$ and we conclude $\bigcap_{t\geq 0} J_t(D) = G^{-1}(\{0\})$.\\

\item[(2)]
Now let $J_t:D\to D$ be a family of holomorphic functions satisfying \eqref{m}, where $D$ is a convex domain. (Note that now, we do not know yet whether $(t,z)\mapsto G(J_t(z))$ is a Herglotz vector field.)
Consider the differential equation 
\begin{equation}\label{eq_7}\frac{\partial}{\partial t}\varphi_t(z) = -G(J_t(\varphi_t(z))), \quad \varphi_0(z)=z.
\end{equation}

Fix $z\in D$. We can solve this equation at least for $t$ small enough.
A small computation shows that $\frac{d}{dt}[J_t(\varphi_t(z))]=0$, i.e.\ $J_t(\varphi_t(z))$ does not depend on $t$ and $J_t(\varphi_t(z))=J_0(\varphi_0(z))=z$. 
Hence $\frac{\partial}{\partial t}\varphi_t(z) = -G(z)$. We conclude that
$t\mapsto \varphi_t(z)$ simply describes a straight line: \[\varphi_t(z)=z-tG(z).\]
In particular, we can now define $\varphi_t(z)$ for all $z\in D$ and all $t\geq0$ by $\varphi_t(z)=z-tG(z)$. Remark that $\varphi_t(z)$ might not solve \eqref{eq_7} for all $t\geq0$.\\

Let $D_t=\{z\in D\,|\, \varphi_t(z)\in D\}$. The convexity of $D$ implies that $D_t\subset D_s$  whenever $s\leq t$. \\
Let $T>0$ and assume that $D_T$ contains a non-empty open subset. We find such $T$ as $D_0=D$ and due to continuity of $t\mapsto \varphi_t(z)$.  Let $z\in D_T$. Then $z\in D_t$ for all $t\in[0,T]$ and a local solution to \eqref{eq_7} can be extended up to $t=T$:\\ 
If the solution, denote it by $\hat{\varphi}_t$, cannot be extended beyond $[0,T]$, then the maximal solution exists for $t\in[0,T_0)$ with some $T_0\leq T$, and either $\lim_{t\nearrow T_0}\hat{\varphi}_t$ does not exist, which is impossible as $\hat{\varphi}_t=z-tG(z)$, or $\lim_{t\nearrow T_0}\hat{\varphi}_t$ exists and belongs to $\partial D$, which is excluded as $z\in D_T$. Hence $J_T(\varphi_T(z))=z$ for all $z\in D_T$ and $\varphi_T$ is univalent on $D_T$. \\

Applying $\varphi_T$ gives $\varphi_T(J_T(w))=w$ for all $w\in \varphi_T(D_T)$, and $\varphi_T(D_T)$ contains a non-empty open subset. As the left side extends holomorphically to $D$, we see that $\varphi_T(J_T(w))=w$ for all $w\in D$ by the identity theorem. So $J_T$ is univalent and $J_T(D)\subset D_T$. Furthermore, $\varphi_T$ is univalent on $D_T$ and maps $J_T(D)$ onto $D$. Thus $D_T=J_T(D)$ and $J_T:D\to J_T(D)$ is the inverse of $\varphi_T|_{J_T(D)}:J_T(D)\to D$. If $z\in D\setminus J_T(D)$, then $\varphi_T(z)\not \in D$. So $J_T$ is the nonlinear resolvent of $G$ at time $T$.\\

The proof is complete in case $D_t$ contains a non-empty open subset for all $t\geq0$. So let $T_0>0$ be the infimum of all $T>0$ such that $D_T$ does not contain a non-empty open subset and assume that $T_0<\infty$.
If $D_{T_0}$ itself contains a non-empty open subset, then this is also true for $D_{t}$ with $t\in[T_0,T_0+\eps]$ for some $\eps>0$ due to continuity of $t\mapsto \varphi_t(z)$. So $D_{T_0}$ does not contain a non-empty open subset.\\
 
Choose some $z_0\in D$ and let $t\in[0,T_0)$. Then $\varphi_t(J_t(z_0))=z_0$. Let $(t_n)_n\subset [0,T_0)$ be a sequence converging to $T_0$. 
Then $w_n := J_{t_n}(z_0)$ converges to $J_{T_0}(z_0)$ as $n\to\infty$. 
Furthermore, $\varphi_{t_n}(w_n)=z_0$, i.e.\ $w_n-t_nG(w_n)=z_0$, and, by letting $n\to\infty$, $J_{T_0}(z_0)-T_0G(J_{T_0}(z_0))=z_0$.
Thus $J_{T_0}(D)\subset D_{T_0}$. If $J_{T_0}$ is univalent, then $D_{T_0}$ contains a non-empty open subset. 
So $J_{T_0}$ is not univalent and we find $z,w\in D$ with $z\not=w$ and $J_{T_0}(z)=J_{T_0}(w)=c\in D$. We have 
$\varphi_{t_n}(J_{t_n}(z)), \varphi_{t_n}(J_{t_n}(w)) \to \varphi_{T_0}(c)$ as $n\to\infty$, but 
 $\varphi_{t_n}(J_{t_n}(z))=z$ and  $\varphi_{t_n}(J_{t_n}(w))=w$ for all $n\in\N$, a contradiction.\\

\item[(3)]
Step 1 (Regularity of $K_t$): By definition,  $K_t$ is the inverse of $\varphi_t: z\mapsto z- \int_0^t G(s,z) ds$. Clearly, $t \mapsto \varphi_t(z)$ is locally absolutely continuous for any $z\in D$. Furthermore, due to property (2) of Definition \ref{def_dif}, $t\mapsto \varphi_t$ is continuous with respect to locally uniform convergence.\\

Furthermore, as $D$ is bounded, the set $\{K_\tau\,|\, \tau\geq 0\}$ is a normal family and thus $\{K'_\tau\,|\, \tau\geq 0\}$ is locally uniformly bounded.\\

Let $s,t\geq 0$ and $z\in D$. Put $w=K_t(z)$. If $s$ is close enough to $t$, then $\varphi_s(w)\in D$. 
We have 
\[ K_t(z) - K_s(z)  = K_s(\varphi_s(w)) - K_s(z).\]
From $\varphi_s(w)-z = \varphi_s(w) -\varphi_t(w)$, we see that $\tau \mapsto K_\tau(z)$ is locally absolutely continuous for any $z\in D$, and we can argue as in Lemma \ref{unoml} to show that also $t\mapsto K_t$ is continuous with respect to locally uniform convergence.\\

Next we follow \cite[Proposition 5.1]{ABHK13}.
Let $Q\subset D$ be a countable set of uniqueness for holomorphic functions on $D$. For each $z\in Q$, we find a null-set $N_z\subset[0,\infty)$ such that 
$t\mapsto\varphi_t(z)$ is differentiable for all $t\in[0,\infty)\setminus N_z$. So $t\mapsto\varphi_t(z)$ is differentiable for all $z\in Q$ and all $t\in[0,\infty)\setminus N$, where $N$ is the null-set $N=\cup_{z\in Q}N_z$.\\

Let $t\in[0,\infty)\setminus N$ and let $h_n$ be a null-sequence such that $F_n:= (\varphi_{t+h_n}-\varphi_t)/h_n$ is defined. Then the sequence $(F_n)_n$ is locally uniformly bounded due to property (2) of Definition \ref{def_dif}.
For each $z\in Q$, $F_n$ converges as $n\to \infty$. So each locally uniformly converging subsequence of $F_n$ converges on $Q$ to the same limit. Thus each locally uniformly converging subsequence of $F_n$ converges on $D$ to the same limit, which proves that $F_n$ is locally uniformly converging.\\
Consequently, $t\mapsto\varphi_t(z)$ is differentiable for all $t\in[0,\infty)\setminus N$ and all $z\in D$.\\

%https://en.wikipedia.org/wiki/Absolute_continuity
Step 2 (Differential equation): $K_t$ solves the equation $w = K_t(w) - \int_0^t G(s,K_t(w)) ds$. 
%Consider $z'=\varphi^{-1}_t(z), w'=\varphi_{t+\eps}^{-1}(z)$. For $\eps<0$ small enough, we can write $w'=\varphi^{-1}_t(z_1), z' = \varphi^{-1}_{t+\eps}(z_2)$. $\varphi^{-1}_t(z)-\varphi_{t+\eps}^{-1}(z)$   
% Let $|z'-w'|<\delta. Then $\varphi_t(z)$
 We get by differentiation with respect to $w$ that
\[K'_t(w) - \int_0^t G'(s,K_t(w)) \cdot K'_t(w) ds = I,\] i.e.\ $K'_t(w) = (I-\int_0^t G'(s,K_t(w))ds)^{-1}$.
Now assume that $t\mapsto \varphi_t$ is differentiable at $t=t_0$ for all $z\in D$. Similar to the proof of (1) we see that $t\mapsto K_t(z)$ is differentiable for all $z\in D$ at $t=t_0$ with
\[\left.\frac{\partial}{\partial t}K_t(w)\right\vert_{t=t_0}=K'_{t_0}(w)\cdot G(t_0, K_{t_0}(w)).\]

Step 3 (Uniqueness): Let $(f_t)_{t\geq 0}$ be another solution of holomorphic mappings $f_t:D\to D$, such that $t\mapsto f_t(z)$ is locally absolutely continuous for every $z\in D$, $t\mapsto f_t$ is continuous with respect to locally uniformly convergence, satisfying \eqref{m2}.\\

Choose some $z_0\in D$. Then we find some open ball $B_0\subset D$ with center $z_0$, $\overline{B_0}\subset D$, and some $\eps>0$ such that 
$f_t$ is injective on $B_0$ for all $t\in[0,\eps]$. The inverse functions $g_t$ satisfy
\[ \frac{\partial}{\partial t}g_t(w) = -G(t,w)\]
for a.e.\ $t\in[0,\eps]$ and all $w\in B_1:=\cap_{t\in[0,\eps]} f_t(B_0)$, which has non-empty interior for $\eps$ small enough. 
Similar to the argument above, we can show that $t\mapsto g_t(w)$ is absolutely continuous for each $w\in B_1$ and that $t\mapsto g_t$ is continuous with respect to locally uniform convergence. This implies $g_t(w) = w-\int_0^t G(s,w) ds$ on $B_1$. For $\eps$ small enough, $B_2:=\cap_{t\in[0,\eps]}g_t(B_1)\subset B_0$ has non-empty interior  and $f_t=K_t$ on $B_2$ for all 
$t\in[0,\eps]$. The identity theorem implies $f_t=K_t$ on $D$ for all $t\in[0,\eps]$. 

Let $T>0$ the infimum of all $t$ with $f_t=K_t$ and assume $T<\infty$. Then $f_T=K_T$ due to continuity
 and a similar argument shows that $f_t = K_t$ for all $t\in[0,T+\eps]$ for some $\eps>0$, a contradiction.
%In this way, we also see that the set of all 
%$t\geq 0$ with $f_t=K_t$ is open (in $[0,\infty)$). At the same time, it is also closed, and thus equal to $[0,\infty)$.
\end{itemize}
\end{proof}

% \begin{remark}
% The conditions in  Theorem \ref{result1} (3) are necessary in the following sense.
%  Assume that $G(t,z)$ satisfies the conditions of Theorem \ref{result1} (3), but $H_t$ is not a generator for some $t>0$. If, in addition, $H_t$ is also bounded, then hen a solution to \eqref{m2} cannot exist until $t$.
% \end{remark}

We can also prove a slight variation of the second part of the theorem.

\begin{lemma}\label{lemma_1}
 Let $D\subset \C^n$ be bounded and convex and let $G:D\to\C^n$ be bounded and holomorphic. Let $(J_t)_{t\geq 0}$ be a family of holomorphic self-mappings of $D$ such that \begin{equation*}\frac{\partial}{\partial t}J_t(w)=J'_t(w)\cdot G(J_t(w))\quad \text{for all $t\geq 0$}, \quad J_0(w)= w\in D.\end{equation*}
Then $G\in\G(D)$.
\end{lemma}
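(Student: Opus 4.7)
The plan is to realize $G$ as a locally uniform limit of infinitesimal generators of the simple form $(J_t-I)/t$, and then appeal to the closedness of $\G(D)$ under locally uniform convergence.

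For every $t>0$, the mapping $J_t\colon D\to D$ is holomorphic, so by \cite[Proposition 4.3]{RS96}---already invoked in Lemma \ref{rm_1}---the difference $J_t-I$ is an infinitesimal generator on the bounded convex domain $D$. Since $\G(D)$ is a convex cone, $(J_t-I)/t\in\G(D)$ for every $t>0$.

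Next, I would verify that $(J_t-I)/t\to G$ locally uniformly on $D$ as $t\downarrow 0$. Integrating the PDE and using $J_0=I$ gives
\[
J_t(w)-w=\int_0^t J_s'(w)\,G(J_s(w))\,ds,
\]
so it suffices to show that the integrand converges to $G(w)$ locally uniformly in $w$ as $s\downarrow 0$. Boundedness of $D$ makes $\{J_s\}_{s\ge 0}$ a normal family and $\{J_s'\}_{s\ge 0}$ locally uniformly bounded by the Cauchy estimates; combined with boundedness of $G$, the PDE implies that $\partial_s J_s(w)$ is locally uniformly bounded in $(s,w)$, so $s\mapsto J_s$ is locally Lipschitz in the topology of locally uniform convergence. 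Hence $J_s\to I$ and $J_s'\to I$ locally uniformly as $s\downarrow 0$, which yields the required convergence.

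Finally, I would invoke the classical fact that $\G(D)$ is closed under locally uniform convergence on a bounded convex domain---a standard consequence of the Reich--Shoikhet characterization (Theorem \ref{thm:RS}) together with a normal-families argument---to conclude $G\in\G(D)$. The main obstacle is precisely this closedness statement; if one prefers to avoid citing it, an alternative is to adapt the characteristic-curve argument used in the proof of Theorem \ref{result1}(2), which in fact only uses that $G$ is holomorphic (not that $G\in\G(D)$), with boundedness of $G$ supplying the regularity of $t\mapsto J_t$ needed to run the ODE reasoning. Carrying that argument through shows directly that $J_t$ is the nonlinear resolvent of $G$ at time $t$ for every $t\ge 0$, after which Theorem \ref{thm:RS} concludes that $G\in\G(D)$.
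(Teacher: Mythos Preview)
Your proposal is correct, and in fact your fallback alternative at the end is precisely the paper's proof: the paper simply observes that the argument of Theorem~\ref{result1}(2) (the characteristic-curve reasoning) applies verbatim---it nowhere uses $G\in\G(D)$, only that $G$ is holomorphic and that $(J_t)$ solves the PDE---to show that $J_t$ is the nonlinear resolvent of $G$ for every $t\ge 0$, and then invokes boundedness of $G$ together with \cite[Corollary 1.2]{RS97} (the direction of Theorem~\ref{thm:RS} needing the boundedness hypothesis) to conclude $G\in\G(D)$.

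Your primary approach via $(J_t-I)/t\to G$ and closedness of $\G(D)$ is a genuinely different route. It is shorter in spirit and avoids re-running the characteristics argument, but it trades that for an external closedness statement which, while true for bounded convex domains (it follows e.g.\ from the resolvent characterization by a normal-families limit), is not proved in the paper and requires its own justification. The paper's route is more self-contained here since Theorem~\ref{result1}(2) is already established in full detail, so invoking it costs nothing; your route would be preferable in a context where the cone structure and closedness of $\G(D)$ are already on the table.
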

\begin{proof}
The proof of part (2) of Theorem \ref{result1} shows that the nonlinear resolvent equation of $G$ can be solved for all $t\geq0$. 
It follows from the boundedness of $G$ and \cite[Corollary 1.2]{RS97} that $G\in\G(D)$.
\end{proof}

For the rest of this section, we look at the unit disk. First, we construct an example of a differential Herglotz vector field $G(t,z)$ whose solution is not a decreasing Loewner chain.

\begin{example}\label{no_solution}
Let $D=\D$ and consider the two infinitesimal generators 
\[G_1(z) = -z\frac{1 + z}{1 - z}\quad \text{and} \quad G_2(z)=-z\frac{1- z}{1+ z}.\] 
We define  $G(t,z)$ by $G(t,z)=G_1(z)$, $t\leq 1$, $G(t,z)=G_2(z)$, $t>1$. 
As any combination $sG_1+tG_2$, $s,t\geq0$, is again a generator on $\D$, $G$ is a differential Herglotz vector field and we let $(J_t)_{t\geq 0}$ be the solution to \eqref{m2}.

For $t\leq 1$, $J_t$ is simply the resolvent of $G_1$. For $t=1$, the solution of $w=z+tz(1+z)/(1-z)$ is given by $J_{1}(w)=w/(w+2)$. 
For $t\geq 1$, we have \[\frac{\partial}{\partial t}J_t(w)=J_t'(z)\cdot G_2(J_t(z)),\quad  J_{1}(z)=z/(z+2).\]
Now suppose that $H:=G_2\circ J_{1}\in\G(\D)$. We have $H(z)=-J_{1}(z)(1- J_{1}(z))/(1+ J_{1}(z))$. As $H(0)=0$ and $H$ is not constant $0$, the Berkson-Porta formula \eqref{Berkson-Porta} implies 
$\Re\left(-H(z)/z\right)>0$ on $\D$. However, for $z=e^{3i}$ we obtain $\Re\left(J_{1}(z)(1- J_{1}(z))/(1+ J_{1}(z))/z\right) = \Re\left(\frac1{2 + 3 e^{3 i} + e^{6 i}}\right)=-0.47113... < 0$.\\

So $H\not\in \G(\D)$, which implies that $(J_t)_{t\geq0}$ cannot be a decreasing Loewner chain. This can be seen as follows:

If it was a decreasing Loewner chain, we can define the mappings $(J_{1}^{-1}\circ J_{t+1})_{t\geq 0}$. They satisfy the conditions of Lemma \ref{lemma_1} for the function $G_2\circ J_{1}$. Hence, $G_2\circ J_{1}$ 
would be a generator. (In order to apply Lemma \ref{lemma_1}, $G_2\circ J_{1}$ has to be bounded. But we can consider the whole construction with $G_2(z)$ being replaced by the generator $G_2(Rz)$ for
some $R\in(0,1)$ and close enough to $1$.)
\end{example}

\begin{example}\label{solution_exists} If we replace the generator $G_2$ in Example \ref{no_solution} by $G_2(z)=-z$, then $-G_2(J_1(z))/z=J_1(z)/z=1/(z+2)$ has positive real part in 
$\D$. Thus $G_2\circ J_1\in\G(\D)$ and we obtain a decreasing Loewner chain in this case.
%By Lemma \ref{cor_1} and Theorem \ref{result1} (1), equation \eqref{intro_2_eq} has a unique solution, which is a decreasing Loewner chain. 
%Hence we see that there do exist vector fields as in Theorem \ref{result1} (3) which are non-constant with respect to time such that the solution to \eqref{intro_2_eq} is a decreasing Loewner chain.
\end{example}

\begin{remark}
Numerical calculations suggest that every Herglotz vector field which jumps at some $T>0$ from $G_1(z) = -z(1 + e^{i\alpha}z)/(1 - e^{i\alpha}z)$ to  $G_2(z)=-z(1+ e^{i\beta}z)/(1- e^{i\beta} z)$, $\alpha,\beta\in\R$, 
leads to the same situation as in Example \ref{no_solution}, i.e.\ the solution is not a decreasing Loewner chain.

It seems to be less clear to decide whether the solution to \eqref{intro_2_eq} is a decreasing Loewner chain if $G(t,z)$ varies continuously with respect to $t$. For example, 
consider the ``slit'' Herglotz vector field \[G(t,z)=-z \frac{\kappa(t)-z}{\kappa(t)+z}\] for a continuous function $\kappa:[0,\infty)\to\partial\D$. 
There is quite some literature on the question as to which conditions on $\kappa$ guarantee that \eqref{intro_1_eq} produces slit mappings, i.e., for all $t>0$, 
the image domain of $f_t$ should have the form $f_t(\D)=\D\setminus \gamma_t$ for a simple curve $\gamma_t\subset \overline{\D}$. (Roughly speaking, if $\kappa$ is smooth enough, e.g. continuously differentiable, then $f$ is a slit mapping. We refer to \cite{ZZ18} and the references therein for such results.)

We might ask: for which $\kappa$ does the solution to \eqref{intro_2_eq} form a decreasing Loewner chain?
\end{remark}

In the case of the unit disk, we can obtain some further information about the Herglotz vector field $G\circ J_t$ appearing in \eqref{m}.

\begin{theorem}
\label{expression-G-theorem}
Let $G(z) = (\tau-z)(1-\overline{\tau}z)p(z)\in \G(\D)$ with resolvents $(J_t)_{t\geq 0}$. Then the Herglotz vector field $G_t := G\circ J_t$ can be written as 
\begin{equation}
\label{expression-G-equation}G_t(z)=(\tau-z)(1-\overline{\tau}z)p_t(z) \quad \text{with} \quad 
p_t(z) = \frac{1}{t}\frac{z-J_t(z)}{(z-\tau)(1-\overline{\tau}z)}, \Re p_t \geq 0, \quad \text{for all} \quad t>0.
\end{equation}
\end{theorem}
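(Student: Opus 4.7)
The key identity comes straight from the resolvent equation. Solving $z = J_t(z) - tG(J_t(z))$ for $G_t(z) := G(J_t(z))$ gives $G_t(z) = (J_t(z)-z)/t$; dividing by $(\tau-z)(1-\overline{\tau}z)$ and adjusting signs yields the displayed formula for $p_t$. I would then check that $p_t$ is holomorphic on $\D$: the denominator can vanish in $\D$ only at $z=\tau$ in the case $\tau\in\D$ (the other root $1/\overline{\tau}$ lies outside $\D$, and for $\tau\in\partial\D$ both roots lie on $\partial\D$), and in that case $G(\tau)=0$ forces $J_t(\tau)=\tau$, so the numerator $z-J_t(z)$ cancels the pole.

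To establish $\Re p_t \ge 0$, my plan is to identify $p_t$, up to the positive factor $t$, with the Berkson-Porta density of the generator $J_t - I$. Since $J_t:\D\to\D$ is holomorphic and non-identity (assuming $G\not\equiv 0$), $J_t-I$ is an infinitesimal generator on $\D$ by the Reich-Shoikhet result already invoked in the proof of Lemma \ref{rm_1}. Applying Berkson-Porta (Lemma \ref{lem:BP}) yields
\[
(J_t-I)(z) \,=\, (\sigma-z)(1-\overline{\sigma}z)\,h(z)
\]
for some $\sigma\in\overline{\D}$ and holomorphic $h$ with $\Re h \ge 0$. Comparing with $J_t - I = tG_t = t(\tau-z)(1-\overline{\tau}z)p_t$, once I show $\sigma=\tau$, uniqueness of the Berkson-Porta factorization (for $G\not\equiv 0$) gives $h = tp_t$ and hence $\Re p_t = \Re h/t \ge 0$.

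Identifying $\sigma=\tau$ splits into two cases. When $\tau\in\D$ the identity $J_t(\tau)=\tau$ makes $(J_t-I)(\tau)=0$, and since a Berkson-Porta density with $\Re h \ge 0$ cannot have an interior zero unless it is identically $0$, the point $\tau$ is the unique interior zero of $J_t-I$ and must coincide with $\sigma$. When $\tau\in\partial\D$, the factorization of $G$ has no interior zero, so $J_t$ has no interior fixed point and $\sigma\in\partial\D$ has to be pinned down by a genuinely dynamical argument. Here I would show that $J_t$ preserves every horocycle $E(\tau,R)$ at $\tau$ by a Julia-type computation: inserting the resolvent relation $z = J_t(z) - tG(J_t(z))$ and the factorization $G=(\tau-\cdot)(1-\overline{\tau}\,\cdot)p$ with $\Re p\ge 0$ into the horocycle inequality $|z-\tau|^{2}<R(1-|z|^{2})$, one extracts the corresponding inequality for $J_t(z)$. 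Horocycle preservation combined with Lemma \ref{lem:SPDW}/the Denjoy-Wolff theorem applied to the iterates of $J_t$ then forces $\tau$ to be the Denjoy-Wolff point of $J_t$ as a discrete dynamical system, and thus $\sigma=\tau$. The boundary case is the main obstacle, since one has to squeeze sufficient positivity out of the resolvent equation plus $\Re p\ge 0$ to obtain the horocycle inequality; the interior case is essentially immediate.
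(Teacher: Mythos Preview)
Your overall architecture matches the paper's: write $G_t=(J_t-I)/t$, invoke Lemma~\ref{rm_1} to know $G_t\in\G(\D)$, apply Berkson--Porta, and then identify the Berkson--Porta point with $\tau$; Case~1 is handled identically.

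For Case~2 the paper takes a different route from your horocycle proposal. It works entirely with angular limits: by \cite{CDMP06} and the Julia--Wolff--Carath\'eodory theory, $G(\tau)=0$ and $G'(\tau)\le 0$, hence $\varphi_t(\tau)=\tau$ and $\varphi_t'(\tau)\ge 1$; the Denjoy--Wolff point $\rho\in\partial\D$ of $J_t$ then satisfies $\varphi_t(\rho)=\rho$, $\varphi_t'(\rho)\ge 1$, whence $G(\rho)=0$, $G'(\rho)\le 0$, forcing $\rho=\tau$ by uniqueness of the Denjoy--Wolff point of the semigroup of $G$. Your horocycle computation is a legitimate alternative for this step and does go through: writing $w=J_t(z)$, $A=t(1-\overline\tau w)p(w)$, one finds $z-\tau=(w-\tau)(1+A)$ and, after some algebra,
\[
(1-|w|^2)|1+A|^2-(1-|z|^2)=2t\,|1-\overline\tau w|^2\big[\Re p(w)+t|p(w)|^2\Re(1-\overline\tau w)\big]\ge 0,
\]
which gives the desired horocycle monotonicity.

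However, your final implication ``$\tau$ is the Denjoy--Wolff point of $J_t$, and thus $\sigma=\tau$'' hides a real step. The point $\sigma$ you introduced is the Denjoy--Wolff point of the \emph{continuous semigroup} generated by $J_t-I$, not of the discrete iteration of $J_t$; these coincide, but you have not said why. The paper closes this gap explicitly: from $J_t(\tau)=\tau$ and $J_t'(\tau)\in(0,1]$ (angular limits) it gets $G_t(\tau)=0$, $G_t'(\tau)\le 0$, and then invokes \cite{CDMP06} once more to conclude $\tau_t=\tau$. So unless you supply a separate argument (e.g.\ via the variation-of-constants formula and convexity of horocycles to show the semigroup of $J_t-I$ also preserves each $E(\tau,R)$), you end up needing exactly the angular-derivative characterization the paper uses---in which case the horocycle computation, while correct, becomes an avoidable detour.
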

\begin{proof}If $p(z)\equiv 0$, then $J_t(z)\equiv z$ and the statement trivially holds true. So assume that $p\not\equiv 0$. Then the Herglotz vector field $G_t=G\circ J_t$ can be written as $G_t(z)=\frac{z-J_t(z)}{t}=(z-\tau_t)(1-\overline{\tau_t}z)p_t(z)$ with $\tau_t\in \overline{\D}$ and $p_t:\D\to \C$ is holomorphic with $\Re p_t \geq 0$ for all $t > 0$.

Case 1: $\tau \in \D$. Then $G(\tau)=0$ and thus $J_t(\tau)=\tau$, which shows $G_t(\tau)=0$ and thus $\tau_t=\tau$ for all $t>0$.

Case 2: $\tau\in \partial \D$. 
It is shown in \cite[Lemma 5.1]{ESS20} (plus the text following Lemma 5.1) 
that the Denjoy-Wolff point of $J_t$ is equal to $\tau$ for each $t>0$. For the sake of completeness, we add the necessary details.

By \cite[Theorem 1]{CDMP06} and  \cite[The Grand Iteration Theorem, p. 78]{shapiro}, we know that $G(\tau)=0$ and $G'(\tau)\leq 0$ in the sense of angular limits. Thus, for $t>0$, the function $\varphi_t(z)=z-tG(z)$ satisfies $\varphi_t(\tau)=\tau$ and $\varphi'_t(\tau)\geq 1$.

As $G$ has no zeros in $\D$, $\varphi_t$, and thus also $J_t$, has no fixed points in $\D$. Hence, the Denjoy-Wolff point $\rho$ of $J_t$ lies on $\partial \D$. We have $J_t(\rho)=\rho$ and $J_t'(\rho)\in (0,1]$. This implies $\varphi_t'(\rho)=\rho$ and $\varphi_t'(\rho)\geq 1$ and thus $G(\rho)=0$ and $G'(\rho)\leq 0$. But then $\rho$ is the Denjoy-Wolff point of the semigroup associated to $G$, i.e.\ $\rho=\tau$, again, by \cite[Theorem 1]{CDMP06} and  \cite[The Grand Iteration Theorem, p. 78]{shapiro}.

We conclude $J_t(\tau)=\tau$ and $J_t'(\tau)\in (0,1]$. As $G_t(z)=(z-J_t(z))/t$, we have $G_t(\tau)=0$ and $G_t'(\tau)\leq 0$, again in the sense of angular limits. This implies that the 
Denjoy-Wolff point of the semigroup associated to $G_t$ is equal to $\tau$. We conclude $\tau_t=\tau$ for all $t>0$.
\end{proof}

\section{Upper half-plane and Denjoy-Wolff point at infinity}
\label{sec_free}

In \cite[Section 4 and Appendix A1]{HS}, the authors explain that free semigroups of probability measures lead to 
certain decreasing Loewner chains. By comparing the differential equation of these Loewner chains to \eqref{intro_2_eq}, we see that such semigroups are closely 
related to nonlinear resolvents of certain infinitesimal generators on the upper half-plane $\uhp$.

In the following we will use the Nevanlinna representation formula for Pick functions, which corresponds to the Herglotz representation for Carath\'eodory functions.

\begin{lemma}[e.g. {\cite[Theorem 1]{cau32}}]
\label{Nev-expression}
If $q\in \Nev$, then there
exist real constants $\alpha\ge0$ and $\beta$
and a finite non-negative Borel measure $\rho$ on $\R$ such that
\begin{equation}
\label{Nev-equation}
q(z)=\alpha z+\beta+\int_\R \frac{1+tz}{t-z}\rho(dt),\quad z\in\uhp.
\end{equation}
The number $\alpha$ and $\beta$ are calculated via $\alpha = \lim_{y\to\infty}q(iy)/(iy)$ and $\beta = \Re q(i)$, respectively.
Conversely, every function expressed as above is a Pick function.\\ The triple $(\alpha,\beta,\rho)$ is also called the \emph{Nevanlinna triple} of $q$.
\end{lemma}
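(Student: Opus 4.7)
The plan is to reduce the representation to the classical Herglotz theorem on the disk via the Cayley transform $C(z)=(z-i)/(z+i)$, $C:\uhp\to\D$. Given $q\in\Nev$, set $p(\zeta):=-i\,q(C^{-1}(\zeta))$. Since $\Im q\ge 0$ on $\uhp$ translates into $\Re p=\Im(q\circ C^{-1})\ge 0$ on $\D$, the function $p$ is a Carath\'eodory function, so the Herglotz--Riesz representation furnishes
\[
p(\zeta)=i\gamma+\int_\T \frac{\xi+\zeta}{\xi-\zeta}\,d\mu(\xi)
\]
for some $\gamma\in\R$ and a finite non-negative Borel measure $\mu$ on $\T$.

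Next I would substitute $\zeta=C(z)$ and transform the kernel. A direct calculation shows that for $\xi=C(t)$ with $t\in\R$,
\[
\frac{\xi+\zeta}{\xi-\zeta}=-i\,\frac{1+tz}{t-z},
\]
while the exceptional point $\xi=1$ (the only point of $\T$ not in the image of $C|_\R$) gives
\[
\frac{1+\zeta}{1-\zeta}=-iz.
\]
Decomposing $\mu=\alpha\delta_1+\mu'$ with $\mu'(\{1\})=0$ and pushing $\mu'$ forward to $\R$ via $C^{-1}$ to produce a finite non-negative Borel measure $\rho$, multiplying by $i$ (to invert $p\mapsto q$) yields
\[
q(z)=-\gamma+\alpha z+\int_\R \frac{1+tz}{t-z}\,d\rho(t),
\]
so $\beta=-\gamma$ and the representation is established with $\alpha=\mu(\{1\})\ge 0$.

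To identify the constants, I would substitute $z=iy$ and let $y\to\infty$. Dominated convergence (with pointwise bound $|t/(t-iy)|\le 1$ and finiteness of $\rho$) makes the contribution of the integral to $q(iy)/(iy)$ vanish, giving $\lim_{y\to\infty}q(iy)/(iy)=\alpha$. Evaluating at $z=i$, the elementary identity $\frac{1+ti}{t-i}=i$ for all $t\in\R$ turns the integral into the purely imaginary quantity $i\rho(\R)$; combined with the purely imaginary term $\alpha i$, this yields $\Re q(i)=\beta$.

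The converse is a direct verification: differentiation under the integral sign (legitimate by finiteness of $\rho$ and a standard Morera/uniform-convergence argument on compact subsets of $\uhp$) shows that the formula defines a holomorphic function on $\uhp$, and the computation $\Im\frac{1+tz}{t-z}=(1+t^{2})\,\Im z/|t-z|^{2}\ge 0$ shows it takes values in $\overline{\uhp}$, so it belongs to $\Nev$. The main technical point is the careful bookkeeping around the atom at $\xi=1$ — the boundary value corresponding to $\infty$ — which is precisely what produces the linear term $\alpha z$; everything else is routine once the kernel transformation has been checked.
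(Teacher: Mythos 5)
Your proof is correct: the paper states this lemma without proof, citing it as classical (Cauer's theorem), and your Cayley-transform reduction to the Herglotz--Riesz representation on $\D$ — including the kernel identity $\frac{\xi+\zeta}{\xi-\zeta}=-i\,\frac{1+tz}{t-z}$ for $\xi=C(t)$, the atom at $\xi=1$ producing the term $\alpha z$, and the identifications $\alpha=\lim_{y\to\infty}q(iy)/(iy)$, $\beta=\Re q(i)$ via $\frac{1+ti}{t-i}=i$ — is exactly the standard argument behind the cited result. All the computations check out, so there is nothing to reconcile with the paper beyond the citation.
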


\subsection{Additive convolution}

Let $\mu$ be a probability measure on $\R$. The Cauchy transform (or Stieltjes transform) of a probability measure $\mu$ on $\R$ is given by 
$$G_{\mu}(z):=\int_\R\frac{1}{z-t}\,\mu(\mathrm{d}t), \quad z\in\uhp=\{z\in\C\,|\, \Im(z)>0\}.$$
The $F$-transform of $\mu$ is simply defined by $F_\mu(z):=1/G_\mu(z)$ for $z\in\uhp$.

\begin{remark}
By the definition, the $F$-transform belongs to $\Nev$. Further it is known (e.g. \cite{M92}) that it has the Nevanlinna representation
$$
F_{\mu}(z)= z+\beta+\int_\R \frac{1+xz}{x-z}\rho(\mathrm{d}x), \quad z \in \uhp,
$$
with $\beta \in \R$ and $\rho$ is a finite non-negative Borel measure on $\R$.
In view of Lemma \ref{Nev-expression}, the above fact implies that $F_\mu$ satisfies $\lim_{y \to \infty}F_{\mu}(iy)/iy = 1$.
\end{remark}

$G_\mu$ maps the upper half-plane $\uhp$ into the lower half-plane. One can now show that there exist $\alpha, \beta>0$, such that $G_\mu$ is univalent within the set $\Gamma_{\alpha,\beta}=\{z\in \uhp\,|\, |\Re(z)|<\alpha \Im(z), |z|>\beta \}$. Furthermore, there exist $\gamma, \delta>0$ such that $\Delta_{\gamma,\delta} \subset G_\mu(\Gamma_{\alpha,\beta})$ with $\Delta_{\gamma,\delta}=\{1/z\,|\, z\in\uhp, |\Re(z)|<\gamma \Im(z), |z|>\delta \}$. The set $\Delta_{\gamma,\delta}$ is a wedge in the lower half-plane, symmetric with respect to the imaginary axis, and $\overline{\Delta_{\gamma,\delta}}\cap\R=\{0\}$. So we can define the compositional right inverse $G_\mu^{-1}$ on $\Delta_{\gamma,\delta}$.\\

The $R$-transform $R_\mu$ is now defined as $R_\mu(z):=G_\mu^{-1}(z)-\frac1{z}$, which is a holomorphic function defined on a wedge $\Delta_{\gamma,\delta}$, mapping it into the lower half-plane or into $\R$; see \cite[Section 5]{BV93} or \cite[Section 3.6]{MS17}.\\

 Instead of $R_\mu$, one might also regard the Voiculescu transform $\varphi_\mu(z)$ defined by $\varphi_\mu(z):=F_\mu^{-1}(z)-z=G_\mu^{-1}(1/z)-z=R_\mu(1/z)$. It maps $\Gamma_{\gamma,\delta}$ into the lower half-plane or into $\R$.

\begin{example}\label{ex_semicircle}The semicircle distribution $W(0,1)$, with mean $0$ and variance $1$, is given by the density 
\[\frac{1}{2\pi} \sqrt{4 -x^2}, \quad  x\in [-2,2].\] 
We have $G_{W(0,1)}(z)=\frac{z-\sqrt{z^2-4}}{2}$, where the branch of the square root is chosen such that $\sqrt{\cdot}$ maps $\C\setminus [-4,\infty)$ into the upper half-plane, see Exercise 5 on page 54 of \cite{MS17}.
Solving the equation $\frac{z-\sqrt{z^2-4}}{4}=w$ gives $z=w+\frac{1}{w}$ and thus \[R_{W(0,1)}(z)= z, \quad \varphi_\mu(z)=\frac1{z}.\]
Furthermore, $F_{W(0,1)}(z)=\frac{2}{z-\sqrt{z^2-4}}$, and as $F_{W(0,1)}$ extends continuously to $\uhp\cup \R$, $F_{W(0,1)}(\uhp)$ is the unbounded complement of the curve $\{F_{W(0,1)}(x)\,|\, x\in(-2,2)\}$ in $\uhp$. A small calculation yields $\Re(G_{W(0,1)}(x))=\frac{x}{2}$ and $\Im(G_{W(0,1)}(x))= \frac{-\sqrt{4-x^2}}{2}$ for $x\in[-2,2]$. Thus, for $x\in(-2,2)$, we have 
\[F_{W(0,1)}(x) = \frac1{\frac{x}{2}+i\frac{-\sqrt{4-x^2}}{2}}=
\frac{2}{x-i\sqrt{4-x^2}}=
\frac{2x+2i\sqrt{4-x^2}}{4}=\frac{x}{2}+i\sqrt{1-\left(\frac{x}{2}\right)^2},\]
and we see that the curve is the semicircle $(\partial \D)\cap \uhp$.
\end{example}

 \begin{figure}[h]
 \begin{center}
 \includegraphics[width=0.9\textwidth]{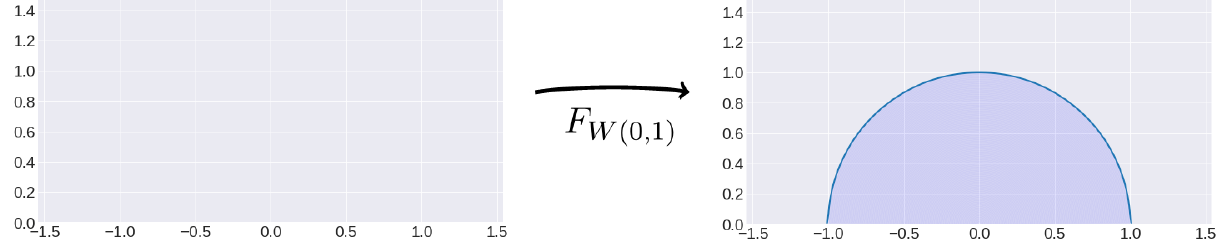}
 \caption{$F$-transform of the semicircle distribution $W(0,1)$.}
 \end{center}
 \end{figure}

For two probability measures $\mu$ and $\nu$ on $\R$, $R_\mu$ and $R_\nu$ are both defined in a sufficiently small wedge $\Delta_{\gamma,\delta}$ and it can be shown that their sum is again the $R$-transform of a probability measure.
The \textit{additive free convolution} $\boxplus$ is now defined by (see \cite{BV93} or \cite[Section 3.6]{MS17})
\begin{equation*}
R_{\mu \boxplus \nu}(z) = R_{\mu}(z)  + R_{\nu}(z) \quad \quad [\text{equivalently:}\quad  
\varphi_{\mu \boxplus \nu}(z) = \varphi_{\mu}(z)  + \varphi_{\nu}(z)].
\end{equation*}
This convolution arises from the notion of \textit{free independence} introduced by D. Voiculescu. Introductions to free probability theory can be found in \cite{ns06}, \cite{Voi97}.

A probability measure $\mu$ on $\R$ is called \textit{freely infinitely divisible} 
if for every $n\in\N$ there exists a probability measure $\mu_n$ on $\R$ such that 
$\mu = \mu_n \boxplus \cdots \boxplus \mu_n$ ($n$-fold convolution).

\begin{theorem}[Theorem 5.10 in \cite{BV93}] \label{thmBV93}
For a probability measure $\mu$ on $\R$, the following statements are equivalent.
\begin{enumerate}[\rm(1)]
\item $\mu$ is freely infinitely divisible.
\item $\mu=\mu_1$  for a $\boxplus$-semigroup $(\mu_t)_{t\geq0}$
 (i.e.\ $\mu_0=\delta_0,$ $\mu_{t+s}=\mu_t \boxplus \mu_s$ for all $s,t\geq0$ and $t\mapsto\mu_t$ is continuous with respect to weak convergence).
%\item For any $t>0$, there exists a probability measure $\mu^{\boxplus t}$ with the property $\varphi_{\mu^{\boxplus t}}(z) = t\varphi_\mu(z).$
\item $-\varphi_\mu$ extends to a Pick function, i.e.\ a holomorphic function from $\mathbb{H}$ into $\mathbb{H} \cup \R$.
\item\label{FLK1} There exist $a \in \R$ and a finite non-negative measure $\rho$ on $\R$ such that
\begin{equation}\label{form0}
\varphi_\mu(z)=a +\int_{\mathbb{R}}\frac{1+z x}{z-x} \rho({\rm d}x) ,\qquad z\in \mathbb{H}.
\end{equation}
\end{enumerate}
Conversely, given $a\in\R$ and a finite non-negative measure $\rho$ on $\R$, there exists a unique $\boxplus$-infinitely divisible distribution $\mu$ which has the Voiculescu transform of the form \eqref{form0}.
\end{theorem}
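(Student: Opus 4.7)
The plan is to establish the equivalence of (1)--(4) by proving the cycle (2)$\Rightarrow$(1)$\Rightarrow$(3)$\Leftrightarrow$(4)$\Rightarrow$(2), with the essential analytic input being the Nevanlinna representation of Lemma \ref{Nev-expression}. The implication (2)$\Rightarrow$(1) is immediate: if $(\mu_t)_{t\ge 0}$ is a $\boxplus$-semigroup with $\mu=\mu_1$, then $\mu=(\mu_{1/n})^{\boxplus n}$ for every $n$. The equivalence (3)$\Leftrightarrow$(4) I would handle by applying the Nevanlinna representation to the Pick function $-\varphi_\mu$; the linear coefficient $\alpha$ in that representation must vanish because Voiculescu transforms satisfy $\varphi_\mu(iy)/(iy)\to 0$ as $y\to\infty$, which is the standard consequence of $F_\mu(z)=z+o(1)$ nontangentially at infinity. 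After rewriting $-\int\frac{1+tz}{t-z}\,\rho(dt)=\int\frac{1+zt}{z-t}\,\rho(dt)$, the Pick form translates to exactly \eqref{form0}.

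For (4)$\Rightarrow$(2), I would use the converse part of the statement (treated separately below) to construct, for each $t\ge0$, a probability measure $\mu_t$ whose Voiculescu transform is $t\,\varphi_\mu$; this is again of the form \eqref{form0} with parameters $(ta,t\rho)$. The semigroup identity $\mu_s\boxplus\mu_t=\mu_{s+t}$ is then immediate from additivity of $\varphi$ under $\boxplus$, and weak continuity $t\mapsto\mu_t$ follows from the fact that $F_{\mu_t}(z)=z-t\varphi_\mu(z)$ depends continuously on $t$ locally uniformly on $\uhp$, together with the Stieltjes--Perron inversion formula.

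The main obstacle is the implication (1)$\Rightarrow$(3), which is the free analogue of the Lévy--Khintchine theorem. Given $\mu=\mu_n^{\boxplus n}$, additivity of the $R$-transform on a common wedge gives $R_{\mu_n}=R_\mu/n$, which forces $\mu_n\Rightarrow\delta_0$ weakly and makes $\varphi_{\mu_n}$ uniformly small on compacta of $\uhp$. The delicate step is producing the Nevanlinna data of $-\varphi_\mu$ from the family $(\mu_n)$: I would introduce the finite measures $\rho_n(dx)=n\,\tfrac{x^2}{1+x^2}\,\mu_n(dx)$ on $\R$, prove a tightness estimate showing that $(\rho_n)$ has uniformly bounded total mass via the smallness of $\varphi_{\mu_n}$ and a Poisson-type integral bound, and extract a weak subsequential limit $\rho$. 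Identifying this $\rho$ (together with a real constant $a$) as the Nevanlinna triple of $-\varphi_\mu$ requires showing pointwise convergence of $n\varphi_{\mu_n}$ to $\varphi_\mu$ on $\uhp$ and matching the two integral representations by a dominated-convergence argument. This is where the substantive analytic work lies.

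For the converse statement and uniqueness, I would argue as follows. Uniqueness is standard: $\mu$ is determined by $\varphi_\mu$ since $F_\mu^{-1}(z)=z+\varphi_\mu(z)$ near infinity determines $F_\mu$ by analytic continuation, and then $\mu$ is recovered from $1/F_\mu=G_\mu$ via Stieltjes inversion. For existence given $(a,\rho)$, define $\varphi(z)=a+\int\frac{1+zx}{z-x}\,\rho(dx)$; a direct computation of imaginary parts shows $-\varphi$ is a Pick function, hence $H(z):=z+\varphi(z)$ satisfies $\Im H(z)\ge\Im z$ and $H(z)/z\to 1$ nontangentially at $\infty$. Classical inversion of such a function in a truncated cone near $\infty$ yields a right inverse $F=H^{-1}$ that is an $F$-transform of some probability measure $\mu$, and by construction $\varphi_\mu=\varphi$. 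The measure $\mu$ is then $\boxplus$-infinitely divisible once (4)$\Rightarrow$(2) is established, because $t\mapsto t\varphi$ generates the required semigroup.
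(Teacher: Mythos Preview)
The paper does not prove this statement: it is quoted verbatim as Theorem~5.10 of \cite{BV93} and used as a black box (see also Remarks~\ref{rm_thmBV93_1} and~\ref{rm_thmBV93}). There is therefore no proof in the paper to compare your proposal against.

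That said, your sketch is essentially the original Bercovici--Voiculescu argument. The cycle you set up is the standard one, the reduction (3)$\Leftrightarrow$(4) via the Nevanlinna representation with vanishing linear term is exactly how it is done, and for the hard implication (1)$\Rightarrow$(3) the strategy of rescaling the $n$-th roots $\mu_n$ into finite measures $n\,\tfrac{x^2}{1+x^2}\,\mu_n(dx)$, extracting a vague limit, and identifying it with the Nevanlinna data of $-\varphi_\mu$ is precisely the approach of \cite{BV93}. One small correction: your formula $F_{\mu_t}(z)=z-t\varphi_\mu(z)$ in the (4)$\Rightarrow$(2) paragraph is wrong as stated---that expression is $F_{\mu_t}^{-1}(z)$, not $F_{\mu_t}(z)$---but the conclusion (continuity of $t\mapsto\mu_t$) survives once you invert, since locally uniform continuity of $t\mapsto F_{\mu_t}^{-1}$ on suitable cones still yields locally uniform continuity of $t\mapsto F_{\mu_t}$ and hence weak continuity of $\mu_t$.
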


\begin{remark}\label{rm_thmBV93_1}Note that the functions appearing in \eqref{form0} are exactly the holomorphic mappings $q:\uhp\to\uhp\cup\R$ with Nevanlinna triple $(0,a,\rho)$, i.e.\ a holomorphic function $q:\uhp\to\uhp\cup\R$ is of the form \eqref{form0} if and only if $\lim_{y\to\infty}q(iy)/y=0$.
\end{remark}

\begin{remark}\label{rm_thmBV93}From the proof of \cite[Theorem 5.10]{BV93}, it follows that a family $(\mu_t)_{t\geq0}$ of probability measures on $\R$ is a $\boxplus$-semigroup if and only if $\varphi_{\mu_t} = t \varphi_{\mu_1}$ for all $t\geq 0$.
\end{remark}

%Every holomorphic mapping $f$ from $\uhp$ into $\uhp\cup \R$ can be written as
%\begin{equation}\label{nevanlinna}
%f(z)=a + bz + \int_{\mathbb{R}}\frac{1+z x}{x-z} \gamma({\rm d}x) ,\qquad z\in \mathbb{H},
%\end{equation}
%with a non-negative  measure $\gamma$ on $\R$ and some $a\in \R, b\geq 0$ (Nevanlinna representation, see \cite[Theorem 1]{cau32}). The number $b$ can be calculated via 
%$b = \lim_{y\to\infty}f(iy)/(iy)$. Conversely, every such triple $(a, b, \gamma)$ produces a holomorphic function from $\uhp$ into $\uhp\cup \R$.}

\begin{example}The Dirac measure $\mu=\delta_0$ is freely infinitely divisible as $\varphi_{\mu}(z)\equiv 0$. 
Example \ref{ex_semicircle} shows that the semicircle distribution $W(0,1)$ is also freely infinitely divisible. $W(0,1)$ is the ``normal distribution of free probability theory'', i.e.\ the limit distribution in the free central limit theorem, see \cite[Chapter 2]{MS17}.
\end{example}

%The functions with $b=0$ correspond to all holomorphic mappings $f$ from $\uhp$ into $\uhp\cup \R$ with $\lim_{y\to\infty}f(iy)/(iy)=0$.
%\cite[p. 20, Theorem 1]{Don74}.

%\begin{theorem}\label{add_gen}If $J_t = F_{\mu_t}$ for a free $\boxplus$-semigroup of probability measures $\mu_t$ on $\R$, then 
%$J_t$ are the resolvents of the generator $-\varphi_{\mu_1}$.
%%Conversely, let $\phi:\uhp\to \uhp\cup\R$ be holomorphic such that $-\phi$ is of the form \eqref{form0}.
%%The resolvents $J_t$ of $\phi$ on $\uhp$ can be written as $J_t = F_{\mu_t}$ for a free $\boxplus$-semigroup of probability measures $\mu_t$ on $\R$.
%\end{theorem}
%Conversely, let $\phi:\uhp\to \uhp\cup\R$ be holomorphic such that $-\phi$ is of the form \eqref{form0}.
%The resolvents $J_t$ of $\phi$ on $\uhp$ can be written as $J_t = F_{\mu_t}$ for a free $\boxplus$-semigroup of probability measures $\mu_t$ on $\R$.

%\begin{proof}
%Let  $\{\mu_t\}_{0\leq t}$ be a $\boxplus$-semigroup and let $G_t = G_{\mu_t}$. Then $\varphi_{\mu_t}=t\varphi_{\mu_1}$ and each 
%$\varphi_{\mu_t}$ is defined on the whole upper half-plane. We conclude that
%\[ \frac{\partial}{\partial t}G_t(z) = -\frac{\partial}{\partial z}G_t(z) \cdot R_{\mu_1}(G_t(z)), \quad G_0(z)=1/z.\]
%Put $J_t=F_{\mu_t}=1/G_t$. Then
%\[ \frac{\partial}{\partial t}J_t(z) = -\frac{\partial}{\partial z}J_t(z) \cdot \varphi_{\mu_1}(J_t(z)), \quad J_0(z)=z.\]
%As $-\varphi_{\mu_1}$ maps $\uhp$ into $\uhp\cup \R$,  Lemma \ref{result2} now implies 
%that $J_t$ are the resolvents of the generator $-\varphi_{\mu_1}$.
%\end{proof}

\begin{theorem}
\label{add_gen_nonaut} 
Let $G:\uhp\to\uhp\cup \R$ be holomorphic with $\lim_{y\to\infty} G(iy)/y=0$. 
Then there exists a unique solution $(f_t)_{t\geq0}$ to
\begin{equation}\label{op0} \frac{\partial}{\partial t}f_t(z)=f_t'(z)\cdot G(f_t(z)),\quad t\geq 0, \quad f_0(z)=z\in \uhp.
\end{equation}
The solution can also be written as $(f_t)_{t \geq 0}=(F_{\mu_t})_{t \geq 0}$ for a free $\boxplus$-semigroup $(\mu_{t})_{t \geq 0}$ and 
$(f_t)_{t\geq0}$ is a decreasing Loewner chain.\\

Furthermore, the family $(f_t)_{t\geq0}$ are the nonlinear resolvents of $G$ and $G=-\varphi_{\mu_1}$.
\end{theorem}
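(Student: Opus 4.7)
The plan is to first build a candidate $(f_t)$ directly from free probability, then identify it with a nonlinear resolvent of $G$ by analytic continuation, and finally derive the PDE, the decreasing Loewner chain property, and uniqueness from this resolvent identity.

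\emph{Identification via free probability.} Since $G:\uhp\to\uhp\cup\R$ is a Pick function with $\lim_{y\to\infty}G(iy)/y=0$ (equivalently, with vanishing linear coefficient $\alpha$ in its Nevanlinna representation), Remark \ref{rm_thmBV93_1} together with Theorem \ref{thmBV93} produces a freely infinitely divisible probability measure $\mu_1$ on $\R$ with $G=-\varphi_{\mu_1}$, which already establishes the last claim of the theorem. Let $(\mu_t)_{t\ge 0}$ be the $\boxplus$-semigroup determined by $\mu_1$, so that $\varphi_{\mu_t}=t\varphi_{\mu_1}=-tG$ by Remark \ref{rm_thmBV93}, and define the candidate $f_t:=F_{\mu_t}:\uhp\to\uhp$.

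\emph{$f_t$ is the nonlinear resolvent of $G$.} For each $t\ge 0$ there is a wedge $\Gamma_t\subset\uhp$ near $\infty$ on which $F_{\mu_t}$ is univalent, with local inverse $F_{\mu_t}^{-1}(w)=w+\varphi_{\mu_t}(w)=w-tG(w)=:\varphi_t(w)$ on the image $F_{\mu_t}(\Gamma_t)$. Hence the two holomorphic functions $\varphi_t\circ F_{\mu_t}$ and the identity agree on the non-empty open set $\Gamma_t\subset\uhp$, so the identity theorem gives $\varphi_t(F_{\mu_t}(z))=z$ for every $z\in\uhp$. Consequently, for every $w\in\uhp$ the point $z:=F_{\mu_t}(w)\in\uhp$ solves the resolvent equation $w=z-tG(z)$, which by definition makes $f_t$ a nonlinear resolvent of $G$. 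I expect this step, the passage from the local wedge identity to the global one, to be the main technical point to nail down; crucially, the argument does not require $F_{\mu_t}$ itself to be surjective onto $\uhp$.

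\emph{PDE, Loewner chain, and uniqueness.} Differentiating $f_t(z)-tG(f_t(z))=z$ separately in $z$ and in $t$ yields $f_t'(z)=(1-tG'(f_t(z)))^{-1}$ and $(1-tG'(f_t(z)))\partial_t f_t(z)=G(f_t(z))$, which combine to \eqref{op0}; the initial condition $f_0=F_{\delta_0}=\mathrm{id}$ is automatic. Because $\Im G\ge 0$ on $\uhp$, the set $\{w\in\uhp:w-tG(w)\in\uhp\}$ decreases with $t$, and $f_t(\uhp)$ equals this set (the reverse inclusion coming from a second application of analytic continuation, now to $F_{\mu_t}\circ\varphi_t=\mathrm{id}$), whence $f_t(\uhp)\subset f_s(\uhp)$ for $s\le t$. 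Locally uniform continuity of $t\mapsto f_t$ follows from the implicit function theorem applied to $(t,w)\mapsto w-tG(w)-z$, whose $w$-derivative $1-tG'(w)$ is non-zero at $w=f_t(z)$ by the formula for $f_t'$. Finally, $G\in\G(\uhp)$ by Lemma \ref{H_gen} (via the form $q(z)$ with $q=G\in\Nev$), so Theorem \ref{result1}(2) shows that any holomorphic self-map family $(g_t)$ of $\uhp$ satisfying \eqref{op0} with $g_0=\mathrm{id}$ is itself a nonlinear resolvent of $G$; a continuity argument in $t$ based on the same implicit function theorem, started from $g_0=f_0=\mathrm{id}$, then forces $g_t=f_t$.
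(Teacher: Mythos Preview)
Your proposal follows essentially the same strategy as the paper: construct $f_t=F_{\mu_t}$ from free probability via Theorem~\ref{thmBV93} and Remark~\ref{rm_thmBV93}, pass from the local identity $F_{\mu_t}^{-1}(w)=w-tG(w)$ on a wedge to the global resolvent identity $\varphi_t(f_t(z))=z$ on $\uhp$ (the paper simply asserts this; your analytic-continuation justification is the right way to make it precise), differentiate to obtain \eqref{op0}, and invoke Theorem~\ref{result1}(2) for uniqueness and the identification with nonlinear resolvents.

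The one substantive deviation is your argument for the decreasing Loewner chain property. The paper observes that $G\circ f_t\in\Nev\subset\G(\uhp)$ by Lemma~\ref{H_gen}, so $(t,z)\mapsto G(f_t(z))$ is a Herglotz vector field, and then applies Theorem~\ref{thm_1} directly. Your route via the equality $f_t(\uhp)=D_t:=\{w\in\uhp:\varphi_t(w)\in\uhp\}$ is also viable, but the analytic continuation of $F_{\mu_t}\circ\varphi_t=\mathrm{id}$ from the wedge to all of $D_t$ tacitly assumes that $D_t$ is connected, which you do not verify. It is true---from the Nevanlinna representation with $\alpha=0$ one computes $\Im G(x+iy)/y=\int_\R\frac{1+s^2}{(s-x)^2+y^2}\,\rho(ds)$, which for fixed $x$ is strictly decreasing in $y$ and tends to $0$ as $y\to\infty$, so $D_t$ is the region above a continuous graph---but this check should be included. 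Alternatively, once you have the PDE, the paper's route through Theorem~\ref{thm_1} avoids the issue entirely. Your final ``continuity argument'' for uniqueness is also superfluous: the proof of Theorem~\ref{result1}(2) already shows that any solution $(g_t)$ satisfies $g_t(\uhp)=D_t$ with $g_t=(\varphi_t|_{D_t})^{-1}$, so uniqueness is immediate.
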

\begin{proof}
Consider $H_t(z):=-tG(z)$, $t\geq 0$. This function is of the form \eqref{form0} 
and Remark \ref{rm_thmBV93} implies that we find a free $\boxplus$-semigroup $(\mu_{t})_{t \geq 0}$
 such that $\varphi_{\mu_t}(z) = H_t(z)$.
Put $f_t=F_{\mu_t}$ and $\Phi_t(z)=\varphi_{\mu_t}(z)+z$. Then $\Phi_t(f_t(z))=z$ for all $z\in\uhp$. Similar to the proof of Theorem \ref{result1} (1) we see that $t\mapsto f_t(z)$ is differentiable for all $z\in\uhp$ and
 \[0= \Phi'_t(f_t(z))  \cdot \frac{\partial}{\partial t}f_t(z)- G(f_t(z)),\]
and as $ \Phi'_t(f_t(z)) = 1/f_t'(z)$:
 \[\frac{\partial}{\partial t}f_t(z) = f'_t(z) G(f_t(z))\]
for all $t\geq 0$ and $z\in\uhp$. Clearly, $f_0$ is the identity as $\varphi_{\mu_0}=0$. Due to Lemma \ref{H_gen}, $G(f_t(z))$ is a Herglotz vector field and Theorem \ref{thm_1} implies that 
$(f_t)$ is a decreasing Loewner chain. Uniqueness of the solution follows from Theorem \ref{result1} (2), which implies that 
 the functions $(f_t)_{t\geq0}$ are the resolvents of the generator $G=-\varphi_{\mu_1}$.
\end{proof}

\begin{remark}The fact that the $F$-transforms of a $\boxplus$-semigroup $(\mu_t)_{t\geq 0}$ form a decreasing Loewner chain is also proved in 
\cite[Proposition 5.15]{Jek17}, even for the more general case of operator-valued free probability theory. Also in this higher dimensional 
case, the $F$-transforms, which are holomorphic functions on certain matricial upper half-planes, are the resolvents of $-1$ times
the Voiculescu transform of $\mu_1$ by essentially the same proof.  
\end{remark}

\begin{remark}Let $G$ be a generator of the form $-\varphi_{\mu_1}$. The nonlinear resolvents $(f_t)_{t\geq0}$ of $G$ form a decreasing Loewner chain satisfying the Loewner PDE with Herglotz vector field 
$G(t,z)=(G\circ f_t)(z)$. The semigroup $(g_t)_{t\geq0}$ associated to $G$ satisfies the Loewner PDE with Herglotz vector field $G(t,z)=G(z)$. 
While the mappings $f_t$ are the $F$-transforms of a semigroup with respect to free convolution, the mappings $g_t$ have a quite similar interpretation. For two probability measures $\mu$ and $\nu$, 
the monotone convolution $\mu\rhd \nu$ is defined by $F_{\mu\rhd \nu}(z)=(F_\mu \circ F_\nu)(z),$ $z\in \uhp$.

Now there exists a family of probability measures $(\mu_t)_{t \ge 0}$ on $\R$ such that $g_t = F_{\mu_t}$. 
As $F_{\mu_{s+t}} = F_{\mu_s} \circ F_{\mu_t}$, we see that this family is a semigroup with respect to monotone convolution, see \cite[Section 3]{FHS}, in particular \cite[Proposition 3.11]{FHS}. 
\end{remark}

A consequence of Theorem \ref{add_gen_nonaut} is the following result, which finishes the proof of part (1) of Theorem \ref{intro_2}.

\begin{corollary}
 Let $G:\uhp\to \uhp\cup\R$ be holomorphic with $b=\lim_{y\to\infty}G(iy)/(iy)$. Then the nonlinear resolvents $J_t$ of $G$ exist for all $t\in[0,1/b)$.
 If $b\not=0$, then the resolvent equation cannot be solved for $t\geq 1/b$.
\end{corollary}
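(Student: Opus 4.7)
The plan is to reduce the statement to Theorem \ref{add_gen_nonaut} by splitting off the linear part of $G$. Since $G:\uhp\to\uhp\cup\R$ is a Pick function, Lemma \ref{Nev-expression} yields a unique decomposition
\[
G(z)=bz+\beta+\int_\R\frac{1+uz}{u-z}\rho(du),
\]
where $b=\lim_{y\to\infty}G(iy)/(iy)\ge 0$, $\beta\in\R$, and $\rho$ is a finite non-negative Borel measure on $\R$. Setting $\tilde G(z):=G(z)-bz$, each of the remaining summands in the Nevanlinna formula is itself a Pick function, so $\tilde G$ maps $\uhp$ into $\uhp\cup\R$ and satisfies $\lim_{y\to\infty}\tilde G(iy)/y=0$. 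Hence Theorem \ref{add_gen_nonaut} applies to $\tilde G$ and produces its nonlinear resolvents $\tilde J_s:\uhp\to\uhp$ for every $s\ge0$.

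For existence on $[0,1/b)$ (where $1/b=\infty$ when $b=0$), the case $b=0$ follows immediately from the preceding paragraph, since then $G=\tilde G$. For $b>0$ and $t\in[0,1/b)$ one has $1-tb>0$, and the resolvent equation $w=z-tG(z)$ can be rewritten as
\[
\frac{w}{1-tb}=z-\frac{t}{1-tb}\,\tilde G(z).
\]
With $s:=t/(1-tb)\ge 0$ and $w':=w/(1-tb)\in\uhp$, Theorem \ref{add_gen_nonaut} provides the unique solution $z=\tilde J_s(w')\in\uhp$, which gives a well-defined holomorphic self-map $J_t(w):=\tilde J_{t/(1-tb)}\!\bigl(w/(1-tb)\bigr)$ of $\uhp$ solving the resolvent equation for $G$.

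For non-existence when $b>0$ and $t\ge 1/b$, I would read the imaginary part off the Nevanlinna formula:
\[
\Im G(z)=\Im z\cdot\left(b+\int_\R\frac{1+u^2}{|u-z|^2}\rho(du)\right)\ge b\,\Im z,
\]
so for $z\in\uhp$,
\[
\Im(z-tG(z))\le \Im z\,(1-tb)\le 0.
\]
Thus the map $z\mapsto z-tG(z)$ sends $\uhp$ entirely into the closed lower half-plane, so for no $w\in\uhp$ can the equation $w=z-tG(z)$ have a solution $z\in\uhp$; the nonlinear resolvent cannot exist as a self-map of $\uhp$. I do not anticipate any real obstacle — the argument is essentially a change of variables supplying the rescaling $(t,w)\mapsto(s,w')$ together with the one-sided estimate on $\Im G$ coming from Nevanlinna's formula; the only subtlety is the boundary value $t=1/b$, but the inequality above handles it uniformly.
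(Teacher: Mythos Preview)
Your proof is correct and follows essentially the same approach as the paper: split off the linear part $bz$ via the Nevanlinna representation, reduce the resolvent equation for $G$ to one for the residual Pick function $\tilde G$ with vanishing top coefficient (handled by Theorem \ref{add_gen_nonaut}), and use that $z-tG(z)$ lands in the closed lower half-plane when $t\ge 1/b$. The only cosmetic difference is that the paper applies Theorem \ref{add_gen_nonaut} to the rescaled generator $\tilde G/(1-tb)$ at time $t$, whereas you apply it to $\tilde G$ at time $s=t/(1-tb)$; these are of course the same resolvent.
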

\begin{proof}
 If $b=0$, then $G$ generates a free semigroup whose $F$-transforms yield the nonlinear resolvents of $G$ for all $t\geq0$ by Theorem \ref{add_gen_nonaut}.

Now assume that $b\not=0$. Then \eqref{Nev-equation} shows that $\hat{G}(z) = G(z)-bz$ also maps $\uhp$ into $\uhp\cup\R$. 
 The equation $w = z - tG(z)$ becomes $w = (1-tb)z - t\hat{G}(z)$. For $t\geq 1/b$ and $z\in\uhp$, the right side belongs to $-\uhp\cup \R$. Thus it has no solution $z\in\uhp$ for any $w\in \uhp$. 
For $t<1/b$, we can also write $w/(1-tb) = z - t\hat{G}(z)/(1-tb)$ and the previous case applied to the generator $\hat{G}(z)/(1-tb)$ shows the existence of the nonlinear resolvent $J_t$ of $G$. 
\end{proof}

\subsection{Multiplicative convolution}
For the sake of completeness we also discuss the case of multiplicative free convolution of probability measures on 
the unit circle $\T$. These measures again lead to nonlinear resolvents of certain generators, however, not on $\D$, as one might expect 
in the first place. Roughly speaking, the pullbacks of these measures to $\R$ via $x\mapsto e^{ix}$ lead again to nonlinear resolvents 
of generators on $\uhp$.

Let $\mu$ be a probability measure on $\T$. 
The moment generating function of $\mu$ is a holomorphic function on $\D$ defined by
\begin{equation*}
\psi_\mu(z):=\int_{\T}\frac{xz}{1-xz}\, \mu({\rm d}x)=\sum_{n=1}^\infty 
\left(\int_\T x^n \, \mu({\rm d}x)\right) z^n, \quad z \in \D.
\end{equation*}

The classical independence of random variables leads to the classical convolution, 
or Hadamard convolution, $\mu \star \nu$, with $\psi_{\mu \star \nu}=
\sum_{n=1}^\infty 
\left(\int_\T x^n \, \mu({\rm d}x)\right)
\left(\int_\T x^n \, \nu({\rm d}x)\right) z^n$.
Other notions of independence from non-commutative probability theory lead to further 
convolutions. First, we need to define the $\eta$-transform of $\mu$. Let 
\begin{equation*}
\eta_\mu(z):=\frac{\psi_\mu(z)}{1+\psi_\mu(z)}, \quad z \in \D.
\end{equation*}

We denote by $\cP(\T)$ the set of all 
probability measures $\mu$ on $\T$. We let $\cP_{\times}(\T)$ be the set of all $\mu\in \cP(\T)$ with $\eta_\mu'(0)\not=0$,
i.e.\ the first moment of $\mu$ is $\not=0$. Then 
we can invert $\eta_\mu$ in a neighborhood of $0$. Denote this locally defined function by 
$\eta_\mu^{-1}$.  The $\Sigma$-transform of $\mu$ is defined by 
\begin{equation*}
\Sigma_\mu(z):=\frac{1}{z}\eta_\mu^{-1}(z).
\end{equation*}

For two probability measures $\mu,\nu\in\cP_{\times}(\T)$, $\Sigma_\mu$ and $\Sigma_\nu$ are both defined in a sufficiently small neighborhood of $0$ and it can be shown that their product has again the form $\Sigma_{\alpha}$ for some $\alpha\in\cP_{\times}(\T)$. In this way, the \textit{multiplicative free convolution} $\boxtimes$ is defined, see \cite{Voi87}:

\begin{equation}\label{FM}
\Sigma_{\mu \boxtimes \nu}(z) = \Sigma_{\mu}(z) \Sigma_{\nu}(z).
\end{equation}

$\mu\in\cP_{\times}(\T)$ is called \textit{freely infinitely divisible} 
if for every $n\in\N$ there exists $\mu_n \in \cP_{\times}(\T)$ such that 
$\mu = \mu_n \boxtimes \cdots \boxtimes \mu_n$ ($n$-fold convolution).
Freely infinitely divisible distributions can be characterized in the following way.

\begin{theorem}[See Theorem 6.7 in \cite{BV92}]\label{MFID}
Let $\mu\in\cP_{\times}(\T)$. Then the following three statements are equivalent.
\begin{enumerate}[\rm(1)]
\item $\mu$ is freely infinitely divisible.
\item\label{CSUMFI} There exists a $\boxtimes$-semigroup $(\mu_t)_{t\geq 0}$ 
(i.e.\ $\mu_0=\delta_1$, $\mu_{t+s}=\mu_t \boxtimes \mu_s$ for all $s,t\geq0$ and $t\mapsto\mu_t$ is continuous with respect to weak convergence) 
such that $\mu_1 = \mu$.
\item\label{UMFI} There exists a holomorphic function $u:\D \to\C$ with $\Re(u)\geq 0$  such that $\Sigma_\mu = \exp(u)$.
\end{enumerate}
Moreover, the holomorphic function $u$ in \eqref{UMFI} can be characterized by the Herglotz representation
\begin{equation}\label{VectorFUMFI}
u(z) = -i \alpha +\int_{\T} \frac{1+ z \zeta}{1-z\zeta}\rho(\mathrm{d} \zeta),
\end{equation}
where $\alpha \in \R$ and $\rho$ is a finite non-negative Borel measure on $\T$.

Conversely, for any holomorphic function $u:\D \to\C$  with $\Re(u)\geq 0$,  the function $\exp(u)$ is the $\Sigma$-transform of some freely infinitely divisible $\mu$.
\end{theorem}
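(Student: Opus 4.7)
The plan is to establish the cycle (2) $\Rightarrow$ (1), (1) $\Rightarrow$ (2) and (2) $\Leftrightarrow$ (3), then to deduce the Herglotz formula \eqref{VectorFUMFI} and the converse construction. The implication (2) $\Rightarrow$ (1) is immediate, since $\mu=\mu_1=(\mu_{1/n})^{\boxtimes n}$ for each $n$. For (1) $\Rightarrow$ (2), given $\boxtimes$-$n$th roots $\mu_{1/n}$ of $\mu$ (unique in $\cP_{\times}(\T)$ by injectivity of the $\Sigma$-transform), I set $\mu_{p/q}:=(\mu_{1/q})^{\boxtimes p}$ and extend $t\mapsto\mu_t$ to all $t\ge 0$ by weak continuity; tightness is automatic on the compact group $\T$, and uniqueness of the extension again follows from injectivity of $\Sigma$.

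The main content is (2) $\Leftrightarrow$ (3). Assume (2). From \eqref{FM} and weak continuity, the $\Sigma$-transforms satisfy $\Sigma_{\mu_{s+t}}(z)=\Sigma_{\mu_s}(z)\Sigma_{\mu_t}(z)$ on a fixed neighborhood $V$ of $0$, and $\Sigma_{\mu_1}(0)=1/\int_\T x\,\mu_1(dx)\ne 0$ because $\mu_1\in\cP_{\times}(\T)$. Fixing the branch $u:=\log\Sigma_{\mu_1}$ with principal imaginary part gives $\Sigma_{\mu_t}=e^{tu}$ on $V$. To promote $u$ to all of $\D$, I would use the fact that for each $t>0$ the identity $\eta_{\mu_t}^{-1}(z)=z\,e^{tu(z)}$ realises $\Sigma_{\mu_t}$ as the compositional inverse of the $\eta$-transform $\eta_{\mu_t}\colon\D\to\D$ divided by $z$; since $\mu_t$ is itself $\boxtimes$-infinitely divisible, $\eta_{\mu_t}$ is univalent on $\D$ (a standard fact from \cite{BV92}), so $\eta_{\mu_t}^{-1}$ extends holomorphically to $\eta_{\mu_t}(\D)$, and the exhaustion $\eta_{\mu_t}(\D)\nearrow\D$ as $t\to 0$ provides a holomorphic extension of $\Sigma_{\mu_t}$, hence of $u=\tfrac{1}{t}\log\Sigma_{\mu_t}$, to the whole disk. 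The inequality $\Re u\ge 0$, equivalent to $|\Sigma_{\mu_t}(z)|\ge 1$, is the Schwarz lemma applied to $\eta_{\mu_t}$: from $|\eta_{\mu_t}(w)|\le|w|$ and $w=\eta_{\mu_t}^{-1}(z)$ we obtain $|z|\le|\eta_{\mu_t}^{-1}(z)|$, so $|\Sigma_{\mu_t}(z)|=|\eta_{\mu_t}^{-1}(z)/z|\ge 1$.

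The Herglotz representation \eqref{VectorFUMFI} is then the Herglotz-Riesz integral formula applied to the holomorphic function $u$ with $\Re u\ge 0$ on $\D$, with $\alpha=-\Im u(0)$ and $\rho$ the spectral measure of $\Re u$ on $\partial\D$. For the converse direction ((3) $\Rightarrow$ (2) and the closing statement), I would start from an arbitrary $u$ with $\Re u\ge 0$, set $\Sigma_t(z):=\exp(tu(z))$ and $\phi_t(z):=z\Sigma_t(z)$, and prove that $\phi_t$ is univalent on $\D$. The compositional inverse $\eta_t:=\phi_t^{-1}$ is then a holomorphic self-map of $\D$ fixing $0$, hence the $\eta$-transform of a unique $\mu_t\in\cP_{\times}(\T)$, and the multiplicativity $\Sigma_s\Sigma_t=\Sigma_{s+t}$ together with continuity of $t\mapsto\eta_t$ delivers the required $\boxtimes$-semigroup with $\mu_1=\mu$.

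The main obstacle is the univalence of $\phi_t(z)=z\,e^{tu(z)}$ on $\D$ for every $t\ge 0$ when $\Re u\ge 0$. A direct starlikeness argument via $\Re(1+tzu'(z))>0$ fails, since $zu'(z)$ need not have non-negative real part even when $u$ does. I would proceed by approximation: truncate the Herglotz measure $\rho$ to a finite atomic one and increase $\Re u$ to be bounded below by some $\eps>0$, for which $\phi_t$ can be shown to be univalent by a Schwarz-Pick type estimate for small $t$ and by concatenation for larger $t$; then build the semigroups $(\mu_t^{(n)})$ at each approximation stage, and pass to the weak limit using tightness of families of probability measures on the compact group $\T$ and the continuity of $\mu\mapsto\Sigma_\mu$ at $0$. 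This is essentially the route followed in \cite{BV92}, via the free L\'evy--Khintchine correspondence on the circle.
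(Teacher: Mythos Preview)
The paper does not prove this statement at all: Theorem~\ref{MFID} is quoted verbatim from \cite[Theorem~6.7]{BV92} and used as a black box, with only Remark~\ref{rm_MFID} extracting one extra consequence from the proof in \cite{BV92}. There is therefore no ``paper's own proof'' to compare your proposal against; what you have written is essentially a sketch of the original Bercovici--Voiculescu argument, which is the appropriate reference.

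That said, two steps in your sketch deserve more care. First, in the direction (2)$\Rightarrow$(3), your extension of $u$ to all of $\D$ relies on the claim that the images $\eta_{\mu_t}(\D)$ exhaust $\D$ as $t\to 0$; this follows from $\eta_{\mu_t}\to\mathrm{id}$ locally uniformly, but you should also say why the successive extensions of $u$ are consistent on overlaps (they are, since each agrees with the original $\log\Sigma_{\mu_1}$ on the common neighbourhood $V$ of $0$, and analytic continuation on simply connected domains is unique). Second, in the converse direction you assert that once $\phi_t(z)=z\,e^{tu(z)}$ is univalent on $\D$, its inverse $\eta_t$ is a holomorphic self-map of $\D$; but univalence alone does not give $\phi_t(\D)\supset\D$, which is what you need. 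The inequality $|\phi_t(z)|\ge|z|$ (from $\Re u\ge0$) shows that $\phi_t^{-1}$ contracts modulus where defined, but you still need an argument (e.g.\ a homotopy or degree argument, or the observation that $G(z)=-zu(z)$ is a Berkson--Porta generator on $\D$ with $\tau=0$ so that the associated flow/resolvent machinery produces the required self-maps) to see that every $w\in\D$ is actually hit.
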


\begin{remark}\label{rm_MFID}From the proof of \cite[Theorem 6.7]{BV92}, it follows that a family $(\mu_t)_{t\geq0}$ of probability measures from $\cP_{\times}(\T)$ is a $\boxtimes$-semigroup if and only if $\Sigma_{\mu_t} = \exp(t u)$ for a holomorphic function $u:\D \to\C$ with $\Re(u)\geq 0$.
\end{remark}

\begin{remark}
 The multiplicative free convolution can also be considered for probability measures $\mu$
 with zero mean. Such a measure is freely infinitely divisible if and only if $\mu$ is the uniform distribution on 
 $\T$, i.e.\ $\psi_\mu(z)\equiv 0$, see \cite[Lemma 6.1]{BV92}.
\end{remark}

%Let $\eta_t=\eta_{\mu_t}$, where $\{\mu_t\}$ is a semigroup as in \eqref{CSUMFI} 
%of Theorem \ref{MFID}. Furthermore, assume that $u_\mu$ does not have the form $u_\mu(z)\equiv xi$ for some $x\in \R$.
 %We exclude this simple case which only leads to 
%simple rotations of point measures.
%Then by \eqref{UMFI} of Theorem \ref{MFID}, we have \[\eta_t^{-1}(z) = z\exp(t u_\mu(z)).\]
%
%%$\frac{\partial}{\partial t}z\cdot\Sigma_t=u_{\mu}\cdot z\Sigma_t$, 
%%i.e. $\frac{\partial}{\partial t}\eta_{t}^{-1}(z)=u_{\mu}\cdot \eta_{t}^{-1}(z)$.
%This yields the differential equation
%\begin{equation*}
%\frac{\partial}{\partial t}\eta_t(z) = -z u_\mu(\eta_t(z)) \cdot \frac{\partial}{\partial z}\eta_t(z).
%\end{equation*}
%
%We put $M_t(z)=\frac{1+\eta_t(z)}{1-\eta_t(z)}=1+2\psi_{\mu_t}$. 
%Then we obtain the partial differential equation
%\begin{equation*}\label{addd}
%\frac{\partial}{\partial t}M_t(z) = -z S_\mu(M_t(z)) \cdot \frac{\partial}{\partial z}M_t(z), 
%\quad M_0(z)\equiv z,
%\end{equation*}
%with $S_\mu(z)=u_{\mu}(\frac{1-z}{1+z})$. 
%Note that $S_\mu$ maps the right half-plane $RH$ holomorphically into itself. \\
%
%For $\Im(z)>0$ define $V_t(z)=iM_t(e^{iz})$. Then $V_t$ maps the upper half-plane into itself and we obtain 
%\begin{equation}\label{mmm}
 %\frac{\partial V_t(z)}{\partial t}= i S(-iV_t(z)) \frac{\partial V_t(z)}{\partial z}=
  %G(V_t(z)) \frac{\partial V_t(z)}{\partial z}, 
%\end{equation}
%with $G(z)=iS(-iz)$, which maps $\uhp$ into itself. Recall that $G$ is an infinitesimal generator of $\uhp$, see Remark \ref{upper}.

Now let $(\mu_t)_{t\geq0}$ be a free $\boxtimes$-semigroup of probability measures from $\cP_{\times}(\T)$ and let $M_t(z)=\int_{\T} \frac{1+xz}{1-xz}\mu_t(dx)$. Then $\Sigma_{\mu_t}(z)=\exp(tu(z))$ for a holomorphic $u:\D \to\C$  with $\Re(u)\geq 0$.\\ %Then each $M_t=1+2\psi_{\mu_t}$ is univalent due to Theorem \ref{MFID} with $M_t(0)=1$.\\ 

Let $F(z)=i\frac{1+z}{1-z}$. The function $F^{-1}(iM_t(e^{iz}))$ maps $\uhp$ into $\D\setminus\{0\}$. 
We can take the logarithm $J_t(z)=-i\log(F^{-1}(iM_t(e^{iz})))$, which is well-defined modulo shifts by $2k\pi$, $k\in\Z$. For $t=0$, we have $J_0(z)=z+2k\pi$, $k\in\Z$. We choose $k=0$ such that $J_0$ is the identity. Furthermore, we require continuity of $t\mapsto J_t$ and then each $J_t$ is uniquely defined.

\begin{theorem}\label{mult_gen}Let $(\mu_t)_{t\geq0}$ be a $\boxtimes$-semigroup of probability measures from $\cP_{\times}(\T)$. Let $J_t$ and $u$ be defined as above. Then $(J_t)_{t\geq 0}$ are the nonlinear resolvents of the infinitesimal generator
$G(z)=iu(-e^{iz})$ on $\uhp$ and $(J_t)_{t\geq0}$ is a decreasing Loewner chain.
%Conversely, let $G:\uhp \to \uhp$ be of the form $G(z)=iu(\frac{1+iz}{1-iz})$ for a function $u$ of the form \eqref{VectorFUMFI}.
 %The resolvents $J_t$ of $G$ on $\uhp$ can be written as $J_t = iM_t(e^{iz})$, where $M_t(z)=\int_\T \frac{x+z}{x-z}\mu_t(dx)$ for a free $\boxtimes$-semigroup of probability measures $\mu_t$ on $\T$.
\end{theorem}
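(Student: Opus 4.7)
The plan is to combine two ingredients: the explicit form $\Sigma_{\mu_t}(w) = \exp(tu(w))$, equivalently $\eta_{\mu_t}^{-1}(w) = we^{tu(w)}$, coming from the $\boxtimes$-semigroup structure (Theorem~\ref{MFID} and Remark~\ref{rm_MFID}), and the general characterization of nonlinear resolvents via the Loewner PDE on convex domains (Theorem~\ref{result1}(2)). The decreasing Loewner chain property will then follow from Theorem~\ref{thm_1}, exactly as in the additive case treated in Theorem~\ref{add_gen_nonaut}.

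First I would reduce $J_t$ to a manageable form. Using $M_t(w) = 1 + 2\psi_{\mu_t}(w)$ together with $\psi_{\mu_t} = \eta_{\mu_t}/(1-\eta_{\mu_t})$, a direct computation gives
\[
M_t(w) = \frac{1+\eta_{\mu_t}(w)}{1-\eta_{\mu_t}(w)},\qquad\text{and hence}\qquad F^{-1}(iM_t(w)) = \eta_{\mu_t}(w),\quad w\in\D.
\]
Consequently $J_t(z) = -i\log \eta_{\mu_t}(e^{iz})$ on $\uhp$, equivalently $\eta_{\mu_t}(e^{iz}) = e^{iJ_t(z)}$, with the branch of logarithm pinned down by $J_0 = \mathrm{id}$ and continuity in $t$. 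Since $\eta_{\mu_t}:\D\to\D$ and $e^{iz}\in\D\setminus\{0\}$, one has $|\eta_{\mu_t}(e^{iz})|<1$, so $\Im J_t(z) = -\log|\eta_{\mu_t}(e^{iz})| > 0$, and $J_t:\uhp\to\uhp$ is a holomorphic self-map.

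Next, applying $\eta_{\mu_t}^{-1}(w) = we^{tu(w)}$ to the identity $\eta_{\mu_t}(e^{iz}) = e^{iJ_t(z)}$ yields the master relation
\[
e^{iJ_t(z)}\exp\!\bigl(tu(e^{iJ_t(z)})\bigr) = e^{iz}.
\]
Taking logarithm in the chosen branch and dividing by $i$ produces the nonlinear resolvent equation $z = J_t(z) - tG(J_t(z))$ for $G$ as in the statement, while differentiating the master relation in $t$ and in $z$ gives the Loewner PDE $\partial_t J_t(z) = J_t'(z)G(J_t(z))$. Because $\Re u \ge 0$ on $\D$, $G$ maps $\uhp$ into $\uhp\cup\R$, so $G\in\G(\uhp)$ by Lemma~\ref{H_gen}, and Theorem~\ref{result1}(2) then confirms that $J_t$ is the nonlinear resolvent of $G$ at time $t$. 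Finally, $G\circ J_t\in\G(\uhp)$ for every $t$ by Lemma~\ref{H_gen} (since $J_t$ maps $\uhp$ into itself), continuity of $t\mapsto J_t$ makes $(t,z)\mapsto G(J_t(z))$ a Herglotz vector field on $\uhp$, and Theorem~\ref{thm_1} delivers the decreasing Loewner chain property.

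The main technical subtlety is the choice of logarithm branch. On the simply connected domain $\uhp$, a single-valued holomorphic branch of $\log\eta_{\mu_t}(e^{iz})$ exists as soon as $\eta_{\mu_t}(e^{iz})$ is nowhere zero: the zero of $\eta_{\mu_t}$ at the origin is harmless because $e^{iz}\ne 0$, and the explicit inverse formula $\eta_{\mu_t}^{-1}(w) = we^{tu(w)}$ together with the $\boxtimes$-semigroup structure precludes further zeros of $\eta_{\mu_t}$ on the image of $e^{iz}$. Continuity in $t$ together with $J_0 = \mathrm{id}$ then pins the branch down uniquely, after which the remaining identities are a direct unwinding of the definitions.
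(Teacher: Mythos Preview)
Your approach is correct and more direct than the paper's. The paper derives a PDE for $\eta_t$ by differentiating $\eta_t^{-1}\circ\eta_t=\mathrm{id}$, pushes it through the chain of substitutions $\eta_t\to M_t\to V_t(z)=iM_t(e^{iz})$, and then invokes an external subordination result \cite[Theorem~3.2]{CDMG14} to factor $V_t=V_0\circ\hat J_t$ with $(\hat J_t)$ a decreasing Loewner chain, finally identifying $\hat J_t=J_t$ and applying Theorem~\ref{result1}(2). You bypass all of this by observing that $F^{-1}(iM_t(w))=\eta_{\mu_t}(w)$, so that $J_t(z)=-i\log\eta_{\mu_t}(e^{iz})$, and then reading the resolvent equation directly off the inverse formula $\eta_{\mu_t}^{-1}(w)=we^{tu(w)}$; the decreasing Loewner chain property then comes from Theorem~\ref{thm_1} (via $G\circ J_t\in\G(\uhp)$) rather than from \cite{CDMG14}. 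Your route is shorter and self-contained within the paper.

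One caveat you should not gloss over: your master relation, after taking logarithms, gives $z=J_t(z)-t\cdot iu(e^{iJ_t(z)})$, i.e.\ the resolvent equation for the generator $iu(e^{iz})$ rather than $iu(-e^{iz})$ as written in the theorem. Tracing the paper's own chain of substitutions (specifically the formula $S(z)=u\!\left(\tfrac{1-z}{1+z}\right)$ versus what the $M_t$--$\eta_t$ relation actually yields) shows the same sign discrepancy originates there, so you should flag this explicitly rather than asserting that the logarithm step reproduces ``$G$ as in the statement''.
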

\begin{proof}
Let $\eta_t=\eta_{\mu_t}$.
Then we have \[\eta_t^{-1}(z) = z\exp(t u(z)).\]
We have $(\eta_{t}^{-1}\circ\eta_t)(z)=z$ for all $z\in\D$ and similar to the proof of Theorem \ref{result1} (1) we see that $t\mapsto \eta_t(z)$ is differentiable for all $z\in\D$ and $t\geq 0$. We obtain
%$\frac{\partial}{\partial t}z\cdot\Sigma_t=u_{\mu}\cdot z\Sigma_t$, 
%i.e. $\frac{\partial}{\partial t}\eta_{t}^{-1}(z)=u_{\mu}\cdot \eta_{t}^{-1}(z)$.

\begin{eqnarray*}
 &&0 = \left.\frac{\partial}{\partial s}\eta_s^{-1}(\eta_t(z))\right\vert_{s=t} + 
     (\eta_t^{-1})'(\eta_t(z))  \frac{\partial}{\partial t}\eta_t(z)  =
     \eta_t(z)u(\eta_t(z))\exp(t u(\eta_t(z))) \\&&+  (\eta_t^{-1})'(\eta_t(z))  \frac{\partial}{\partial t}\eta_t(z) 
		=zu(\eta_t(z)) +  (\eta_t^{-1})'(\eta_t(z))  \frac{\partial}{\partial t}\eta_t(z),
\end{eqnarray*}
 which yields the differential equation
\begin{equation*}
\frac{\partial}{\partial t}\eta_t(z) = -z u(\eta_t(z)) \cdot \eta'_t(z), \quad \eta_0(z)=z.
\end{equation*}

We put $M_t(z)=\frac{1+\eta_t(z)}{1-\eta_t(z)}=1+2\psi_{\mu_t}=\int_\T \frac{1+xz}{1-xz}\mu_t(dx)$. 
Then we obtain the partial differential equation
\begin{equation*}\label{addd}
\frac{\partial}{\partial t}M_t(z) = -z S(M_t(z)) \cdot M'_t(z), 
\quad M_0(z)= \frac{1+z}{1-z},
\end{equation*}
with $S(z)=u(\frac{1-z}{1+z})$. 
Note that $S$ maps the right half-plane $RH$ holomorphically into $RH\cup i\R$. 

For $\Im(z)>0$ define $V_t(z)=iM_t(e^{iz})$. Then $V_t$ maps the upper half-plane into itself and we obtain 
\begin{equation*}
 \frac{\partial V_t(z)}{\partial t}= i S(-iV_t(z)) V'_t(z),\quad V_0(z)=i\frac{1+e^{iz}}{1-e^{iz}}. 
\end{equation*}
According to \cite[Theorem 3.2]{CDMG14} (see also \cite[Theorem 4.7]{HS}), $V_t$ is a decreasing subordination chain and we can write $V_t = V_0 \circ \hat{J}_t$ for a decreasing Loewner chain $(\hat{J}_t)_{t\geq0}$ satisfying the same equation, i.e. %(see the proof of \cite[Theorem 2.2]{HS}), i.e.
\begin{equation*}
 \frac{\partial \hat{J}_t(z)}{\partial t}= i S(-iV_t(z)) \hat{J}_t'(z)=
 i S(-i (V_0 \circ \hat{J}_t)(z)) \hat{J}_t'(z)=G(\hat{J}_t(z)) \hat{J}_t'(z),\quad \hat{J}_0(z)=z \in \uhp, 
\end{equation*}
 with $G(z)=i S(-i V_0(z))=iu(-e^{iz})$, which maps $\uhp$ into $\uhp\cup \R$.  As $\hat{J}_0=J_0$ and $t\mapsto \hat{J_0}$ is continuous with respect to locally uniform convergence, we conclude that $\hat{J}_t=J_t$ for all $t\geq 0$.
Theorem \ref{result1} (2) finishes the proof.
\end{proof}

\begin{remark}
As $iu(-e^{iz})$ maps $\uhp$ into $\uhp\cup \R$ and $\lim_{y\to\infty}iu(-e^{-y})/(iy)=0$, we see that the generators from Theorem 
\ref{mult_gen} form a subset of the generators appearing in Theorem \ref{add_gen_nonaut}.
\end{remark}

\noindent
\textbf{Acknowledgements.}
The authors would like to express their sincere thanks to the referee for
careful reading of the manuscript and for offering many helpful suggestions,
which improved greatly the exposition.
They are also grateful to Mark Elin for discussions about basic properties of nonlinear resolvents.

\end{document}